\definecolor{linkcolor}{HTML}{e88d67} 
\newtheorem{theorem}{Theorem}[section]
\newtheorem{lemma}[theorem]{Lemma}
\newtheorem{definition}[theorem]{Definition}
\newtheorem{proposition}[theorem]{Proposition}
\newtheorem{claim}{Claim}
\newtheorem{remark}{Remark}
\newcommand{\RR}{\mathbb{R}}
\newcommand{\ZZ}{\mathbb{Z}}
\newcommand{\pp}{\partial}
\newcommand{\cA}{\mathcal A}
\newcommand{\hg}{H_{N}^{\max}}
\newcommand{\GT}{\Gamma}
\newcommand{\OO}{\Omega}
\newcommand{\GB}{(\Gamma,\kappa)}
\newcommand{\hn}{H_N}
\newcommand{\avhn}{\overline{H}_N}
\DeclarePairedDelimiter\floor{\lfloor}{\rfloor}
\newcommand\restr[2]{{% we make the whole thing an ordinary symbol
  \left.\kern-\nulldelimiterspace % automatically resize the bar with \right
  #1 % the function
  \vphantom{\big|} % pretend it's a little taller at normal size
  \right|_{#2} % this is the delimiter
  }}
\newcommand{\bb}{\begin{equation}}
\newcommand{\ee}{\end{equation}}
\newcommand{\norm}[1]{\left\lVert#1\right\rVert_{\infty}^{\Omega}}
\newcommand{\norma}[1]{\left\lVert#1\right\rVert_{\infty}}
\begin{document}

\title{A variational principle for domino tilings of multiply-connected domains}

\author[Nikolai~Kuchumov]{ \ Nikolai~Kuchumov$^\star$}

%\excise{\keywords{}}

\thanks{\thinspace ${\hspace{-.45ex}}^\star$ Sorbonne Universit\'e, CNRS, Laboratoire de Probabilit\'es Statistique et Modelisation,
LPSM, UMR 8001, F-75005 Paris, France.
\hskip.06cm
Email:
\hskip.06cm
\texttt{kuchumov@lpsm.paris}}

\vskip.2cm

\begin{abstract}
We study uniformly random domino tilings of a multiply-connected domain with a multivalued height function according to the usual definition. We consider it as a function on the universal covering space of the domain that makes it a well-defined function. It allows us to prove that as the domain grows, normalized height
functions converge in probability to a deterministic continuous function in two regimes. The first regime covers all domino tilings of the domain, for which we also prove a convergence in probability of the height change. In the second regime, we consider domino tilings with a given height change $R_N$ that converges to a fixed asymptotic height change $r$. 
\end{abstract}

\maketitle

\vskip.25cm
\section{Introduction}

Counting the number of domino tilings of a region such as the chess board is a classical problem in combinatorics. However, how does a \textit{typical domino tiling of a given region look like}? The first attempt answering this question for  a simply-connected region is given by the variational principle in \cite{CKP}.~The authors encode a domino tiling $D$ by the \textit{height function} $H_D$, and show that, after appropriate scaling, the height function of a typical domino tiling is close to a solution of the variational problem. However, for a multiply-conneted region this description typically fails, and $H_D$ is a multivalued function.
The goal of this paper is to define the height function on a multiply-connected domain, and extend the variational principle for domino tilings of a multiply-connected domain.

\begin{figure}[h!]
    \begin{minipage}[t]{0.30\textwidth}
    \includegraphics[width=0.6\linewidth]{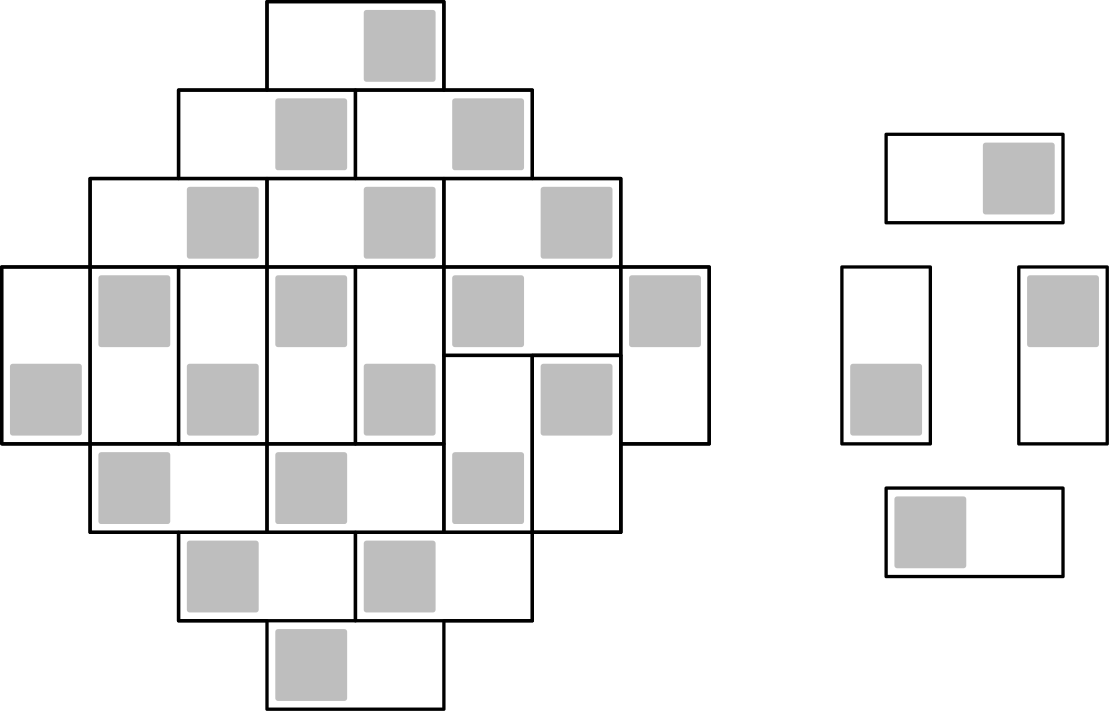}
    \end{minipage}
    \begin{minipage}[t]{0.35\textwidth}
    \raggedleft     \raggedleft
    \includegraphics[width=0.6\linewidth]{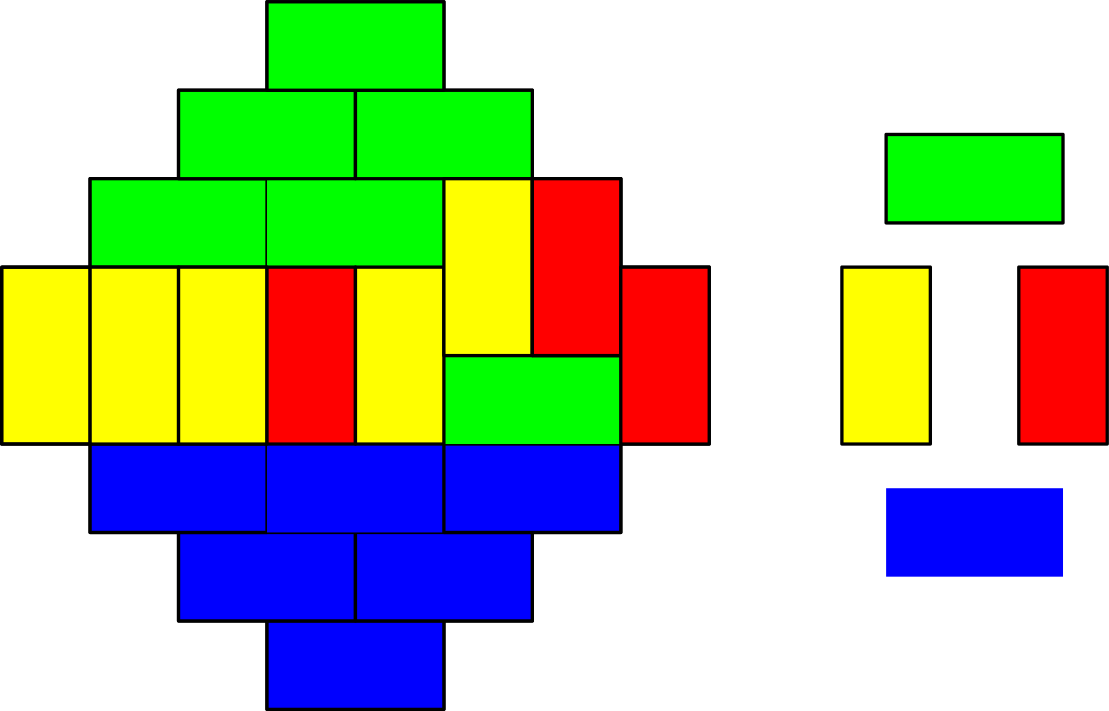}
    \end{minipage}
    \caption{The same domino tiling of Aztec diamond $AD_4$ with two graphical representations and four types of dominoes w.r.t. their orientation.}
    \label{azteccolor}
\end{figure}

Historically, one of the first such theorems regarding the law of large numbers for~geometrical objects coming from combinatorics is the famous Vershik--Kerov--Logan--Shepp limit shape for the Young diagrams \cite{VK,LS}.~After that, there began a widespread development in the area starting with the Arctic Circle Theorem in \cite{JPS}.~The authors analyzed typical domino tilings of a growing sequence of lattice domains, so-called Aztec diamonds $AD_N$(see \hyperref[azteccolor]{Figure \ref{azteccolor}}).~It was shown that for large $N$, a uniformly random domino tiling of $AD_N$ forms two kinds of regions separated by the circle inscribed to the normalized domain, the unit square rotated by $45^{\circ}$.~Statistics of dominoes in the inner region remains non-trivial in the limit as $N\to\infty$, all four types of dominoes w.r.t. their orientation have a positive density. However, regions outside the circle exhibit a deterministic statistics, each of them is tiled by a fixed type of domino.

Soon after,~H.Cohn, R.Kenyon and J.Propp proved that this phenomenon holds for a generic simply-connected domain in \cite{CKP}.~Let us give a glance on their main theorem with necessary details. 

Let $\Gamma\subset\ZZ^2$ be a finite, connected region with a fixed chessboard coloring, and viewed as a subset of $\RR^2$ with set of vertices $V(\Gamma)$. A domino tiling of $\Gamma$ is a covering of it without gaps or overlaps with dominoes (i.e., $1\times 2$ or $2\times 1$ rectangles)
We call a region $\Gamma$ tileable if it admits a domino tiling. Equip the set of domino tilings of $\Gamma$ with the uniform measure $\mathbb{P}^{\Gamma}$.
Then, define the boundary of $\Gamma$, $\pp\Gamma:=\{ p\in\Gamma| p\sim \ZZ^2\setminus\Gamma \}$, where $\sim$ means graph adjacency on $\ZZ^2$.

The main technical tool used in \cite{CKP} is an encoding of a domino tiling $D$ by a height function, $H^{\Gamma}_D: V(\Gamma)\to\ZZ$, defined on vertices of $\Gamma$ fixed by $H_{D}^{\Gamma}(0)=0$, as the integral of a natural flow associated to the domino tiling $D$, for details see \hyperref[defh]{ Definition \ref{defh}}. 

They consider a sequence of lattice domains $\Gamma_N\subset\frac{1}{N}\ZZ^2$ that approximates a connected, simply-connected compact set $\Omega\subset\RR^2$. Furthermore, it is assumed that the boundary condition $B_N:=\restr{H_D}{\pp\Gamma_N}$ after normalization by $N^{-1}$ converges to a deterministic continuous Lipschitz function~$\mathfrak{b}: \pp\Omega\to\RR$.

Let $\mathfrak{h}^{\star}$ be the unique maximizer over space of Lipschitz functions with boundary condition $\mathfrak{b}$ of \textit{the surface tension functional} $\mathcal{F}: h\mapsto\iint_{\Omega}\sigma(\pp_x h,\pp_y h)dxdy$, which we postpone defining until \hyperref[Surf]{Section \ref{Surf}}.~The main theorem of \cite{CKP} states that the normalized logarithm of the number of domino tilings of $\Gamma_N$ with a boundary condition $B_N$, $Z(\Gamma_N,B_N)$, has the following asymptotic behavior as $N\to\infty$.

\begin{theorem}[Cohn Kenyon Propp]
\bb
N^{-2}\log{Z(\Gamma_N,B_N)}\xrightarrow{N\to\infty} \mathcal{F}\left( \mathfrak{h}^{\star}\right).
\ee 
Furthermore, the normalized height functions $\frac{1}{N} H_D$ converge uniformly in probability to $\mathfrak{h}^{\star}$ as $N\to\infty$.
\label{orig}
\end{theorem}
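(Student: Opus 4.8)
The plan is to establish this as a large-deviation statement: one decouples the global enumeration into essentially independent local problems at a mesoscopic scale, on which the exponential counting rate is by construction the surface tension $\sigma$, and then optimizes over profiles. First I would record a compactness fact: across any edge of $\Gamma_N$ a height function changes by a bounded amount, so each normalized height function $\tfrac1N H_D$ is the restriction to $\Gamma_N$ of a Lipschitz function on $\Omega$ with a constant independent of $N$ and $D$, whose discrete gradients lie in the Newton polygon $\mathcal N$ on which $\sigma$ is defined; by Arzelà--Ascoli the set $\mathcal L_{\mathfrak b}$ of such admissible Lipschitz functions on $\Omega$ with boundary value $\mathfrak b$ is compact in $C(\Omega)$, and $\mathfrak h^\star\in\mathcal L_{\mathfrak b}$. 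From Section~\ref{Surf} I would take that $\sigma$ is concave and continuous on $\mathcal N$, together with the basic local count: for a box of side $\ell N$ in the lattice, the number of domino tilings extending a normalized boundary profile of mean slope $s\in\mathcal N$ is at most $\exp\!\big(\ell^2N^2\sigma(s)+o(N^2)\big)$, with a matching lower bound when the profile is (close to) the affine function of slope $s$.

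For the upper bound I would run a covering argument: fix a small $\delta>0$, cover $\Omega$ by a grid of $\delta$-squares $Q_i$, cut $\Gamma_N$ along their boundaries, and use that height functions restrict, so that $Z(\Gamma_N,B_N)\le\sum_{(\phi_i)}\prod_i Z(Q_i\cap\Gamma_N,\phi_i)$, the sum over realizable boundary tuples $(\phi_i)$, of which there are only $\exp(o(N^2))$ for fixed $\delta$. By the local count each product is at most $\exp\!\big(N^2\sum_i\delta^2\sigma(s_i)+o(N^2)\big)$, where $s_i$ is the mean boundary slope on $Q_i$; and since the $\phi_i$ descend from one Lipschitz function $g\in\mathcal L_{\mathfrak b}$ (the extension of $\tfrac1N H_D$), the $s_i$ are cell-averages of $\nabla g$, so by concavity of $\sigma$ and a comparison with a suitable piecewise-affine element of $\mathcal L_{\mathfrak b}$ built from $g$ one gets $\sum_i\delta^2\sigma(s_i)\le\mathcal F(\mathfrak h^\star)+o_\delta(1)$; cells meeting $\partial\Omega$ cover area $O(\delta)$ and the hypothesis $\tfrac1N B_N\to\mathfrak b$ controls their contribution. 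Letting $\delta\to0$ gives $\limsup_N N^{-2}\log Z(\Gamma_N,B_N)\le\mathcal F(\mathfrak h^\star)$.

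For the matching lower bound I would approximate $\mathfrak h^\star$ by a continuous $\tilde h\in\mathcal L_{\mathfrak b}$ that is affine on each cell of a fine triangulation and agrees with $\mathfrak h^\star$ at the vertices, so that $\tilde h\to\mathfrak h^\star$ uniformly and, $\sigma$ being continuous, $\mathcal F(\tilde h)\to\mathcal F(\mathfrak h^\star)$ as the triangulation refines. Prescribing the internal boundary heights along the cell edges to be the (rounded) affine function $N\tilde h$, the lower half of the local count applied cell by cell produces at least $\exp\!\big(N^2\mathcal F(\tilde h)+o(N^2)\big)$ tilings of $\Gamma_N$ with boundary value $B_N$; the point requiring care is that the rounded affine function need not itself be a legitimate height function, so one inserts thin transition strips along the cell edges in which the height is realized ``by hand'' within $o(N)$ of $N\tilde h$, at a cost of only $\exp(o(N^2))$ in the count and $o(1)$ in $\mathcal F$, using $\tfrac1N B_N\to\mathfrak b$ to close up along $\partial\Omega$. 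Combined with the upper bound this gives the partition-function asymptotics.

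The convergence in probability then follows by a soft argument: $\mathcal F$ is upper semicontinuous on the compact set $\mathcal L_{\mathfrak b}$ with \emph{unique} maximizer $\mathfrak h^\star$, so for every $\epsilon>0$ there is $c(\epsilon)>0$ with $\mathcal F(h)\le\mathcal F(\mathfrak h^\star)-c(\epsilon)$ whenever $\|h-\mathfrak h^\star\|_\infty\ge\epsilon$. Re-running the upper-bound estimate over only those tilings with $\|\tfrac1N H_D-\mathfrak h^\star\|_\infty\ge\epsilon$ bounds their number by $\exp\!\big(N^2(\mathcal F(\mathfrak h^\star)-c(\epsilon)/2)\big)$ for $N$ large, and dividing by $Z(\Gamma_N,B_N)\ge\exp\!\big(N^2(\mathcal F(\mathfrak h^\star)-c(\epsilon)/4)\big)$ yields $\mathbb P^{\Gamma_N}\big(\|\tfrac1N H_D-\mathfrak h^\star\|_\infty\ge\epsilon\big)\le\exp(-c(\epsilon)N^2/4)\to0$. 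I expect the main obstacles to be, first, the input from Section~\ref{Surf} — the existence, concavity and continuity of $\sigma$ up to $\partial\mathcal N$ and the sharp local count, which rest on Kasteleyn/Kenyon exact evaluations of torus partition functions together with subadditivity — and, second, two pieces of bookkeeping in the variational step: verifying that the box-wise entropy bounds add up to exactly the variational maximum $\mathcal F(\mathfrak h^\star)$ and no more, and the patching construction in the lower bound, where one must manufacture a genuine tiling from the cell-by-cell pieces while also handling the frozen slopes on $\partial\mathcal N$, at which the local count degenerates.
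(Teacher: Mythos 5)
This statement is quoted from \cite{CKP}; the paper does not reprove it, but its proof of the multiply-connected generalization (Sections 5--8) is the natural point of comparison, and your skeleton for the partition-function asymptotics coincides with it: cut the domain along a mesoscopic mesh (the paper's cutting rule (\ref{Cutting_boundary})), apply the local surface-tension count on each cell (Corollary 4.2 of \cite{CKP}), use concavity of $\sigma$ plus a piecewise-affine comparison profile for the upper bound, and a piecewise-linear approximation of the maximizer with patched boundary data for the lower bound. The genuine divergence is in how the convergence in probability is obtained. You derive it purely from the entropy estimates: cover the compact set $\{h:\|h-\mathfrak h^{\star}\|_\infty\ge\epsilon\}$ by finitely many $\delta$-balls, bound the count in each by $\exp(N^2(\mathcal F(h)+o_\delta(1)))$, and use uniqueness plus upper semicontinuity of $\mathcal F$ to get a speed-$N^2$ large-deviation bound $\exp(-c(\epsilon)N^2)$. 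The paper instead first proves a concentration inequality for $H_N$ around its \emph{mean} via the martingale/Azuma argument of \cite{CEP} (Lemma \ref{Concentration_lemma}), at speed $N$ rather than $N^2$, then uses compactness and the density theorem to extract a subsequential limit $\mathfrak h$ of the means, and only afterwards identifies $\mathfrak h=\mathfrak h^{\star}$ by the counting argument. Your route is closer to the original \cite{CKP} argument and yields a faster decay rate in $N$, but with a non-explicit constant coming from the variational gap $c(\epsilon)$; the paper's route gives an explicit $\exp(-N\delta^2/\ell)$ bound that is independent of the variational analysis and is the form actually stated in Theorems \ref{The_limit} and \ref{lim2}. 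Two small points you should make explicit if you write this up: uniqueness of $\mathfrak h^{\star}$ is not automatic and requires strict concavity of $\sigma$ in the interior of $\mathcal N$ together with a non-degeneracy assumption on $\mathfrak b$ (cf.\ Proposition \ref{exist} and the discussion preceding it), and the localized upper bound you invoke in the concentration step (counting only tilings near a fixed profile $h$) is exactly the content of Theorem \ref{Functional}, so it should be stated as such rather than folded silently into the global upper bound.
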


However, this theorem does not cover domino tilings of a multiply-connected domain such as the modified Aztec diamond, see \hyperref[fancy]{Figure~\ref{fancy}} and \hyperref[modi]{Section~\ref{modi}}. The first reason is that a height function $H_D^{\Gamma}$ defined by \hyperref[defh]{The Local rule} becomes a multivalued function, that is it gains a non-trivial increment (monodromy) $M(\delta)$ going along a loop $\delta\in\pi_1(\Omega)$, see \hyperref[ring]{Figure~\ref{ring}}. The monodromy $M(\delta)$ is fixed by the domain and $\delta$ for all domino tilings.

The second reason is that in a multiply-connected domain, the boundary condition $B$ usually depends on $D$, see \hyperref[ring]{Figure~\ref{ring}} and \hyperref[island]{Figure~\ref{island}}. More precisely, it means that boundary height functions $B_{D_1}$ and $B_{D_2}$ may differ by a multiple of four on a connected boundary component. Thus, boundary conditions a priori do not converge to a continuous function $\mathfrak{
b}$. Therefore, the variational problem may be ill-defined.

Despite these difficulties, several results on domino tilings of multiply-connected domains exist. We make a review of these works in \hyperref[results]{Final remarks}. 

\begin{figure}[h!]
    \centering
    \includegraphics[width=0.4\linewidth]{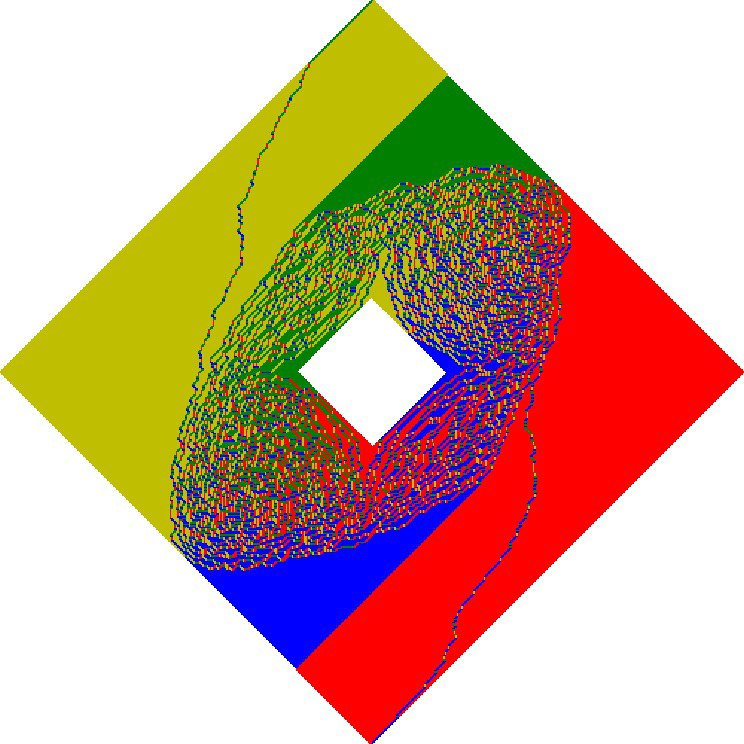}
    \caption{Computer simulation of a random domino tiling of $\mathcal{AD}_{500}$ with height change $R=300$ and monodromy $M=400$. For further details, see \hyperref[modi]{Section~\ref{modi}}.}
    \label{fancy}
\end{figure}

\begin{figure}[h!]
    \centering
    \includegraphics[width=0.3\textwidth]{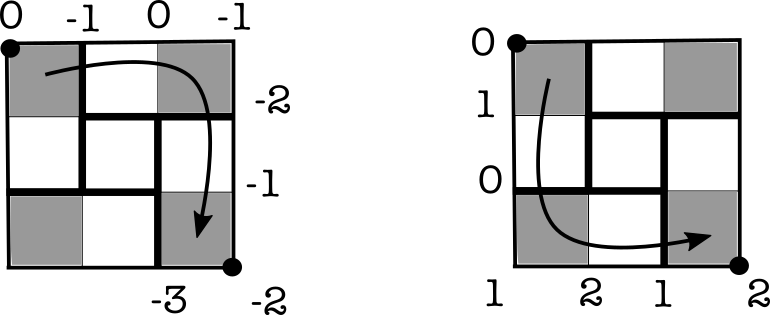}
    \caption{Height function $H(p,\gamma)$ and $H(p,\gamma^{\prime})$ with $M(\gamma^{\prime}\gamma^{-1})=4$, for details see \hyperref[defh]{Definition \ref{defh}}.}
    \label{ring}
\end{figure}

We formulate a generalization of \hyperref[orig]{Theorem~\ref{orig}} for a multiply-connected domain in \hyperref[lim]{Theorem~\ref{lim}}. To the best of our knowledge, it is the first generic result for random domino tilings of a multiply-connected domain that works with a height function with a monodromy. 

Instead of defining the height function by \hyperref[defh]{The Local Rule}, we modify it so that it depends not only on the point $p\in\Gamma$, but also on a path $\gamma$ from a fixed reference point $p_0$ to $p$. This dependence keeps track of the topology and makes the height function a well-defined function.~After going around a hole, not only the value of the height function changes, but also the path in the argument. This situation is very similar to the definition of the complex logarithm map, for details see discussion in \hyperref[complexlog]{Remark \ref{complexlog}}.
We also keep track of relative heights between different points on each connected boundary component. Assuming that the value of the height function at the point $p_0$ is zero, we end up with a collection $R_D:=\{H^{\Gamma}(p_i)_D\}_{i=1}^g$ that consists of $g$ values at points $\{p_i\}_{i=1}^g$ on every connected boundary component, we also need to fix the set of paths modulo continuous transformation $\{\gamma_i\}_{i=1}^g$ along which we compute $H^{\Gamma}_D(p_i)$. Call such a collection of numbers a \textit{height change} of domino tiling $D$.

%\begin{enumerate}
%    \item $H(p_0,e_{p_0\to p_0})=0, H(p_1,e_{p_1\to p_0})=R$
%    \item if an edge $(p,v)$, does not belong to any domino in $D$ (i.e. intersects) then $H_D(v,\gamma_{p_0\to v})=H_D(p,\gamma_{p_0\to p})+1$ if $(p, v)$ has a black square on the left, and $H_D(v,\gamma_{p_0\to v})= H_D(p,\gamma_{p_0\to p})-3$ otherwise.
%    \item after going around a loop $\delta_{p_0\to p_0}$, the value $H_D(p,\gamma_{p \to p_0})$ changes as follows,
%    \begin{center}
%    $H_D(p,\delta_{p_0\to p_0}\gamma)=H_D(p,\gamma )+M(\delta_{p_0\to p_0})$.
%    \end{center}
%    \label{precise}
%\end{enumerate}
%We call $R$ a height change of $D$. These rules give a well-defined function at an arbitrary point $(p,\gamma)$ of a universal covering space of the ring $\Tilde{(A)$. This space can be viewed as an infinite band that consists of rectangles $2\times 8$ glued along short sides.

Define a similar $g$-tuple of real numbers $r=\{r_i\}_{i=1}^g$ for a continuous function, that is, fix its value at a fixed point on each connected boundary component with the set of paths $\{\gamma_i\}_{i=1}^g$ continuous transformation. Before the main theorem, let $\mathfrak{b}$ be an extendable boundary condition on the universal cover of $\Omega$ such that there exists such a function $\mathfrak{h}:\Tilde{\Omega}\mapsto\RR$ that  $\restr{\mathfrak{h}}{\pp\Tilde{\Omega}}=\mathfrak{b}$. 

Then, let $\mathfrak{h}^{\star}$ be the maximizer of $\mathcal{F}$ over the space of Lipschitz functions on $\Tilde{\Omega}$ with boundary condition $\mathfrak{b}$ with an arbitrary height change.
Our main result is the following,

\begin{theorem}
The normalized logarithm of the number of domino tilings of $\Gamma_N$ divided by the area of $\Omega$ has the following asymptotic behavior as $N\to\infty$,
\bb
N^{-2}\log Z(\Gamma_N,B_N)\xrightarrow{N\to\infty} \mathcal{F}(\mathfrak{h}^{\star}).
\ee
\label{lim}
Moreover, $\frac{1}{N} H_D^{\Gamma_N}\overset{\mathbb{P}_N}{\to}\mathfrak{h}^{\star}$ uniformly in probability, and the normalized height change of domino tilings converge to the height change of $\mathfrak{h}^{\star}$, $\frac{1}{N}R_N\overset{\mathbb{P}_N}{\to} r^{\star}$ as $N\to\infty$.
\end{theorem}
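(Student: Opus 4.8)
The plan is to slice the partition function according to the height change and reduce each slice to the simply-connected variational principle, then run a Laplace-type argument. Write
\[
Z(\Gamma_N, B_N) \;=\; \sum_{R} Z_R(\Gamma_N),
\]
where $Z_R(\Gamma_N)$ counts the domino tilings of $\Gamma_N$ with height change exactly $R$; this is a genuine partition of the set of all tilings, since the height change is precisely the tiling-dependent part of the boundary data. Cutting $\Omega$ along $g$ disjoint arcs joining the inner boundary components to the outer one produces a simply-connected domain $\Omega^{\mathrm{cut}}$, a fundamental domain of the universal cover $\widetilde\Omega$; tilings of $\Gamma_N$ with height change $R$ are in bijection with tilings of the corresponding cut lattice region $\Gamma_N^{\mathrm{cut}}$ carrying a single deterministic boundary condition $\widetilde B_N^R$ determined by $\mathfrak b$, the fixed monodromy, and $R$. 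Because every domino height function is Lipschitz with a constant depending only on the lattice, one has $|R|\le CN$ for some $C=C(\Omega)$; hence at most $O(N^g)$ values of $R$ contribute, and for all of them $R/N$ lies in a fixed compact set $K\subset\RR^g$.

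The key analytic input is the variational principle in the \emph{second regime}: for $\Gamma_N^{\mathrm{cut}}$ approximating $\Omega^{\mathrm{cut}}$ with boundary data converging to $\mathfrak b$ and prescribed height change $R_N$ with $R_N/N\to r$ in the interior of the set of feasible height changes, one has $N^{-2}\log Z_{R_N}(\Gamma_N)\to\mathcal F(\mathfrak h^\star_r)$ uniformly for $r$ in compact subsets of that interior, where $\mathfrak h^\star_r$ is the unique $\mathcal F$-maximizer with boundary condition $\mathfrak b$ and height change $r$, and the normalized height functions of such tilings converge uniformly in probability to $\mathfrak h^\star_r$. This is obtained by adapting \cite{CKP} to the simply-connected domain $\Omega^{\mathrm{cut}}$: the partition-function bounds, the flow/entropy estimates, and the uniqueness of the maximizer all carry over because $\widetilde B_N^R$ is a genuine deterministic boundary datum on $\Omega^{\mathrm{cut}}$. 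Writing $\Phi(r):=\mathcal F(\mathfrak h^\star_r)$, concavity of $\sigma$ and the affine dependence of the constraints on $r$ make $\Phi$ concave and upper semicontinuous on the compact convex set of feasible height changes; as in \cite{CKP} the maximizer of $\mathcal F$ with arbitrary height change is unique, hence so is its height change $r^\star$, and $\sup_r\Phi(r)=\Phi(r^\star)=\mathcal F(\mathfrak h^\star)$.

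The free-energy limit follows at once: from $\max_R Z_R(\Gamma_N)\le Z(\Gamma_N,B_N)\le O(N^g)\max_R Z_R(\Gamma_N)$, the uniform slice asymptotics, and the fact that slices with $R/N$ near $\partial K$ or near the boundary of the feasible set contribute negligibly, taking logarithms and dividing by $N^2$ gives
\[
N^{-2}\log Z(\Gamma_N,B_N)\;\xrightarrow{\,N\to\infty\,}\;\sup_r\Phi(r)\;=\;\mathcal F(\mathfrak h^\star).
\]
For the height change, fix $\varepsilon>0$; then
\[
\mathbb P_N\!\big(|N^{-1}R_N-r^\star|>\varepsilon\big)\;=\;\frac{\sum_{|R/N-r^\star|>\varepsilon}Z_R(\Gamma_N)}{Z(\Gamma_N,B_N)}\;\le\;O(N^g)\,\exp\!\Big(N^2\big(\sup_{|r-r^\star|\ge\varepsilon}\Phi(r)-\Phi(r^\star)+o(1)\big)\Big)\;\longrightarrow\;0,
\]
since upper semicontinuity, compactness, and the uniqueness of the maximizer force $\sup_{|r-r^\star|\ge\varepsilon}\Phi(r)<\Phi(r^\star)$; thus $N^{-1}R_N\overset{\mathbb P_N}{\to}r^\star$. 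Conditioning on $\{R_N=R\}$ with $|R/N-r^\star|\le\varepsilon$ and applying the second-regime height-function convergence gives $\|N^{-1}H_D^{\Gamma_N}-\mathfrak h^\star_{R/N}\|_\infty\to 0$ in probability; combining this with the concentration of $N^{-1}R_N$ at $r^\star$ and the continuity of $r\mapsto\mathfrak h^\star_r$ in the uniform norm (stability of the variational problem) yields $N^{-1}H_D^{\Gamma_N}\overset{\mathbb P_N}{\to}\mathfrak h^\star_{r^\star}=\mathfrak h^\star$ uniformly, which also contains the asserted convergence of the normalized height change.

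I expect the main obstacle to be the second-regime variational principle itself, and in particular obtaining large-deviation upper and lower bounds for $Z_R(\Gamma_N)$ that are \emph{uniform} in $R/N$ over compact subsets of the interior of the feasible set — this uniformity is exactly what legitimizes the sum over $R$ — together with the regularity and uniqueness of $\mathfrak h^\star_r$ and the consequent upper semicontinuity of $\Phi$ and uniqueness of $r^\star$. The purely topological bookkeeping (identifying the path-dependent height function on $\widetilde\Omega$ with the combinatorial height change $R_D$, verifying that the monodromy and the boundary data on $\partial\Omega^{\mathrm{cut}}$ are mutually consistent, and that nothing depends on the choice of cuts) is delicate but deterministic and should not present an essential difficulty.
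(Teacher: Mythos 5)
Your architecture inverts the paper's: the paper proves the arbitrary-height-change statement \emph{first}, directly, by applying the Azuma-type concentration inequality (Lemma \ref{Concentration_lemma}) to the full ensemble, extracting a subsequential limit $\mathfrak h$ of the expected height functions via compactness (Theorem \ref{comp}) and the density lemma (Theorem \ref{density}), and then identifying $\mathfrak h=\mathfrak h^{\star}$ by comparing tiling counts near $\mathfrak h$ and near $\mathfrak h^{\star}$ through the surface-tension theorem (Theorem \ref{Functional}); the fixed-height-change version (Corollary \ref{lim2}) is then a corollary. You propose the opposite order: establish the fixed-$R$ (``second regime'') asymptotics slice by slice and assemble $Z(\Gamma_N,B_N)$ by a Laplace sum over the $O(N^g)$ slices. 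That Laplace scheme is sound in outline, but it rests on a step you treat as routine and which is in fact false as stated: the claimed bijection between tilings of $\Gamma_N$ with height change $R$ and tilings of the cut region $\Gamma_N^{\mathrm{cut}}$ ``carrying a single deterministic boundary condition.'' Fixing $R$ (together with $\mathfrak b$ and the monodromy) does determine the height function on all $g+1$ components of $\partial\Gamma_N$, but it does \emph{not} determine it along the $2g$ cut arcs: dominoes cross the cuts freely, so the height profile along each arc is tiling-dependent. Hence $\Omega^{\mathrm{cut}}$ is a simply-connected domain with free (periodically matched) boundary data on the cuts, not a boundary-value problem of the type treated in \cite{CKP}, and the ``key analytic input'' cannot be imported off the shelf. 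Handling the free boundary requires summing over cut profiles — the cutting rule (\ref{Cutting_boundary}) — and re-proving concentration and the surface-tension asymptotics in that setting, which is essentially the content of Lemma \ref{Concentration_lemma} and Theorem \ref{Functional}, i.e.\ the bulk of the paper.

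Even granting the second-regime input, two further points need real arguments. First, you need the slice asymptotics \emph{uniformly} in $R/N$ over compact subsets of the interior of the feasible set, plus a separate argument that slices with $R/N$ near the boundary of the feasible set contribute negligibly; this is delicate precisely because $\sigma$ is strictly concave only in the interior of $\mathcal N$, so uniqueness and regularity of $\mathfrak h^{\star}_r$ can degenerate for extremal $r$ (the paper's non-degeneracy assumption on $\mathfrak b$ addresses the analogous issue once, globally). Second, the continuity of $r\mapsto\mathfrak h^{\star}_r$ in the uniform norm, which you invoke to pass from conditional convergence of $N^{-1}H_D$ given $R$ to unconditional convergence, is asserted without proof; it follows from compactness, uniqueness, and upper semicontinuity of $\mathcal F$, but it is an additional lemma your route needs and the paper's route does not. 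The paper sidesteps both issues by concentrating the whole ensemble around a single limit and reading off the convergence of $N^{-1}R_N$ as a continuous functional of the height function.
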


Note that in $Z(\Gamma_N,B_N)$ we count domino tilings with arbitrary height change. The version of this theorem for a fixed asymptotic height change $r$ also holds, see \hyperref[lim2]{Corollary \ref{lim2}}. Furthermore, for a simply-connected $\Omega$ we recover \hyperref[orig]{Theorem \ref{orig}} since $\Tilde{\Omega}$ becomes $\Omega$ itself, and it has only one connected boundary component on which the height function is fixed. What is important is that for a multiply-connected $\Omega$, $\Tilde{\Omega}$ is not a compact space unlike the situation for simply-connected domain.

\subsection*{Acknowledgements}
The study was funded within the framework of the HSE University Basic Research Program, and the project DIMERS, ANR-18-CE40-0033.
We would like to thank Senya Shlosman and C\'edric Boutilier for carefully reading the manuscript, valuable remarks and fruitful discussions. We also thank Vadim Gorin for his comments on the work and sharing his insights and ideas. We are very grateful to Alexei Gordeev and Kirill Simonov for their help with computer simulation of random domino tilings. Finally, we would like to thank KOMETA space, especially Osya Gordon, for their hospitality and useful discussions at the beginning of the work.

 \subsection*{Structure of the paper}
 
\begin{itemize}
    \item In \hyperref[sect2]{Section \ref{sect2}} we discuss topological notations: universal cover, fundamental domain of it, and quasiperiodic functions together with analogy between the height function and the complex logarithm map.
    \item \hyperref[sect3]{Section \ref{sect3}} is devoted to the definition of the height function on universal covering space and the bijection between domino tilings and height functions.

    \item In \hyperref[sect4]{Section \ref{sect4}} we prove several properties of the height functions: we give a formula for the maximal extension with given boundary height function, criterion of extension of height functions in \hyperref[crith]{Proposition \ref{crith}}, and define the space of asymptotic height functions with a given, and an arbitrary) asymptotic height change. Also, in \hyperref[comp]{Theorem \ref{comp}} we prove compactness of both spaces. We also prove \hyperref[density]{Theorem \ref{density}}, which states that discrete height functions approximate asymptotic height functions and vice versa. 
    \item In \hyperref[sect5]{Section \ref{sect5}} we prove the concentration inequality for the uniform measure on domino tilings of multiply-connected domains in \hyperref[Concentration_lemma]{Lemma \ref{Concentration_lemma}}. Then, and we recall the exact formula for function $\sigma$ in (\ref{Surfacedef}).
    
    \item In \hyperref[sect6]{Section \ref{sect6}} we state and prove the variational principle for arbitrary height change, \hyperref[The_limit]{Theorem \ref{The_limit}},
    and for a fixed asymptotic height change, \hyperref[lim2]{Corollary \ref{lim2}}
    \item In \hyperref[sect7]{Section \ref{sect7}}, we prove existence and uniqueness of the maximizer for the variational problem in \hyperref[exist]{Proposition \ref{exist}}.

    \item \hyperref[sect8]{Section \ref{sect8}} is devoted to \hyperref[Functional]{Theorem \ref{Functional}}.

    \item The last \hyperref[sect9]{Section \ref{sect9}} contains final remarks.
    
\end{itemize}

\section{Topological notation}

In this section we recall and fix basic topological notations, which we will use later, for details see \cite{H,DC,JM}.
\label{sect2}

\subsection{Fundamental domain}

First, let $\Omega$ be a planar domain, that is a compact subset of $\RR^2$ with a piecewise smooth boundary $\pp\Omega$, which consists of $g+1$ connected components, $\pp\Omega=\bigsqcup_{i=0}^g \pp\Omega_i$, where $\pp\Omega_0$ is the external boundary. 

Recall that the universal covering space $\Tilde{\Omega}$ is a simply-connected set, and can be realized as a set of pairs $(x,\gamma_x)$ of points $x\in\Omega$ with paths $\gamma_x$ that connect $x$ to a fixed base point $x_0\in\pp \Omega$. Moreover, one can notice a natural action of fundamental group of $\Omega$ on $\Tilde{\Omega}$. Recall that having two paths modulo continuous transformations, $\gamma_1$ connecting $p_1$ to $p_2$ and $\gamma_{2}$ connecting $p_2$ to $p_3$, one can obtain a third path $\gamma_3:=\gamma_2 \circ \gamma_1$. This path goes first from $p_1$ to $p_2$ and then from $p_2$ to $p_3$ called a \textit{concatenation} or \textit{composition} of paths $\gamma_1$ with $\gamma_2$, which is defined also modulo continuous transformations. If we take $p_2=p_3$ so that $\gamma_2$ is a loop, we obtain an action of $\pi_1(\Omega)$ on points of $\Tilde{\Omega}$ that maps a point $(p,\gamma)$ to $(p,\delta\cdot\gamma)$.
Denote the action of $\delta\in\pi_1(\Omega)$ on a subset $\mathcal{A}\subset \Tilde{\Omega}$ by $\delta\cdot\mathcal{A}$. It is also worth noting that $\Omega=\Tilde{\Omega}\slash  \pi_1(\Omega)$.

\subsubsection{Definition of fundamental domain}
Let us define a fundamental domain $\mathcal{D}(\Omega)$ as follows, 
\begin{itemize}
    \item $\mathcal{D}(\Omega)\subset\Tilde{\Omega}$ is a closed set
    \item $
\Tilde{\Omega}=\bigcup_{\delta\in\pi_1(\Omega)}{\delta\cdot\mathcal{D}(\Omega)}$, where $(\delta_1\cdot\mathcal{D}(\Omega))\cap(\delta_2\cdot\mathcal{D}(\Omega))$ has no interior for $\delta_1\neq\delta_2$.
\label{fund_def}
\end{itemize}
Below, we give one realization of $\mathcal{D}(\Omega)$ in \hyperref[fund]{Construction \ref{fund}} that we use later.

Let us explain the definition of $\mathcal{D}(A)$ on a ring $A$ with a radius $1$ of the internal circle and radius $2$ of the external circle. It can be seen in polar coordinates as 
\bb
A=\{ (\ell\cos \phi,\ell \sin \phi )|\ell\in[1,2], \phi\in \RR\slash{2\pi\ZZ}\},
\ee
or as $\{(\ell,\phi)|\ell \in [1,2], \phi\in\RR\slash{2\pi\ZZ}\}$ in different coordinates. Then, the universal cover can be seen as 
\bb
\Tilde{A}=\{(\ell, \phi)|\ell\in[1,2], \phi\in\RR\}.
\ee

The corresponding fundamental domain is $\mathcal{D}(A)=\{ (\ell, \phi )|\phi\in [ 0,2\pi ], \ell\in[1,2]\}$. Here the fundamental group $\pi_1(A)\simeq\mathbb{Z}$ acts by shifts $(\ell,\phi)\mapsto(\ell,\phi + 2\pi k), k\in \ZZ$, where the integer $k$ represents the winding number of a loop. Note that here we have $g=1$. \label{rring}

\subsubsection{Construction of fundamental domain.}
\label{constuction_fund}
The example above can be generalized to a construction of $\mathcal{D}(\Omega)$ for a generic multiply-connected domain $\Omega$.
First, pick $g$ smooth curves $\{\gamma_i\}^g_{i=1}$ on $\Omega$ that connect $\pp\Omega_0$ with all other connected boundary components $\{\pp\Omega_i\}_{i=1}^g$ and avoid self-intersections such that
after the cut along curves $\{\gamma_i\}_{i=1}^g$ the resulting domain $\mathcal{D}_0(\Omega)$ is connected and simply-connected.
Also note that $\mathcal{D}_0(\Omega)$ can be naturally viewed as a subset of $\Tilde{\Omega}$.

Then, we take the closure of $D_0(\Omega)$ in $\Tilde{\Omega}$ and obtain $\mathcal{D}(\Omega)=\overline{\mathcal{D}_0(\Omega)}$. It can be proved that at the end we obtain a fundamental domain, for details, see Section 1.1, 1.3 in \cite{H}.
\label{fund}
Note that the boundary of a fundamental domain consists of an extra $2g$ boundary pieces in addition to $\pp\Omega$, $\pp\mathcal{D}(\Omega)=\pp\Omega \cup \bigcup_{i=1}^{2g}\upsilon_{i}$, where $\upsilon_i$ and $\upsilon_{i+g}$ are the result of cutting along $\gamma_i$.~One can also note that $\pp\mathcal{D}(\Omega)$ has one connected boundary component.

\subsection{Decomposition of paths}

Each path $\gamma_x$ from $x_0$ to $x$ can be decomposed after a continuous transformation into a product $\gamma_x=\hat \delta \cdot \gamma_x^{\prime}$ of a path $\gamma^{\prime}_x$ conditioned to stay inside $\mathcal{D}(\Omega)$, and a loop $\hat \delta\in\pi_1(\Omega)$. It is easy to see that such a path $\gamma^{\prime}_x$ is unique modulo continuous transformations after choosing $\mathcal{D}(\Omega)$. Also note that the action of $\delta\in\pi_1(\Omega)$ on $\Tilde{\Omega}$ maps $\hat \delta$ by multiplying by $\delta: \hat \delta\mapsto \delta \cdot \hat \delta$, while leaving $\gamma^{\prime}_x$ unchanged.
This allows us to look at a point of $\Tilde{\Omega}$ as a pair $(\hat x,\delta)$ of a point $x\in\mathcal{D}(\Omega)$ and a loop $\hat\delta$, which is changing after going around a loop $\delta^{\prime}$ as $(x,\hat\delta)\mapsto(x,\delta^{\prime}\cdot\hat\delta)$. Also, it is worth comparing with \hyperref[fund_def]{Definition \ref{fund_def}}.

\subsection{Quasiperiodic functions}
Throughout the paper, all functions on $\Tilde{\Omega}$ are assumed to be \textit{quasi-periodic}. These functions are similar to the complex logarithm map, which is almost a well-defined map, but it gets the increment $2\pi i$ after one turn around zero in counter-clockwise direction.
\label{quasiperiodic}

Suppose that we have a monodromy map $\mathfrak{m}$, that is a map $\mathfrak{m}:\pi_1(\Omega)\to\RR$ that  $\mathfrak{m}(\delta^{\prime}\cdot\delta)=\mathfrak{m}(\delta^{\prime})+\mathfrak{m}(\delta)$.
Then, $f:\Tilde{\Omega}\to\RR$ is a quasiperiodic function with monodromy $\mathfrak{m}$ if after going along a loop $\delta$ on $\Omega$, it changes as 
\bb
f(x,\delta\cdot\hat \delta)=f(x,\hat \delta)+\mathfrak{m}(\delta),
\ee
or in other words upon substitution $\delta=\delta^{\prime}\cdot\hat \delta^{-1}$,
\bb
f(x,\hat \delta)=f(x,\hat \delta^{\prime})-\mathfrak{m}(\delta^{\prime}_x\cdot \hat \delta^{-1}).
\ee 
The monodromy $\mathfrak{m}$ keeps track of the possible ambiguity of $f$ after going along a loop. Denote the space of quasiperiodic functions for a given monodromy $\mathfrak{m}$ by $\mathcal{H}(\Omega,\mathfrak{m})$. 

Two examples of quasiperiodic functions are the complex logarithm map $\log z$ defined on a punctured complex plane (for details see \hyperref[complexlog]{Subsection \ref{complexlog}}), and the function $(\phi,\ell)\mapsto \phi+ \ell \cos \phi $ defined on the ring $A$ from \hyperref[rring]{Example \ref{rring}}. Since $\pi_1(A)\simeq \ZZ$, each $\hat \delta$ corresponds to an integer, which enumerates “landings of a stairwell” see \hyperref[universal]{Figure \ref{universal}}.

The action of $\pi_1(\Omega)$ on $\Tilde{\Omega}$ naturally extends to an action of $\pi_1(\Omega)$ on quasi-periodic functions on $\mathcal{A}\subset\Tilde{\Omega}$. That is, for a quasi-periodic function $f$ and a $\delta\in\pi_1(\Omega)$ define $\delta \cdot f(p,\hat \delta):=f(p,\delta \cdot \hat \delta)= f(p, \hat \delta)+\mathfrak{m}(\delta)$.

Another property of quasi-periodic functions is that the difference between two functions $f, f^{\prime}$ from $\mathcal{H}(\Omega,\mathfrak{m})$, $f-f^{\prime}$, is a well-defined function on $\Omega$ since $f-f^{\prime}$ has monodromy $0$.

\begin{lemma}
Suppose $f,f^{\prime} \in \mathcal{H}(\Omega,\mathfrak{m})$.
Then, functions defined by $x\mapsto f(x)-f^{\prime}(x)$ and $x\mapsto\nabla f(x)$ are well-defined as functions on $\Omega$.
\label{well}
\end{lemma}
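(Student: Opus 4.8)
The plan is to unwind the definition of quasiperiodicity and observe that the monodromy is additive and, crucially, the \emph{same} for both $f$ and $f'$, so it cancels in the difference. Concretely, fix a point $x\in\Omega$ and two paths $\hat\delta,\hat\delta'$ from the base point $x_0$ to $x$; then $\hat\delta'\cdot\hat\delta^{-1}$ is a loop $\delta\in\pi_1(\Omega)$, and by the quasiperiodicity relation we have $f(x,\hat\delta') = f(x,\hat\delta)+\mathfrak m(\delta)$ and likewise $f'(x,\hat\delta') = f'(x,\hat\delta)+\mathfrak m(\delta)$ with the \emph{identical} scalar $\mathfrak m(\delta)$, since both functions lie in $\mathcal H(\Omega,\mathfrak m)$. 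Subtracting, $\big(f(x,\hat\delta')-f'(x,\hat\delta')\big) = \big(f(x,\hat\delta)-f'(x,\hat\delta)\big)$, so the value of $x\mapsto f(x)-f'(x)$ is independent of the chosen lift of $x$ to $\tilde\Omega$. Equivalently, $f-f'$ has monodromy $\mathfrak m - \mathfrak m \equiv 0$, hence is $\pi_1(\Omega)$-invariant and therefore descends to a genuine function on the quotient $\Omega=\tilde\Omega/\pi_1(\Omega)$.

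For the gradient statement, the point is that $\nabla f$ is computed \emph{locally} on $\tilde\Omega$, and the deck transformations act by the local isometries coming from the covering map: writing a point of $\tilde\Omega$ near $x$ in local coordinates pulled back from $\Omega$, the map $(x,\hat\delta)\mapsto(x,\delta\cdot\hat\delta)$ fixes the local coordinate $x$ and only shifts the sheet label, adding the constant $\mathfrak m(\delta)$ to $f$. Differentiating a relation of the form $f(x,\delta\cdot\hat\delta)=f(x,\hat\delta)+\mathfrak m(\delta)$ in the $x$-variable kills the constant $\mathfrak m(\delta)$, so $\nabla f(x,\delta\cdot\hat\delta)=\nabla f(x,\hat\delta)$ for every $\delta\in\pi_1(\Omega)$. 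Thus $\nabla f$ is $\pi_1(\Omega)$-invariant on $\tilde\Omega$ and descends to a well-defined vector field on $\Omega$. (One should note $f$ need only be differentiable a.e.\ — which is the relevant regularity class here, Lipschitz functions — and the argument is unaffected since the covering map is a local diffeomorphism away from a null set.)

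The only mild subtlety, and the step I would be most careful about, is making precise that the covering projection $\tilde\Omega\to\Omega$ is a local diffeomorphism so that ``$\nabla f(x)$'' on $\Omega$ is literally the push-forward of $\nabla f$ on $\tilde\Omega$; this is where one uses that $\Omega$ has piecewise smooth boundary and the fundamental domain construction of \hyperref[fund]{Construction \ref{fund}}, so that the identifications along the cut curves $\upsilon_i$ are smooth. Everything else is the formal cancellation of the additive monodromy, and no estimate or compactness input is needed.
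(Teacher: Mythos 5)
Your proof is correct and follows essentially the same route as the paper's: both arguments observe that $f$ and $f'$ acquire the identical increment $\mathfrak m(\delta)$ around any loop, so the monodromy cancels in the difference, and that differentiating in $x$ kills the additive constant so $\nabla f$ is deck-invariant. Your version simply spells out the details (lifts, a.e.\ differentiability, the covering map being a local diffeomorphism) that the paper leaves implicit.
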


\begin{proof}

Indeed, $f$ and $f^{\prime}$ gain the same increment $\mathfrak{m}(\delta)$ going around the loop $\delta$, therefore $f-f^{\prime}$ has the trivial monodromy, and thus, it is a well-defined function on $\Omega$. Moreover, if $f$ is differentiable, then its gradient by definition is also a well-defined map on $\Omega$.
    
\end{proof}

\begin{remark}
Since the gradient of continuous(discrete) height function is a well-defined map, we could in principle formulate all the statements in the language of continuous/discrete gradients. Discrete gradients parametrize domino tilings without any ambiguity as naively defined height functions.
\begin{figure}
    \centering
    \includegraphics[width=0.3\linewidth]{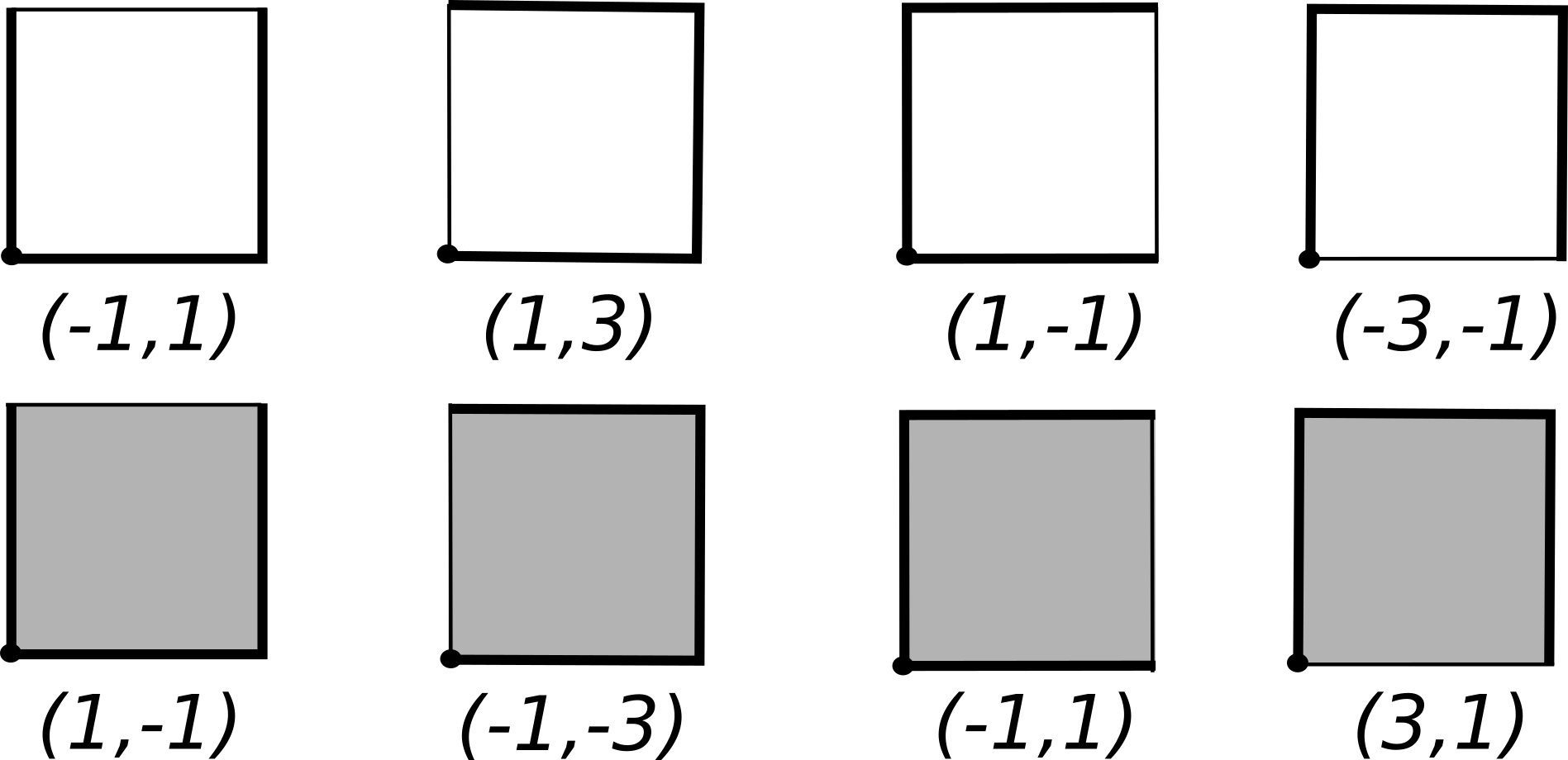}
    \caption{Parametrization of domino tilings by the discrete slopes.}
    \label{slope}
\end{figure}
Probably, this is a possible alternative to our approach. However, it would require a more sophisticated technique i.e., topology of the space of continuous gradients as in a recent work\cite{WS}.
\end{remark}

An important fact about quasiperiodic functions is that their behavior is determined by their values on $\mathcal{D}(\Omega)$. It can be formulated as the following proposition.
\begin{proposition}
Suppose we have two quasi-periodic functions $f,f^{\prime}\in\mathcal{H}(\Omega,\mathfrak{m})$ that coincide on $\mathcal{D}(\Omega)$.
Then, $f=f^{\prime}$ on $\Tilde{\Omega}$.
\label{unique}
\end{proposition}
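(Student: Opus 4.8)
The plan is to exploit the fact that every point of $\Tilde{\Omega}$ can be written, via the decomposition of paths from the previous subsection, as a pair $(x,\hat\delta)$ with $x\in\mathcal{D}(\Omega)$ and $\hat\delta\in\pi_1(\Omega)$, and that quasiperiodicity propagates values off of $\mathcal{D}(\Omega)$ in a completely prescribed way. So the proof is really just a bookkeeping argument combining the covering property of the fundamental domain with the defining functional equation of $\mathcal{H}(\Omega,\mathfrak{m})$.

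Concretely, first I would fix an arbitrary point $p\in\Tilde{\Omega}$. By the covering property in \hyperref[fund_def]{Definition \ref{fund_def}}, $\Tilde{\Omega}=\bigcup_{\delta\in\pi_1(\Omega)}\delta\cdot\mathcal{D}(\Omega)$, so there is some $\delta\in\pi_1(\Omega)$ and some $q\in\mathcal{D}(\Omega)$ with $p=\delta\cdot q$; writing $q=(x,\hat\delta_0)$ in the pair notation this means $p=(x,\delta\cdot\hat\delta_0)$. Then I apply the quasiperiodicity relation to both $f$ and $f'$: we have
\[
f(p)=f(x,\delta\cdot\hat\delta_0)=f(x,\hat\delta_0)+\mathfrak{m}(\delta),
\qquad
f'(p)=f'(x,\hat\delta_0)+\mathfrak{m}(\delta).
\]
Since $q=(x,\hat\delta_0)\in\mathcal{D}(\Omega)$, the hypothesis gives $f(x,\hat\delta_0)=f'(x,\hat\delta_0)$, and since $f$ and $f'$ share the same monodromy map $\mathfrak{m}$, subtracting the two displays yields $f(p)=f'(p)$. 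As $p$ was arbitrary, $f=f'$ on $\Tilde{\Omega}$.

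The only genuine subtlety — and the step I would be most careful about — is the overlap of translates $\delta_1\cdot\mathcal{D}(\Omega)$ and $\delta_2\cdot\mathcal{D}(\Omega)$ for $\delta_1\neq\delta_2$: a boundary point $p$ may admit more than one representation $(x,\hat\delta)$. I would note that this causes no problem because the argument above shows $f(p)=f'(p)$ for \emph{every} valid representation, so the conclusion is independent of the choice; alternatively one observes that the relation $f(x,\delta\cdot\hat\delta)=f(x,\hat\delta)+\mathfrak{m}(\delta)$ forces $f$ on all of $\Tilde{\Omega}$ to be uniquely determined by its restriction to $\mathcal{D}(\Omega)$ together with $\mathfrak{m}$, and $f$ and $f'$ agree on both pieces of data. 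One can also phrase the whole thing through \hyperref[well]{Lemma \ref{well}}: the difference $f-f'$ has monodromy $0$, hence descends to a well-defined function on $\Omega$, which vanishes on the image of $\mathcal{D}(\Omega)$; but that image is all of $\Omega$ since $\mathcal{D}(\Omega)$ surjects onto $\Omega$ under the covering projection, so $f-f'\equiv 0$. I would present the direct computation as the main line and mention the $f-f'$ viewpoint as a one-line remark, since no continuity or regularity of $f$ is needed — the statement is purely algebraic in the quasiperiodic structure.
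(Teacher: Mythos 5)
Your argument is the same as the paper's: decompose $\Tilde{\Omega}$ into the translates $\delta\cdot\mathcal{D}(\Omega)$, use agreement on $\mathcal{D}(\Omega)$, and propagate via the shared monodromy $\mathfrak{m}$ — you simply spell out the computation (and the overlap caveat) that the paper leaves as "straightforward." Correct, and no substantive difference in approach.
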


\begin{proof}
By definition of the fundamental domain, we can express the universal covering space $\Tilde{\Omega}$ as the union of fundamental domains,
\bb
\Tilde{\Omega}=\bigcup_{\delta\in\pi_1(\Omega)}{\delta\cdot\mathcal{D}(\Omega)}.
\label{union}
\ee
Now, the proof is straightforward, we need to use two facts. The first fact is the assumption that the values of $f,f^{\prime}$ coincide on $\mathcal{D}(\Omega)$. The second one is that both functions have the same monodromy data $\mathfrak{m}(\delta)$ that gives agreement of their values on $\delta\cdot\mathcal{D}(\Omega)$.
\end{proof}

\hyperref[unique]{Proposition \ref{unique}} allows recovering a quasi-periodic function $f$ defined on $\Tilde{\Omega}$ uniquely from its restriction to $\mathcal{D}(\Omega)$  if we know the monodromy $\mathfrak{m}$.

Call the subspace of functions on $\mathcal{D}(\Omega)$ obtained by restriction of functions from $\mathcal{H}(\Omega,\mathfrak{m})$
\bb
\mathcal{H}^{\prime}\left(\mathcal{D}(\Omega),\mathfrak{m}\right):=\{f: f=\restr{g}{\mathcal{D}(\Omega)} \text{ for some } g\in \mathcal{H}(\Omega,\mathfrak{m}) \}.
\ee

\begin{proposition}
Functional spaces $\mathcal{H}^{\prime}(\mathcal{D}(\Omega),\mathfrak{m})$ and $\mathcal{H}(\Omega,\mathfrak{m})$ are isomorphic with each other. In other words, each function $f^{\prime}\in \mathcal{H}^{\prime}(\Omega,\mathfrak{m})$ extends to a function $f\in\mathcal{H}(\Omega,\mathfrak{m})$ uniquely. 
\end{proposition}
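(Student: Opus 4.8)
The plan is to exhibit the isomorphism explicitly as the restriction map together with the extension map furnished by Proposition \ref{unique}, and then check these are mutually inverse linear bijections. First I would define the restriction map $\rho:\mathcal{H}(\Omega,\mathfrak{m})\to\mathcal{H}^{\prime}(\mathcal{D}(\Omega),\mathfrak{m})$ by $\rho(g)=\restr{g}{\mathcal{D}(\Omega)}$; this is well-defined and surjective directly from the definition of $\mathcal{H}^{\prime}(\mathcal{D}(\Omega),\mathfrak{m})$, and it is clearly linear. The only substantive point is injectivity, which is exactly the content of Proposition \ref{unique}: if $\rho(g)=\rho(g')$, i.e. $g$ and $g'$ agree on $\mathcal{D}(\Omega)$ and have the same monodromy $\mathfrak{m}$, then $g=g'$ on all of $\Tilde{\Omega}$. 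Hence $\rho$ is a linear bijection, and its inverse $\rho^{-1}=:E$ is the claimed extension map.

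Next I would make the extension map concrete so that the last sentence of the statement — existence and uniqueness of the extension — is transparent. Given $f'\in\mathcal{H}^{\prime}(\mathcal{D}(\Omega),\mathfrak{m})$ and a point $q\in\Tilde{\Omega}$, write $q\in\delta\cdot\mathcal{D}(\Omega)$ for some $\delta\in\pi_1(\Omega)$ (possible by the covering identity $\Tilde{\Omega}=\bigcup_{\delta}\delta\cdot\mathcal{D}(\Omega)$), so that $\delta^{-1}\cdot q\in\mathcal{D}(\Omega)$, and set $E(f')(q):=f'(\delta^{-1}\cdot q)+\mathfrak{m}(\delta)$. I would check this is independent of the choice of $\delta$: if $q$ lies in both $\delta_1\cdot\mathcal{D}(\Omega)$ and $\delta_2\cdot\mathcal{D}(\Omega)$, then $\delta_2^{-1}\delta_1$ maps a point of $\mathcal{D}(\Omega)$ to a point of $\mathcal{D}(\Omega)$; using that $f'$ is the restriction of some $g\in\mathcal{H}(\Omega,\mathfrak{m})$ and the cocycle relation $\mathfrak{m}(\delta'\cdot\delta)=\mathfrak{m}(\delta')+\mathfrak{m}(\delta)$, both formulas give $g(q)$, so they agree. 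By construction $E(f')$ satisfies the quasiperiodicity relation $E(f')(x,\delta\cdot\hat\delta)=E(f')(x,\hat\delta)+\mathfrak{m}(\delta)$ and restricts to $f'$ on $\mathcal{D}(\Omega)$, so $E(f')\in\mathcal{H}(\Omega,\mathfrak{m})$ and $\rho\circ E=\mathrm{id}$; combined with injectivity of $\rho$ this gives $E\circ\rho=\mathrm{id}$ as well. Linearity of $E$ is immediate from the formula since $\mathfrak{m}$ is fixed (the monodromy term cancels in differences, or one simply notes $E$ is the inverse of the linear bijection $\rho$).

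I would then conclude that $\rho$ and $E$ are mutually inverse linear isomorphisms, which is exactly the assertion that $\mathcal{H}^{\prime}(\mathcal{D}(\Omega),\mathfrak{m})\cong\mathcal{H}(\Omega,\mathfrak{m})$, and that every $f'\in\mathcal{H}^{\prime}(\mathcal{D}(\Omega),\mathfrak{m})$ extends uniquely. The main obstacle — and it is a mild one — is the well-definedness of $E(f')$ at points lying on the overlap $(\delta_1\cdot\mathcal{D}(\Omega))\cap(\delta_2\cdot\mathcal{D}(\Omega))$, i.e. on the boundary pieces $\upsilon_i$ of the fundamental domain; this is where one genuinely needs both that $f'$ comes from a global quasiperiodic function and that the monodromy map $\mathfrak{m}$ is a homomorphism, and it is essentially a restatement of Proposition \ref{unique}. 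Everything else is bookkeeping with the $\pi_1(\Omega)$-action already set up in Section \ref{sect2}.
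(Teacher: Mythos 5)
Your argument is correct and follows essentially the same route as the paper: the restriction map is surjective by definition, injectivity is Proposition \ref{unique}, and the inverse is given by the same explicit formula $f(x,\delta)=\hat f(x,\mathcal{I})+\mathfrak{m}(\delta)$. Your additional check that the extension is well defined on the overlaps $(\delta_1\cdot\mathcal{D}(\Omega))\cap(\delta_2\cdot\mathcal{D}(\Omega))$ is a welcome bit of extra care that the paper's proof leaves implicit.
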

\begin{proof}
We have the projection from $\mathcal{H}(\Omega,\mathfrak{m})$ to $\mathcal{H}^{\prime}(\Omega,\mathfrak{m})$ that is surjective by definition of $\mathcal{H}^{\prime}(\Omega,\mathfrak{m})$. In order to construct the map in the opposite direction suppose that $\hat{f}:=\restr{f}{\mathcal{D}(\Omega)}$, so we know the value $\hat{f}(x)=f(x,\mathcal{I})$ for the trivial loop $\mathcal{I}$. Then, $f(x,\delta)=\hat{f}(x,\mathcal{I})+\mathfrak{m}(\delta\cdot\mathcal{I})=\hat{f}(x,\mathcal{I})+\mathfrak{m}(\delta)$. Therefore, once we know that $\hat{f}$ can be extended to a function on $\Tilde{\Omega}$ with monodromy $\mathfrak{m}$, we can recover its values on the whole $\Tilde{\Omega}$ from the values on $\mathcal{D}(\Omega)$.

\label{iso}
\end{proof}

\subsection{Boundary condition of quasi-periodic functions.}

Let us take a point $x_i$ on a connected boundary component $\pp\Tilde{\Omega}_i$ for each $0 \leq i \leq g$, and let $f\in\mathcal{H}(\Omega,\mathfrak{m})$. 
Boundary conditions of $f$ is a set of functions $\mathfrak{b}_i:\pp\Tilde{\Omega}_i\to\RR$ on each connected boundary component, such that $\restr{f}{\pp\Tilde{\Omega}_i}(x,\delta)-f(x_i,\delta) =\mathfrak{b}_{i}(x,\delta)$ for $x\in\pp\mathcal{D}(\Omega)_i$. This condition determines $f$ on $\pp\Tilde{\Omega}_i$ up to an additive constant that can be fixed by an extra condition $f(x_i,\delta)=r_i$ for an $r_i\in\RR$. We also assume that $f(x_0,\mathcal{I})=0$

Once we think of $f$ as a surface, the value $r_i:=f(x_i,\delta)-f(x_0,\delta)$ represents the relative height of $\pp\Tilde{\Omega}_i$ compared to $\pp\Tilde{\Omega}_0$. Call $\{r_i\}_{i=1}^g$ the height change. Then, there are two ways to fix the boundary condition of $f$, the first way is to take $g+1$ functions $\{\mathfrak{b}_i\}_{i=0}^g$ on each connected boundary component $\pp\Tilde{\Omega}_i$ and fix $\restr{f}{\pp\\Tilde{Omega}_i}(x,\delta)-f(x_i,\delta)=\mathfrak{b}_i(x,\delta)$. This way we fix f up to an additive shift, denote the space of such quasi-periodic functions on $\Tilde{\Omega}$ with monodromy $\mathfrak{m}$ $\mathcal{H}(\Omega,\mathfrak{m},\{\mathfrak{b}_i\})$. The second option is to further fix value $f(x_i,\mathcal{I})=r_i$ for $r_i\in\RR$, call the space of such functions $\mathcal{H}(\Omega,\mathfrak{m},\{\mathfrak{b}_i\},r_i)$.

\section{Height function on universal covering space of discrete domains}

This section is devoted to definitions of height functions and asymptotic height functions.

Our first goal is the definition of the universal cover of a lattice domain. Then, we define the height function and construct a bijection between the set of height functions and the set of domino tilings of a multiply-connected lattice domain $\Gamma$.

\label{sect3}

\subsection{Universal cover of a lattice domain}
Suppose that we have a multiply-connected domain $\Omega$ with a lattice domain $\Gamma\subset\Omega$. Let us define the universal cover of $\Gamma$ denoted by $\Tilde{\Gamma}$ as a subset of $\Tilde{\Omega}$ as follows.
Recall that the expression of the universal cover of $\Omega$,
$\Tilde{\Omega}=\bigcup_{\delta\in\pi_1(\Omega)}{\delta\cdot\mathcal{D}(\Omega)}$.

Since $\Gamma$ is a subset of $\Omega$, define the fundamental lattice domain $\mathcal{D}(\Gamma)$ to be the lattice domain remaining after performing the cuts $\gamma_i$ from \hyperref[constuction_fund]{construction \ref{constuction_fund}}, where the cuts are continuously changed so that a path $\gamma_i$ consists of edges of $\Gamma$.

Then, we can define the universal cover of a lattice domain $\Gamma$ denoted by $\Tilde{\Gamma}$ similarly to \hyperref[fund_def]{Definition \ref{fund_def}},
\bb
\Tilde{\Gamma}:=\bigcup_{\delta\in\pi_1(\Omega)}{\delta\cdot\mathcal{D}(\Gamma)}.
\ee
It is easy to see that $\Tilde{\Gamma}$ is a subset of $\Tilde{\Omega}$.

\subsection{Height function}

First, recall the usual definition of the height function for a simply-connected domain $\Gamma$.
Define the boundary of $\Gamma$, $\pp\Gamma:=\{ p\in\Gamma| p\sim \ZZ^2\setminus\Gamma \}$, where $\sim$ means graph adjacency on $\ZZ^2$. Later we will assume that $\Gamma$ approximate a continuous domain $\Omega$, so that $\pp\Gamma$ will approximate $\pp\Gamma$.

The height function is defined by \textit{the Local Rule} as follows. 
\begin{definition}
   A function $H_D^{\Gamma}: V(\Gamma)\to\ZZ$ is called a height function if for a fixed $p_0\in\pp\Gamma$ the following holds, 
\begin{enumerate}
    \item Set the value of $H_D^{\Gamma}(p_0):=0$ for all $D$ and a fixed point $p_0\in\pp\Gamma$.
    \item If the edge $v:=(p_1,p_2)$ has a black square on its left, then $H_D^{\Gamma}(p_2)$ equals $H^{\Gamma}_D(p_1) + 1$ if $v$ does not cross a domino in $D$ and $H_D^{\Gamma}(p_1) - 3$ otherwise.
    \label{defh}
\end{enumerate}
\end{definition}
By \cite{T}, $H_D^{\Gamma}$ is a well-defined for a simply-connected $\Gamma$.~However, in a multiply-connected region, $H_D^{\Gamma}$ might get a non-zero increment going along a loop around a hole, see \hyperref[ring]{Figure \ref{ring}}. Therefore, we need to modify this definition so that it will be still a well-defined map.

Let us give an intrinsic definition of the height function $H^{\Gamma}$ defined on a connected, multiply-connected lattice domain $\Gamma$.
Here we follow our conventions from \hyperref[quasiperiodic]{Subsection \ref{quasiperiodic}}.

We pick once point $p_i\in\Gamma$ on each connected boundary component $\pp\Gamma_i$, where $\pp\Gamma_0$ is the external connected boundary component of $\Gamma$. Fix also a set of paths $\{\gamma_i^{\prime}\}_{i=1}^g$ from $p_0$ to each $p_i$ and a path $\gamma$ from $p_0$ to a point $p\in\Gamma$.

Also, let $\{R_i\}_{i=0}^{g}$ be a collection of integers with $R_0=0$.

Let us mark a single edge on every lattice square of $\Gamma$. We assume that this edge is not a boundary edge. And suppose that $M^{\Gamma}$ is a monodromy, that is $M^{\Gamma}:\pi_1(\Omega)\to 4 \ZZ$ and satisfies $M^{\Gamma}(\delta \cdot \delta^{\prime})=M^{\Gamma}(\delta)+M^{\Gamma}(\delta^{\prime})$.
Let us proceed to definitions of height function and its boundary condition,

\begin{definition}
Call a family of integer functions $B_i^{\Gamma}:\Tilde{\pp\Gamma_i} \to\ZZ$ the boundary heights if they satisfy the following conditions
\begin{itemize}
\item the value of $B_i^{\Gamma}$ increases by $1$ counterclockwise along the edges of any black square except the marked edge on it.
\item the value of $B_i^{\Gamma}$ increases by $1$  clockwise along all edges of any white square except the marked edge on it.
\item going along a loop $\delta_i \in \pi _1(\Omega)$ $B^{\Gamma}_i(v,\delta_v)$ changes as follows:
        \bb
     B_i^{\Gamma}(v,\delta_i\cdot\delta)=B_i(v,\delta)+M^{\Gamma}(\delta_i).
     \ee
\end{itemize}
\end{definition}

Then, we can define the height function with given boundary heights.

\begin{definition}
A function $H^{\Gamma}:\Tilde{\Gamma}\to \ZZ$ is a height function with monodromy $M$, set of reference points $\{p_i\}_{i=0}^g$, boundary heights $B_i$ and height change $\{R_i\}_{i=0}^g$ if the following holds,
\begin{itemize}
    \item $\restr{H}{\pp\Gamma_i}(p,\delta)-H(p_i,\delta)=B_i(p,\delta)$ for $p\in\pp\Tilde{\Gamma}_i$.
    \item $H(p_i,\gamma_i^{\prime})=R_i$.
    \item the value of $H$ increases by $1$  counterclockwise along all edges of any black square except the marked edge on it.
    \item the value of $H$ increases by $1$  clockwise along all edges of any white square except the marked edge on it.
    \item going along a loop $\delta \in \pi _1(\Omega)$ $H(v,\delta_v)$ changes as follows:
        \bb
     H(v,\delta\cdot\delta_v)=H(v,\delta_v)+M^{\Gamma}(\delta).
     \ee
\end{itemize}
\label{hdef}
\end{definition}

We think of a marked edge as a dual object to a domino, in other words to a dimer. In fact, the marked edge cuts a domino into two unit squares. Thus, a boundary of a domino is formed by non-marked edges (edges with increment $\pm 1$). 

Note that a different choice of points $\{p_i\}_{i=1}^g$ changes $\{R_i\}_{i=1}^g$ by an additive shift. For instance, if we change a point $x_i$ to an adjacent point $p_i^{\prime}\in\pp\Gamma$, the height change component $R_i$ will change by $\pm 1$ depending on the value $H(p_i^\prime)$.

The height function defined this way on the ring is, in fact, a well-defined function only on the corresponding “Riemannian surface” that can be schematically viewed on \hyperref[universal]{Figure \ref{universal}}.
\begin{figure}
    \centering
    \includegraphics[width=0.3\linewidth]{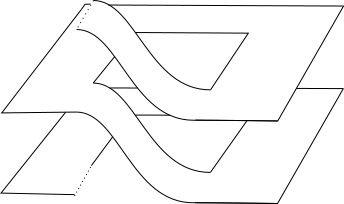}
    \caption{“Riemannian Surface” of a ring.}
    \label{universal}
\end{figure}

\subsection{Analogy with complex analysis}
In order to fully grasp the definition of the height function on a multiply-connected domain, it is useful to keep in mind the definition of $\log z$ for complex $z$ through the analytic continuation along a path.
\label{complexlog}
Recall that one can define $\log z$ for $z\in \mathbb{C}\setminus \{0\}$ by fixing the value $\log (1)=0$, and then by the direct analytic continuation along a path $\gamma$ that connects point $1$ with point $z$ via $\log_{\gamma}(z)=\int_{\gamma}\frac{d\zeta}{\zeta}$. 
However, two such logarithms $\log_{\gamma}z$ and $\log_{\gamma^{\prime}} z$ defined this way may differ by an integer multiply of $2\pi i$. Therefore, one needs to choose a cut to make $\log_{\gamma}(z)$ a single-valued function with a branch-cut with a tradeoff of discontinuity at the branch-cut.~There is also a way to make $\log z$ a continuous function defined on the corresponding so-called “Riemann surface” \cite{Lang}.~A point of this surface, known as the universal covering space, can be seen as a pair of a point $z\in\mathbb{C}\setminus \{ 0\}$ with a path connecting $z$ to point $1$ modulo continuous transformations in $\mathbb{C}\setminus 0$.

In \hyperref[monod]{Definition~\ref{monod}} we define the height function $H_D$ of domino tiling $D$ of a ring similarly to $\log z$. In order to choose a branch, we make a cut along $\RR_{>0}$ and fix a value of $H_D$ at a point $p_0$. Then, the residue of $\log_\gamma(z)$ is analogous to monodromy of $H_D$. Globally, the height function $H_D$ is also a well-defined function only on the corresponding “Riemann surface” $\Tilde{A}$. Later on, we generalize this setup to a more complicated domain.

In the following discussion, by a path we mean an oriented lattice path on $\ZZ^2$, that is a sequence of oriented edges. Let also $\gamma, \gamma^{\prime}$ be two paths modulo continuous transformations that connect point $p_0$ with point $p$. Note in mind that $\gamma^{-1} \gamma^{\prime}$ is a loop. Also recall that $I$ is the constant map of the point $p_0$ to itself. See a summary of this analogy between $H$ and $\log_{\gamma}$ in \hyperref[tabl]{Table \ref{tabl}}.

\begin{table}[h]
    \centering
    \begin{tabular}{c|c}
       $\log_{\gamma} z$  & $H_D(p,\gamma)$ \\
       Residue of $\frac{1}{z}$ at $z=0$ & Monodromy of $H$ \\
       
       $\log_{I}(1)=0$ & $H_D(p_0,I)=0$ \\
       $\log_{\gamma}(z)=\int_{\gamma} \frac{d\zeta}{\zeta}$ & \hyperref[defh]{The Local rule} \\
       $\log_{\gamma}z=\log_{\gamma^{\prime}}z+\int_{\gamma^{-1}\gamma^{\prime}}\frac{d\zeta}{\zeta}$ & $H_D(p,\gamma)=H_D(p,\gamma^{\prime})+M(\gamma^{-1}\gamma^{\prime})$ \\
       Branch-cut along $\RR_{>0}$  & Cut along a path connecting different
       connected boundary components.
    \end{tabular}
    \caption{Analogy between height function $H_D(p,\gamma)$ and $\log_{\gamma} z$}
    \label{tabl}
\end{table}

Let us show correctness of the definition of height function $H$. That is increments of $H$ along any two paths $\gamma,\gamma^{\prime}$ with the same starting point $p$ and ending point $p^{\prime}$ coincide once the paths can be continuously transformed one into the other.

\begin{lemma}
The increment of $H$ along the path $\gamma$ equals to its increment along the path $\gamma^{\prime}$ if these paths can be continuously transformed one into another.
\bb
H(p^{\prime},\gamma)=H(p^{\prime},\gamma^{\prime})
\ee
\label{homotopy}
\end{lemma}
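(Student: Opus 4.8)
The plan is to show that the local rule, together with the marked-edge convention, forces the increment of $H$ around any closed lattice loop $\delta$ that is contractible in $\Omega$ (equivalently, in $\Gamma$) to vanish; the general homotopy statement then follows because $\gamma^{\prime}\gamma^{-1}$ is such a contractible loop and the increment along $\gamma'$ minus the increment along $\gamma$ equals the increment around $\gamma^{\prime}\gamma^{-1}$. Note that the monodromy condition in \hyperref[hdef]{Definition \ref{hdef}} already handles the non-contractible loops; what is at stake in \hyperref[homotopy]{Lemma \ref{homotopy}} is precisely the contractible case, i.e.\ the ordinary well-definedness of the local rule once one is inside the fundamental domain $\mathcal{D}(\Gamma)\subset\Tilde{\Gamma}$.

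First I would reduce to elementary loops. Any contractible lattice loop in $\Gamma$ bounds a union of unit lattice squares (this is just the discrete Green/Stokes statement for $\ZZ^2$, or an induction on the number of enclosed faces: pick a square adjacent to the loop from inside, and homotope the loop across it). It therefore suffices to check that the increment of $H$ around the boundary of a single unit square of $\Gamma$ is zero, and that homotoping across one square changes the increment by exactly that boundary contribution. For the single-square check, the increment is by definition $\pm1$ on each of the three non-marked edges and equals whatever the local rule prescribes on the marked edge; going counterclockwise around a black square the three non-marked edges contribute $+3$, so one needs the marked edge traversed clockwise to contribute $-3$, i.e.\ the marked edge must always be a ``domino crossing'' edge. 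This is exactly the content of the marked-edge-as-dual-to-a-dimer remark following \hyperref[hdef]{Definition \ref{hdef}}: every unit square of $\Gamma$ is half of a domino of $D$, the marked edge is the one cut by that domino, so the local rule gives it increment $-3$ (in the appropriate orientation), and the four edge increments around the square sum to $0$. The same bookkeeping works for a white square with orientations reversed. Hence the increment of $H$ around any unit square vanishes, and by the reduction the increment around any contractible loop vanishes.

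Finally, I would assemble the statement: given two lattice paths $\gamma,\gamma^{\prime}$ from $p$ to $p^{\prime}$ that are homotopic in $\Omega$, the concatenation $\gamma^{\prime}\gamma^{-1}$ is a contractible loop, so
\bb
H(p^{\prime},\gamma^{\prime}) - H(p^{\prime},\gamma) = \big(\text{increment of } H \text{ along } \gamma^{\prime}\gamma^{-1}\big) = 0,
\ee
using that increments along a path and its reverse are negatives of one another (immediate from the local rule) and that increments are additive under concatenation. This gives $H(p^{\prime},\gamma)=H(p^{\prime},\gamma^{\prime})$.

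The main obstacle I anticipate is the bookkeeping of signs and orientations in the single-square computation: one must be careful that ``counterclockwise around a black square'' versus ``clockwise around a white square'' in \hyperref[hdef]{Definition \ref{hdef}} is consistent with the black-on-the-left convention of \hyperref[defh]{Definition \ref{defh}} and with the $+1$ / $-3$ dichotomy, and that the marked edge genuinely always receives the $-3$ (rather than $+1$) branch because it is the edge a domino straddles. Once this one local identity is pinned down, the passage to arbitrary contractible loops is the standard discrete homotopy argument and carries no real difficulty; the multiply-connected aspect contributes nothing here since the monodromy piece of the definition has already absorbed the non-contractible part.
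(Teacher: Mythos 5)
Your proof is correct and follows essentially the same route as the paper: the paper observes that the lattice region bounded by two homotopic paths is simply-connected and invokes the well-definedness of the height function there (Thurston), while you unpack that citation into the explicit single-face check (three $+1$ non-marked edges plus one $-3$ marked edge summing to zero) followed by the standard discrete Stokes reduction over contractible loops. The only slip is cosmetic: in the single-square computation the marked edge is traversed counterclockwise together with the other three edges and contributes $-3$ in that orientation, rather than being ``traversed clockwise.''
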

\begin{proof}

It is an easy exercise on topology to show that a lattice domain $\Gamma_{\gamma}^{\gamma^{\prime}}$ obtained between $\gamma$ and $\gamma^{\prime}$ is simply-connected, for details see Chapter 1 in \cite{H}. Therefore, the height function on $\Gamma_{\gamma}^{\gamma^{\prime}}$ is well-defined. Thus, the value of $H$ at $p^{\prime}$ can be computed along both paths with the same result.
\end{proof}

Monodromy $M^{\Gamma}(\delta)$ is, in fact, fixed by $\pp\Gamma$ and one can write a formula for it along a single loop $\delta$.

Let $e$ be an edge of $\Gamma$ and define $\Delta_e$ to be $+1$ if $e$ has black square on its left and $-1$ otherwise. 
A single loop $\gamma$ divides $\ZZ^2$ into two connected components, $\Gamma^{\prime}$, the finite component, and $\Gamma^{\prime\prime}$, the infinite component. Denote the number of black squares in $\Gamma^{\prime}$ by $\mathcal{B}(\Gamma^{\prime})$ and the number of white square in $\Gamma^{\prime}$ by $\mathcal{W}(\Gamma^{\prime})$.

\begin{lemma}
The monodromy $M(\delta)$ of a height function $H$ on a lattice domain $\Gamma$ does not depend on $H$, and it can be computed by the following formula,

\bb
M(\delta)=\sum_{e\in\delta}\Delta_e= \mathcal{B}(\Gamma^{\prime})-\mathcal{W}(\Gamma^{\prime}).
\ee
\label{form}
\end{lemma}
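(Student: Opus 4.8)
The plan is to compute the monodromy $M(\delta)$ along a single loop $\delta$ directly from the Local Rule and then apply a discrete Green--Stokes type argument to identify the sum $\sum_{e\in\delta}\Delta_e$ with $\mathcal B(\Gamma')-\mathcal W(\Gamma')$. First I would recall that, by the Local Rule of \hyperref[hdef]{Definition \ref{hdef}}, along any edge $e$ traversed in a fixed orientation the height function changes either by $\Delta_e$ (if $e$ is not crossed by a domino / not a marked edge) or by $\Delta_e-4\,\mathrm{sgn}$ (if it is crossed). The key point — already implicit in the bijection between tilings and height functions — is that the monodromy is the total increment of $H$ once around $\delta$, and since $\delta$ is a closed loop the contributions of marked edges cancel in pairs: each domino contributes its marked edge once with $+$ orientation and once with $-$ orientation relative to the two squares it covers, so the net effect of the ``$-4$ vs $+1$'' dichotomy washes out around a closed loop. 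Hence $M(\delta)=\sum_{e\in\delta}\Delta_e$, which in particular shows $M(\delta)$ is independent of $D$: it depends only on the chessboard coloring of $\ZZ^2$ and on $\delta$.

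Next I would prove the second equality, $\sum_{e\in\delta}\Delta_e=\mathcal B(\Gamma')-\mathcal W(\Gamma')$, where $\Gamma'$ is the bounded component cut out by $\delta$. Here $\Delta_e$ is the ``curl density'' of the elementary flow $\omega$ on $\ZZ^2$ defined by: $\omega$ has value $+1$ around the boundary of each black square (counterclockwise) and $-1$ around each white square. The quantity $\sum_{e\in\delta}\Delta_e$ is precisely the circulation of the associated $1$-form along $\delta$ (with $\delta$ oriented as the boundary of $\Gamma'$), and by the discrete Stokes theorem on the simply-connected region $\Gamma'$ it equals the sum of the elementary curls over all faces enclosed, i.e. $(+1)$ for each black square in $\Gamma'$ and $(-1)$ for each white square, giving $\mathcal B(\Gamma')-\mathcal W(\Gamma')$. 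Care must be taken with the sign convention relating the orientation of $\delta$ to ``black square on the left,'' and with the fact that interior edges are shared by two faces (one black, one white) and cancel, so only the genuine boundary circulation survives — this is exactly the telescoping that makes discrete Stokes work.

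The additivity $M(\delta\cdot\delta')=M(\delta)+M(\delta')$ is then immediate since both the edge-sum and the area formula are additive under concatenation of loops (interior shared edges cancel), which also confirms that $M^\Gamma$ is a genuine homomorphism $\pi_1(\Omega)\to 4\ZZ$ as required in \hyperref[hdef]{Definition \ref{hdef}} — the value lands in $4\ZZ$ because in a tileable region $\mathcal B(\Gamma')$ and $\mathcal W(\Gamma')$ are forced to have an appropriate congruence, or alternatively because the monodromy is a multiple of $4$ by the same parity argument used in the simply-connected Local Rule. The main obstacle I anticipate is bookkeeping the orientations consistently: matching ``counterclockwise along black squares / clockwise along white squares'' in \hyperref[hdef]{Definition \ref{hdef}} with the orientation of $\delta$ as the positively-oriented boundary of the finite component $\Gamma'$, and verifying that the marked-edge (domino-crossing) corrections truly cancel around every closed loop rather than only around contractible ones. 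Once the sign conventions are pinned down, both equalities follow from a one-line discrete Stokes computation; everything else is routine.
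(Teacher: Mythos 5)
Your second step (discrete Stokes on the enclosed region $\Gamma^{\prime}$) is the right mechanism for the area formula, but your first step contains a genuine error. You claim that around any closed loop the marked-edge (domino-crossing) corrections ``cancel in pairs,'' so that the increment of $H$ around $\delta$ equals $\sum_{e\in\delta}\Delta_e$ for every loop. This is false, and you can see it on the smallest possible example: let $\delta$ be the boundary of a single interior black square $S$. Then $\sum_{e\in\pp S}\Delta_e=+4$, while the increment of $H$ around $\pp S$ is $1+1+1-3=0$ (three unmarked edges and one marked edge), consistent with $M(\pp S)=0$ for a contractible loop. A loop traverses each of its edges once, so a marked edge lying on $\delta$ has nothing to cancel against; the pairing you invoke is the one used to show well-definedness of $H$ on simply-connected regions (curl zero around each domino), not a statement about circulation along an arbitrary loop. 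Consequently the identity $M(\delta)=\sum_{e\in\delta}\Delta_e$ holds only for representatives of the homotopy class that avoid marked edges. The paper resolves this by first invoking homotopy invariance of the monodromy (\hyperref[homotopy]{Lemma \ref{homotopy}}) to shrink $\delta$ onto the connected boundary component $\pp\Gamma^{\prime}$, where by construction there are no marked edges (marked edges are assumed not to be boundary edges); only then is the circulation computed. Your proof needs this reduction, or some equivalent choice of a marked-edge-free representative, before the first equality can be asserted.

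A second, quantitative point: in your Stokes computation each enclosed black (resp.\ white) unit square contributes $+4$ (resp.\ $-4$) to the circulation of $\Delta$, not $\pm 1$ --- all four boundary edges of a black square traversed counterclockwise have the black square on their left. So the circulation equals $4\left(\mathcal{B}(\Gamma^{\prime})-\mathcal{W}(\Gamma^{\prime})\right)$, which is what the paper's displayed formula (\ref{mon_fromula}) gives (the factor $4$ is also what makes $M$ land in $4\ZZ$, as required by Definition \ref{hdef}); the statement of the lemma as printed omits this factor, and your write-up inherits that omission rather than correcting it. Your closing remark on additivity of $M$ under concatenation is fine once the two issues above are repaired.
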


\begin{proof}

The loop $\delta$ can be continuously shrunk to the connected boundary component $\pp\Gamma^{\prime}$. The increment of $H$ going along $\pp\Gamma^{\prime}$ can be computed using \hyperref[defh]{The Local Rule}, and, since there are no marked edges on $\pp\Gamma^{\prime}$, the value of $H$ changes by $\Delta_e=\pm 1$ only. Therefore, the monodromy can be written as follows, 

\bb
M(\delta)=\sum_{e\in\pp\Gamma^{\prime}} \Delta_{e}.
\ee

~The monodromy $M(\delta)$ divided by $4$ equals to the difference between $\mathcal{B}(\Gamma^{\prime})$ and $\mathcal{W}(\Gamma^{\prime})$.
\bb
M(\delta)=\sum_{e\in\pp\Gamma^{\prime}} \Delta_{e}=4\left(\mathcal{B}(\Gamma^{\prime})-\mathcal{W}(\Gamma^{\prime})\right)
\label{mon_fromula}
\ee

This formula follows from the Stokes theorem applied to the height function, see section 2 in \cite{K} and \cite{T}. 
\end{proof}
The formula from \hyperref[form]{Lemma \ref{form}} extends to an arbitrary loop as follows.
Suppose we have a loop $\delta\in\pi_1(\Omega)$ and a set of generators of $\pi_1(\Omega)), \{\alpha_i\}_{i=1}$. The loop $\delta$ can be decomposed into generators, $\delta=\prod_{i=1}\alpha_i^{a_i}$. From (\ref{mon_fromula}), we have a formula for monodromy around each $\alpha_i$. Suppose without loss of generality that $\alpha_i$ can be continuously shrunk to $\pp\Gamma_i$. Then, we can compute the monodromy $M(\delta)$ as follows,
\bb
M(\delta)=M\left(\prod_{i=1}\alpha_i^{a_i}\right)=\sum_{i=1}a_i\cdot M(\alpha_i)=~
\sum_{i=1}a_i\cdot M(\pp\Gamma_i)=4\sum_{i=1}a_i\cdot(\mathcal{B}(\Gamma_i^{\prime})-\mathcal{W}(\Gamma_i^{\prime})).
\ee

%\subsection{Definition of height function}
%Here we give a definition of height function on a multiply-connected region $\Omega_1:=\ZZ\cap \Omega$ for a domain $\Omega$. Note that $\pp\Omega$ consists of $g+1$ connected components (connected as a connected graph) $\pp\Omega=\bigsqcup_{i=0}^g \pp\Omega^i$, where $\pp\Omega^0$ is the external boundary of $\Gamma$. Pick a set of vertices $\{p_i|p_i\in\pp\Gamma^i\}_{i=0}^g$ and let $\{R_i\}_{i=0}^{g}$ be a sequence of integers. For later purpose, set $R_0=0$. Then, fix a set of homotopy types $\gamma_{p_i\to p_0}$ such that points $(p_i,\gamma_{p_i\to p_0})\in\mathcal{D}(\Omega)$. After that, assume that $p,v\in\Omega_1$ are vertices that are connected by an edge $(p,v)$. Also, let $M:\pi_1(\Omega)\to \ZZ$ be a monodromy, that is it satisfy the following property, $M(\gamma \gamma^{\prime})=M(\gamma)+M(\gamma^{\prime})$ and $M(\gamma^{-1})=-M(\gamma)$.

\subsection{Height function of a domino tiling}

A typical example of a height function can be obtained starting with a domino tiling $D$. We modify the usual definition of height function corresponding to a domino tilling from \cite{T,CKP,F} such that it becomes a well-defined function for a domino tiling of a multiply-connected domain.
\begin{definition}

Let $D$ be a domino tiling of a multiply-connected lattice domain $\Gamma$. Then, $H_D:\Gamma \to\ZZ$ is a height function of a domino tiling $D$ if it satisfies the following properties.

\begin{itemize}
\item Fix the value of $H_D$ at $(p_i,\gamma_i)$, $H_D(p_i,\gamma_i)=R_i$.
\item if an edge $(p,v)$, does not belong to any domino in $D$ (i.e. intersects) then $H_D(v,\gamma)=H_D(p,\gamma)+1$ if $(p, v)$ has a black square on the left, and $H_D(v,\gamma)= H_D(p,\gamma)-3$ otherwise.
\item going along a loop $\delta \in \pi _1(\Omega,p_0)$ $H_D(v,\gamma)$ changes as follows:
\bb
H_D(v,\delta\gamma_v)=H_D(v,\gamma_v)+M(\delta).
\ee
\label{monod}
\end{itemize}

\end{definition}

Now we are ready to give a proof of the following theorem,
\begin{theorem}
There is a bijection between the set of height functions on $\Gamma$ and domino tilings of it.
\label{bijection}
\end{theorem}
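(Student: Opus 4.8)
The plan is to construct the bijection explicitly in both directions and then check that the two constructions are mutually inverse. Given a domino tiling $D$ of $\Gamma$, I will produce its height function $H_D$ via \hyperref[monod]{Definition~\ref{monod}}: set $H_D(p_i,\gamma_i)=R_i$ at the reference points, and propagate the value along lattice edges using the local increments $\pm 1$ across edges not cut by a domino and $-3$ (equivalently $+1$ around the complementary three edges of a square) across edges cut by a domino. The first thing to verify is that this rule is consistent, i.e. that going around any elementary lattice square in $\Gamma$ the total increment is zero: each unit square is either covered by two halves of two distinct dominoes (one edge internal to a domino, three crossed — wait, rather one edge crossed) or lies inside a single domino; a short case check on the four local pictures shows the increments sum to $0$. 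Consistency around \emph{contractible} loops then follows from \hyperref[homotopy]{Lemma~\ref{homotopy}} (the region between two homotopic paths is simply-connected, so we reduce to the square-by-square check), and consistency around non-contractible loops is exactly the content of \hyperref[form]{Lemma~\ref{form}}: the increment around $\delta$ equals $M(\delta)=\sum_{e\in\delta}\Delta_e$, which matches the prescribed monodromy in \hyperref[monod]{Definition~\ref{monod}}. Hence $H_D\in\mathcal H(\Omega,M)$ with the stated boundary heights and height change, so the map $D\mapsto H_D$ is well-defined.

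\textbf{The inverse map.} Conversely, given a height function $H$ on $\Gamma$ in the sense of \hyperref[hdef]{Definition~\ref{hdef}}, I will reconstruct a domino tiling $D_H$. The key local observation is that, because $H$ changes by $\pm1$ along three of the four edges of every lattice square (the non-marked edges) and the local rule forces the fourth behaviour, at each unit square of $\Gamma$ exactly one of its edges is a ``jump'' edge where $|H(v)-H(p)|=3$; this jump edge determines which domino covers that square, and gluing the two squares sharing a jump edge yields a $1\times2$ or $2\times1$ tile. I must check (i) that this assignment covers every square exactly once with no overlaps — this follows because each domino corresponds to exactly one shared jump edge, and a jump edge is shared by precisely two squares of $\Gamma$ (a boundary or marked edge is never a jump edge by the definition of boundary heights and marked edges); and (ii) that the construction descends from $\tilde\Gamma$ to $\Gamma$, which holds because $H$ being quasiperiodic with monodromy in $4\ZZ$ means the local jump pattern is $\pi_1(\Omega)$-invariant (the monodromy shift is a constant, so it does not affect which edges are jumps).

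\textbf{Mutual inverseness.} Finally I will verify $D_{H_D}=D$ and $H_{D_H}=H$. The first is immediate from the local dictionary: the jump edges of $H_D$ are, by construction, exactly the edges crossed by dominoes of $D$, so reconstructing tiles from jump edges returns $D$. For the second, both $H_{D_H}$ and $H$ satisfy the same local rule on $\tilde\Gamma$, agree at the base point $p_0$ (value $0$), and have the same monodromy $M$; so they agree along every path from $p_0$, hence everywhere — this is where \hyperref[homotopy]{Lemma~\ref{homotopy}} and \hyperref[unique]{Proposition~\ref{unique}} (uniqueness of a quasiperiodic function from its local data) are invoked. One also checks the reference-point normalizations $H(p_i,\gamma_i)=R_i$ are preserved, which is automatic since the propagation rule is the same.

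\textbf{Main obstacle.} The routine parts are the finite case checks on the four local square configurations. The genuine subtlety, and the step I expect to require the most care, is the \emph{consistency / descent argument in the multiply-connected setting}: one must argue cleanly that the local rule, which a priori only defines $H$ along a path, produces a \emph{single} well-defined quasiperiodic function on $\tilde\Gamma$ with precisely the prescribed monodromy $M(\delta)=4(\mathcal B(\Gamma'_i)-\mathcal W(\Gamma'_i))$ — no more and no less — and that this monodromy is forced by $D$ rather than being an extra choice. This is handled by combining \hyperref[homotopy]{Lemma~\ref{homotopy}} (contractible loops) with \hyperref[form]{Lemma~\ref{form}} (the explicit monodromy formula on generators), after which \hyperref[unique]{Proposition~\ref{unique}} upgrades pointwise agreement on a fundamental domain to global equality. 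Once the topological bookkeeping is in place, the bijection itself is a formality.
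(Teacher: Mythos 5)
Your proposal is correct and takes essentially the same route as the paper: identify the marked/jump edges as dual to dominoes, reduce well-definedness of $H_D$ to \hyperref[homotopy]{Lemma~\ref{homotopy}} and \hyperref[form]{Lemma~\ref{form}}, and observe that the two constructions are inverse to each other (you merely spell out the consistency and mutual-inverseness checks that the paper leaves implicit). One terminological slip worth fixing: in the paper's conventions the marked edge of a square \emph{is} the jump edge where $H$ changes by $3$, so your parenthetical ``a boundary or marked edge is never a jump edge'' should say only that a \emph{boundary} edge is never a marked/jump edge, which is what guarantees every jump edge is interior and shared by exactly two squares.
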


\begin{proof}
The height function of a domino tiling defined ~\hyperref[monod]{Defenition \ref{monod}} obviously satisfies conditions of \hyperref[hdef]{Definition \ref{hdef}}, the marked edges are the edges that cross the dominoes.

In the other direction we need to assign a domino tiling to a height function $H$. Note that $H$ changes by $-3$ counterclockwise along the marked edge of $\mathcal{S}_b$, which is because there are three non-marked edges of $\mathcal{S}_b$ with increments $+1$. Therefore, there exists a unique edge on every lattice square with increment $3$. Moreover, each marked edge is, automatically, the marked edge of two adjacent squares of different colors. Thus, we can declare that the domino covers these two squares. This way we can cover by dominoes all the squares of $\Gamma$ and obtain a domino tiling.

One can further notice that marked edges are dual to the perfect matching on the dual graph $\Gamma^{\star}$ in language of the dimer model.
\end{proof}

\section{Proofs of properties of height functions}
\label{sect4}
We have two kinds of height functions, discrete height functions, which arise from domino tilings, and asymptotic height functions, which are natural limiting objects for sequences of height functions. Both height functions and their continuous counterparts share many properties. Thus, it is worth recalling a basic construction for a usual Lipschitz function $f$ \cite{V}.

Let $M$ be a metric space with the distance $d(\cdot,\cdot)$. Suppose that $X\subset M$ is a compact subset of $M$ and $f: X \to \RR$ is a Lipschitz function.
Then, $f$ can be extended to a Lipschitz function on $M$ with the same Lipschitz constant by the following formula:
\bb
\hat f(x)=\min_{y \in X}(f(y)+d(x,y))
\label{ext}
\ee
$\hat f$ is also the maximal extension of $f$ that is for any extension $g$ of $f$ we have $g\leq \hat f$. Furthermore, one can write a similar formula for the minimal extension, 
\bb
\check{f}(x):=\max_{y \in X}(f(y)+d(x,y)).
\ee
For quasiperiodic functions on $\Omega$ a similar formula holds, let $f:\pp\Tilde{\Omega}\to\RR$ be a function with monodromy $\mathfrak{m}$ and 
$(x,\gamma_x),(y,\gamma_y) \in\mathcal{D}(\Omega)$. Then, 
\bb
\hat f(x,\gamma_x)=\inf_{(y,\gamma_y) \in \pp\mathcal{D}(\Omega)}(f(y,\gamma_y)+d((x,\gamma_x),(y,\gamma_y)).
\ee

This gives a well-defined map on $\mathcal{D}(\Omega)$ since $\pp\Omega$ is a compact set, and thus the value $\hat f(x,\gamma_x)$. Then, we need to check that it also gives a well-defined quasi-periodic function defined on $\Tilde{\Omega}$ with the same monodromy data as $f$. Let us move $\hat f(x,\gamma_x)$ around a loop $\delta$. By definition, the expression $f(y,\gamma_x)+d((x,\gamma_x),(y,\gamma_y))$ changes to $f(y,\gamma\cdot\delta)+d((x,\gamma_x),(y,\gamma_y))+\mathfrak{m}(\delta)$ since $d((x,\delta\cdot\gamma_x),(y,\delta \cdot \gamma_y))=d((x,\gamma_x),(y,\gamma_y))$, and the change is independent of $y$. Therefore, we have the desired transformation, so $\hat f$ is a quasi-periodic function.

\subsection{Extension of a boundary height function.}
In the following discussion, we need functions that satisfy only the second condition of \hyperref[monod]{Definition~\ref{monod}}. Let us call them partial height functions. To prove that a partial height function $\eta$ is an actual height function, we need to check that $\eta$ has the right boundary conditions.
Recall an analog of the Lipschitz condition for height functions from \cite{CEP}: let $\Gamma$ be a lattice domain with a height function $H$ defined on it.
Let $\beta^{\Gamma}(p_1,p_2)$ be the length of a minimal path $\pi$ joining points $p_1$ and $p_2$ inside $\Gamma$ such that every edge of $\pi$ (oriented from $p_1$ towards $p_2$) has a black square on its left. Also, it is not hard to check that $\beta^{\Gamma}(p_1,p_2)$ satisfy the second condition of \hyperref[monod]{Definition~\ref{monod}} as function of $p_1$ with a fixed $p_2$. Thus, $\beta^{\Gamma}(p_1,p_2)$ is the maximal increment of the height function between $p_1$ and $p_2$($\beta^{\Gamma}$ is only increasing by $1$ on the path).

Then, for every two points $p_1,p_2 \in \Gamma$,
\bb
H(p_1)\leq H(p_2)\pm\beta^{\Gamma}(p_1,p_2) 
\label{Lipschitz_simply}
\ee
Let us also set $\beta(p,p):=0$.
Note that this condition implies a constraint for height change of a height function, $-\beta^{\Gamma}(x_i,x_0)\leq R_i\leq\beta^{\Gamma}(x_i,x_0)$.

\begin{figure}[h!]
    \centering
    \includegraphics[width=0.3\linewidth]{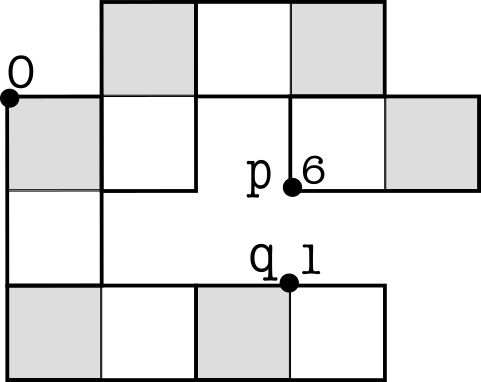}
    \caption{Example of a domino tiling of a concave domain, where the height function does not satisfy the Lipschitz condition for points $p$ and $q$ with $\beta$ defined by (\ref{beta_form}).}
    \label{concave}
\end{figure}

Call Condition (\ref{lattice_lipschitz}) \textit{the lattice Lipschitz condition}. For points $p_1,p_2$ at distance $d$ in sup-norm and a convex $\Gamma\subset\ZZ^2$, $\beta^{\Gamma}(p_1,p_2)\leq 2 d(p_1,p_2) + 1$ \cite{CEP}. Also, one can write an exact expression for $\beta(x,y)$ for such a $\Gamma$ taken from \cite{F} in the form of lemma $2.1$ from \cite{PST}.

Suppose $x=(a,b),y=(a+i,b+j)\in\ZZ^2$ and set $\kappa(i,j)=i-j~mod~2$. If $a-b=0~mod~2$, one sets
\bb
\beta (x,y)= \left\{
    \begin{split}
     2\| y-x \|_{\infty}+ \kappa (i,j),  \qquad \text{ if } i \geq j, \\
     2\| y-x \|_{\infty}- \kappa (i,j),  \qquad \text{ if } i < j.    \end{split}
  \right.
\ee
If $a-b=1 \mod 2$, then
\begin{equation*}
  \beta (x,y)=\left\{
    \begin{split}
     2\| y-x \|_{\infty}- \kappa (i,j), \qquad  \text{ if } i \geq j, \\
    2\| y-x \|_{\infty}+ \kappa (i,j), \qquad  \text{ if } i < j.    \end{split}
  \right.
  \label{beta_form}
\end{equation*}

We need several properties of the height function on a multiply-connected lattice domain that are crucial for the proofs on a simply-connected lattice domain. Let us recall them \cite{PST}, later we extend them to a multiply-connected lattice domain.

These properties are analogous to the basic properties of a usual Lipschitz function. Moreover, we will show that the height function is a discrete analog of the Lipschitz function in a sense that it approximates continuous Lipschitz functions after a suitable normalization on a large lattice domain, see \hyperref[density]{Theorem \ref{density}}.
In order to show it, we need the following lemma,

\begin{lemma}
Assume that $H_D$ and $H^{\prime}$ are height functions. Assume also that $p$ is a vertex of $\Gamma$.
Then, $H(p)=H^{\prime}(p)~mod~4$
\label{mod4}
\end{lemma}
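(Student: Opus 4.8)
The plan is to reduce the multiply-connected statement to the simply-connected one via the universal cover, and then to prove the simply-connected case by tracing the Local Rule along a path. First I would recall that both $H_D$ and $H'$ are, by Definition \ref{hdef}, functions on $\widetilde{\Gamma}$ satisfying the same local increment rule: along any non-marked edge the value changes by $\pm 1$, and along a marked edge (read counterclockwise around a black square) it changes by $-3$. In particular, modulo $4$, every edge increment equals $+1$ when the adjacent black square is on the left and $-1$ otherwise, regardless of whether the edge is marked and regardless of the tiling. So if I fix a path $\gamma$ in $\widetilde{\Gamma}$ from the common reference point $p_0$ (lifted to $(p_0,I)$) to $p$, then
\[
H_D(p) - H_D(p_0) \equiv \sum_{e\in\gamma}\Delta_e \pmod 4,
\qquad
H'(p) - H'(p_0) \equiv \sum_{e\in\gamma}\Delta_e \pmod 4,
\]
where $\Delta_e=\pm1$ as in Lemma \ref{form}. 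The right-hand sides are identical and depend only on $\gamma$ and the chessboard coloring, not on the tiling or on which function we chose.

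Next I would handle the normalization at the base point. By Definition \ref{hdef} the height change components satisfy $H(p_i,\gamma_i') = R_i$ with $R_0 = 0$ fixed for all tilings, and $H(p_0,I)=0$; so $H_D(p_0)=H'(p_0)=0$ and the two congruences above immediately give $H_D(p)\equiv H'(p)\pmod 4$. The only thing that needs care is that the congruence $H(p)-H(p_0)\equiv \sum_{e\in\gamma}\Delta_e$ is independent of the chosen path $\gamma$: this is exactly Lemma \ref{homotopy} combined with Lemma \ref{form}, since two paths differ by a loop $\delta$ and the monodromy $M(\delta)=\sum_{e\in\delta}\Delta_e$ is a multiple of $4$ (indeed $M:\pi_1(\Omega)\to 4\ZZ$), so the mod-$4$ reduction is unaffected. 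Thus the statement is well-posed on $\widetilde{\Gamma}$ and descends correctly to $\Gamma$.

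I expect the only mildly delicate point — the ``main obstacle,'' such as it is — to be the bookkeeping at the marked edges: one must check that the Local Rule's two cases ($+1$ and $-3$) are genuinely congruent mod $4$, and that a marked edge is marked consistently from the point of view of both the black and the white square it separates, so that summing increments along $\gamma$ is unambiguous. This is precisely the observation already used in the proof of Theorem \ref{bijection} (``$H$ changes by $-3$ counterclockwise along the marked edge of $\mathcal{S}_b$ because the three non-marked edges have increments $+1$''), so I would simply invoke it. Everything else is a one-line telescoping sum along $\gamma$, and the multiply-connected features (monodromy, choice of $\gamma$) are absorbed into $4\ZZ$ and hence invisible mod $4$.
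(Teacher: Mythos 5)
Your proposal is correct and follows essentially the same route as the paper: both arguments fix a lattice path from the common reference point and observe that every edge increment of a height function is congruent to $\Delta_e$ mod $4$ (since $+1\equiv-3$), so the telescoped sums agree; the paper phrases this as an induction on path length with the difference $H-H'$ having edge increments in $\{-4,0,4\}$. Your extra remark that path-independence mod $4$ is guaranteed because $M(\delta)\in 4\ZZ$ is a detail the paper leaves implicit, and it is a welcome addition.
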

\begin{proof}
By its definition, two height functions coincide at the reference point $(p_0,\gamma_0)$. Then values at an adjacent point $p^{\prime}$ may be either $H$ or $H\pm 4$ according to the definition of height function \hyperref[monod]{Definition~\ref{monod}}. Or in other words, the discrete gradient of the difference $H-H^{\prime}$ along any edge always takes its values in $\{-4,0,4\}$.

This way we can obtain the proof using induction with respect to the length of a path connecting the point $p$ with the reference point $p_0$.
\end{proof}

Another important property of the set of height functions is that it has additional operations, which turn this set into a lattice. Namely, we can take pointwise maximum or pointwise minimum of two such functions and obtain another element from this set. For a proof for a simply-connected domain, see discussion in section $1.4$ of \cite{CKP}. Since we use a modified height function, we need to show this property for the functions defined by \hyperref[defh]{Definition~\ref{defh}}.
Suppose that $\{p_i\}_{i=0}^{i=g}\in\pp\Gamma$ points on each connected boundary component of $\Gamma$ and $H$ and $H^{\prime}$ are two height functions on $\Gamma$ with height changes $\{R_i\}_{i=0}^g$ and $\{R^{\prime}_i\}_{i=0}^{g}$. Then,

\begin{lemma}
Pointwise maximum of two height functions $H, H^{\prime}$ defined on a lattice domain $\Gamma$ is again a height function $\check{H}$ defined as
\bb
\check{H}(p):=\max\{H(p),H^{\prime}(p)\}.
\ee
Further, height change of $\check{H}$ is maximum between height changes of $H, H^{\prime}$,
\bb
\check{R}_i=\max\{R_i, R^{\prime}_i\}.
\ee
\label{point}
\end{lemma}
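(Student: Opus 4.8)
The plan is to verify the five bullet conditions of \hyperref[monod]{Definition~\ref{monod}} (equivalently \hyperref[hdef]{Definition~\ref{hdef}}) for the pointwise maximum $\check H$, reducing everything to a local check on a single lattice square, exactly as in the simply-connected case treated in \cite{CKP}. The key local observation is the one already used in \hyperref[mod4]{Lemma~\ref{mod4}}: for any two height functions $H,H'$ and any edge $(p,v)$, the increments $H(v)-H(p)$ and $H'(v)-H'(p)$ are both equal to one of $+1,-3$ (on a non-marked vs. marked edge) and they differ by a multiple of $4$; combined with $H(p)\equiv H'(p)\ \mathrm{mod}\ 4$ at every vertex, this forces a rigid compatibility. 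First I would fix a black square $\mathcal S_b$ with vertices $p_1,p_2,p_3,p_4$ listed counterclockwise, and argue that the marked edge of $\mathcal S_b$ — the unique edge along which a height function drops by $3$ — is the same for $H$, for $H'$, and for $\check H$: indeed around $\mathcal S_b$ a height function increases by $+1$ on three edges and by $-3$ on the fourth, and the location of the $-3$ edge for $\check H$ is determined by where the maximum is attained, which I would show coincides with the common marked edge using that $H-H'$ is constant mod $4$ and changes by at most $4$ in absolute value per edge. A short case analysis (how many of the four vertices are ``$H$-dominant'') then shows $\check H$ increases by $+1$ counterclockwise on each non-marked edge of $\mathcal S_b$; the white-square case is symmetric with clockwise replaced for counterclockwise.

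Next I would handle the global/topological conditions. The monodromy condition is immediate: by \hyperref[form]{Lemma~\ref{form}} the monodromy $M(\delta)$ is the same for $H$ and $H'$ (it depends only on $\pp\Gamma$, not on the tiling), so for a point $v$ and loop $\delta$,
\bb
\check H(v,\delta\cdot\delta_v)=\max\{H(v,\delta_v)+M(\delta),\,H'(v,\delta_v)+M(\delta)\}=\check H(v,\delta_v)+M(\delta),
\ee
using that adding the same constant $M(\delta)$ to both arguments commutes with $\max$. In particular $\check H$ is a well-defined quasi-periodic function on $\tilde\Gamma$ with the correct monodromy. For the boundary heights, on each connected boundary component $\pp\tilde\Gamma_i$ both $H$ and $H'$ satisfy the same local increment rules (increase by $1$ counterclockwise along black squares, clockwise along white, except marked edges), and this is preserved by $\max$ by the same square-by-square argument as above applied to boundary squares; hence $\restr{\check H}{\pp\Gamma_i}(p,\delta)-\check H(p_i,\delta)$ again defines a valid family of boundary heights $\check B_i$. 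Finally, the normalization: $\check H(p_i,\gamma_i')=\max\{H(p_i,\gamma_i'),H'(p_i,\gamma_i')\}=\max\{R_i,R_i'\}=:\check R_i$, which simultaneously proves the asserted formula for the height change and, taking $i=0$, that $\check H(p_0,I)=\max\{0,0\}=0$.

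The main obstacle I expect is the local verification at a square where $H$ and $H'$ are not globally comparable, i.e.\ where the ``dominant'' function switches between adjacent vertices: one must rule out configurations in which $\check H$ would have two different $-3$ edges, or a $-3$ edge that is not the common marked edge, around a single square. This is where the two facts $H\equiv H'\ \mathrm{mod}\ 4$ and $|$increment$|\le 3$ with increments in $\{+1,-3\}$ are essential — together they imply that if $H(p)\le H'(p)$ and $H(v)>H'(v)$ for an edge $(p,v)$, then necessarily $H(v)-H(p)$ and $H'(v)-H'(p)$ are the corresponding equal-type increments after all, so no new marked edge is created. I would isolate this as the one genuine case-check (four cases by how many vertices are $H$-dominant, further split by adjacency pattern), citing section~1.4 of \cite{CKP} for the analogous simply-connected computation and noting that our modification changes nothing locally since the marked edges and the Local Rule increments are identical to the classical ones away from the cuts $\gamma_i$, which may be chosen to avoid the squares under consideration. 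The analogous statement for the pointwise minimum $\hat H$ with $\hat R_i=\min\{R_i,R_i'\}$ follows by the same argument (or by applying the result to $-H,-H'$ together with the color-reversal symmetry of the Local Rule).
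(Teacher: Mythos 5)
Your proposal follows essentially the same route as the paper: verify the local rule square by square using the mod-$4$ congruence of Lemma \ref{mod4} together with the bounded increments $\{+1,-3\}$, observe that the monodromy condition is automatic because $M(\delta)$ is determined by the domain alone (Lemma \ref{form}), and read off the boundary normalization and $\check R_i=\max\{R_i,R_i'\}$ by evaluating at the reference points. One intermediate assertion, however, is false as stated: the marked edge of a given square is \emph{not} in general the same for $H$ and $H'$, since these correspond to different tilings; e.g.\ in the case you isolate ($H(p)=H'(p)$, $H(v)=H'(v)+4$) one finds $H(v)-H(p)=+1$ while $H'(v)-H'(p)=-3$, so the edge is marked for $H'$ but not for $H$, contrary to your claim that the increments are of ``equal type.'' The correct statement — and the one the paper proves — is that on any square where $H$ and $H'$ differ at some vertex, the mod-$4$ rigidity forces one of them to dominate at \emph{all} vertices of that square, so $\check H$ coincides with the dominant function there and inherits its (single) marked edge; on squares where $H\equiv H'$ the marked edges trivially agree. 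With that substitution your argument closes and matches the paper's.
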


\begin{proof}

We need to show that $\check{H}$ satisfies four conditions from \hyperref[defh]{Definition~\ref{defh}}.
To show the second and the third properties we can use the same arguments as in \cite{CKP,PST}. That is supposing that $H(v)\neq H^{\prime}(v)$ for a point $v\in\Gamma$ (if there is no such a point, it would imply $\check{H}=H=H^{\prime}$). Then, $H(v)$ must differ from $H^{\prime}(v)$ by a multiple of $4$ by \hyperref[mod4]{Lemma \ref{mod4}}. Without loss of generality, suppose that $H(v)=H^{\prime}(v)+4\ell$ for $\ell\in\mathbb{N}^{\star}$. Together with the definition of height function, this implies that at all points $u$ adjacent to $v$, $H(u) \leq H^{\prime}(u)$. Therefore, $\check{H}$ at points $v$ and $u$ coincides with $H$. Repeating this argument we obtain the desired.

For the fourth condition note that monodromy of a height function is fixed by the domain by \hyperref[form]{Lemma \ref{form}}.

Finally, let us prove the first condition. Note that the height change of $\check{H}$ equals the values of $\check{H}$ at the reference points on the $\pp\Gamma$, $\check{R}=\{\check{R}_i\}_{i=0}^g=\check{H}(p_i)_{i=0}^g$. These values are maxima between the values of $H$ and $H^{\prime}$ at reference points. Thus, height change of $\check{H}$ equals to the maximum between height change of $H$ and $H^{\prime}$, $\check{R}_i=\max\{R_i,R_i^{\prime}\}$.

Here we assumed that two height functions are defined on $\Tilde{Gamma}$ and coincide at the reference point $(p_0,\gamma_0)$. However, the proof also works with the assumption that they coincide modulo $4$ that allows us to use \hyperref[mod4]{Lemma \ref{mod4}}. Further, monodromies of all height functions are the same since monodromy is determined by the lattice domain by \hyperref[mon_formula]{Lemma \ref{mon_fromula}}.

\end{proof}
Moreover, the same works for pointwise minimum of the height functions, pointwise minimum of height functions $H$ and $H^{\prime}$ is, again, the height function, denote it $\Tilde{H}$,
\bb
\Tilde{H}(p)=\min\{H(p),H^{\prime}(p)\}.
\ee

Height function on a multiply-connected domain satisfy a refined lattice Lipschitz condition,
\bb
 H(x,\hat \delta)-H(y,\delta_{y}) \leq \beta^{\Gamma}(x,y)+M(\hat \delta\cdot\delta_y^{-1}).
\label{lattice_lipschitz}
\ee
This condition can be obtained by applying \hyperref[Lipschitz_simply]{Condition \ref{Lipschitz_simply}} to a height function on$\mathcal{C(\Omega)}$. This constraint allows us to formulate a criterion of extension of boundary values to a height function as follows,

\begin{proposition}
Let $\Gamma$ be a multiply-connected domain with a fixed height function $H$ with monodromy $M$ on a subset $\bar{\Gamma}\subset\Gamma$.
Then, $H$ admits an extension to a height function on $\Gamma$ if the following inequality holds for all points $x$, $y$ of $\bar{\Gamma}$,

\bb
 H(x,\hat \delta)-H(y,\delta_{y}) \leq \beta^{\Gamma}(x,y)+M(\hat \delta\cdot\delta_y^{-1})
\label{Height_Lipschitz}
\ee
\label{crith}
\end{proposition}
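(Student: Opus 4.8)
The plan is to prove Proposition \ref{crith} by a standard greedy-extension argument, carried out point-by-point over $\Gamma$, using the maximal (or minimal) extension formula from the beginning of Section \ref{sect4} adapted to the discrete lattice Lipschitz condition \eqref{lattice_lipschitz}. Concretely, I would first lift the problem to the universal cover: a height function on $\Gamma$ is, by Proposition \ref{unique} and Proposition \ref{iso}, the same data as a quasiperiodic function on $\Tilde\Gamma$ with monodromy $M$, so it suffices to produce an extension of $H$ from $\bar\Gamma$ to $\mathcal D(\Gamma)$ (the fundamental lattice domain), and then propagate it to all of $\Tilde\Gamma$ by the rule $H(v,\delta\cdot\delta_v)=H(v,\delta_v)+M(\delta)$. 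The key point is that the propagation is consistent precisely because $M$ is a homomorphism $\pi_1(\Omega)\to 4\ZZ$, and because \eqref{Height_Lipschitz} is invariant under the $\pi_1(\Omega)$-action (both sides pick up $M(\delta)$ when $\hat\delta\mapsto\delta\cdot\hat\delta$, and $\beta^\Gamma$ is unchanged), so the hypothesis on $\bar\Gamma$ automatically upgrades to the corresponding hypothesis on all $\pi_1(\Omega)$-translates of $\bar\Gamma$ inside $\Tilde\Gamma$.

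Next I would do the core extension on $\mathcal D(\Gamma)$, which is simply-connected, so the monodromy plays no role there and \eqref{Height_Lipschitz} reduces to the ordinary lattice Lipschitz inequality $H(x)-H(y)\le\beta^\Gamma(x,y)$. I would define the candidate extension by the discrete analog of \eqref{ext}:
\bb
\hat H(x):=\min_{y\in\bar\Gamma}\bigl(H(y)+\beta^\Gamma(y,x)\bigr).
\ee
One checks: (i) $\hat H$ agrees with $H$ on $\bar\Gamma$ — the inequality \eqref{Height_Lipschitz} is exactly what guarantees the minimum at $x\in\bar\Gamma$ is attained by $y=x$ and not beaten by any other $y$; (ii) $\hat H$ is integer-valued since $\beta^\Gamma$ is; (iii) $\hat H$ itself satisfies the lattice Lipschitz condition, because $\beta^\Gamma$ does (triangle-type inequality $\beta^\Gamma(y,x')\le\beta^\Gamma(y,x)+\beta^\Gamma(x,x')$ and the fact that $\beta^\Gamma$ increases by exactly $1$ along its defining path); and (iv) crucially, $\hat H$ satisfies the \emph{local rule}, i.e. along each edge $(p,v)$ with a black square on the left the increment $\hat H(v)-\hat H(p)$ lies in $\{+1,-3\}$. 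Property (iv) follows from (iii) together with Lemma \ref{mod4}: the lattice Lipschitz bound forces $|\hat H(v)-\hat H(p)|\le\beta^\Gamma$ which along a single edge is $1$ in one orientation and $3$ in the other, and the mod-$4$ rigidity of height functions (which applies here since $\hat H$ coincides with a genuine height function modulo $4$, as established inside the proof of Lemma \ref{point}) pins the increment down to the admissible value $+1$ or $-3$.

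Then I would assemble the pieces: $\hat H$ on $\mathcal D(\Gamma)$ extends by quasiperiodicity to a function on $\Tilde\Gamma$; it satisfies the local/marked-edge rules on every translate $\delta\cdot\mathcal D(\Gamma)$ because these rules are $\pi_1$-equivariant and $M$ is a homomorphism; the boundary-height and reference-point conditions of Definition \ref{hdef} hold because $\bar\Gamma$ was assumed to carry the correct boundary data and $\hat H$ restricts to $H$ on $\bar\Gamma$; and by Lemma \ref{form} the monodromy of any height function is forced by $\partial\Gamma$ and equals $M$, so there is no obstruction from that direction. Hence $\hat H$ is a bona fide height function on $\Gamma$ extending $H$.

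The main obstacle I anticipate is verifying step (iv) carefully — that the Lipschitz-type extension $\hat H$ automatically obeys the sharp local rule (increments in $\{+1,-3\}$ rather than merely $|{\cdot}|\le\beta^\Gamma$), since the naive extension formula only delivers the coarse inequality. The resolution is the mod-$4$ structure of height functions (Lemma \ref{mod4}) applied to the difference of $\hat H$ with any fixed background height function on the simply-connected $\mathcal D(\Gamma)$, exactly as in the pointwise-max argument of Lemma \ref{point}; the other subtlety, patching across the cuts $\upsilon_i$ of the fundamental domain, dissolves once one notes the defining inequality is $\pi_1(\Omega)$-invariant and $\beta^\Gamma$ on $\Tilde\Gamma$ restricts compatibly to each copy of the cut. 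A secondary technical point worth spelling out is the triangle inequality for $\beta^\Gamma$ on a possibly non-convex $\Gamma$, which is where one uses that $\beta^\Gamma(p_1,p_2)$ is the length of a minimal monotone-admissible path and satisfies the second condition of Definition \ref{monod} as a function of $p_1$.
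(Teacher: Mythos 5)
Your proposal is correct and follows essentially the same route as the paper: the extension is the maximal one $\min_{y\in\bar\Gamma}\bigl(H(y)+\beta^\Gamma(x,y)\bigr)$, agreement with $H$ on $\bar\Gamma$ comes from the hypothesis \eqref{Height_Lipschitz} in one direction and $\beta^\Gamma(x,x)=0$ in the other, and the result is a height function because each $\check H_y:x\mapsto H(y)+\beta^\Gamma(x,y)$ satisfies the local rule and pointwise minima of such partial height functions remain height functions (Lemma \ref{point}). Your extra care with the fundamental-domain reduction, $\pi_1$-equivariance, and the mod-$4$ pinning of the edge increments only makes explicit what the paper leaves implicit.
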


\begin{proof}

Let us take a family of height functions on $\Gamma$, $\{ \check{H}_y: x \mapsto H(y)+\beta(x,y) \}$ indexed by the points $x$ of $\bar{\Gamma}$.
The pointwise minimum of two partial height function from the family is again a  partial height function by \hyperref[point]{Lemma \ref{point}} since $\check{H}_y(x)$ satisfies the local rule as a function of $x$. Then, taking the pointwise minimum over the whole family, we get a height function on $\Gamma$

\bb
H^{\max}(x,\gamma_x):=\min_{y \in  \bar{\Gamma}}(H^{\prime}(y,\gamma_y))
\ee

We need to show that $H^{\max}$ agrees with $H$ on $\bar{\Gamma}$.

Let $x,y,v\in\bar{\Gamma}$, and let us take a function $H^{\prime}_x:y\mapsto H(y)+\beta^{\Gamma}(x,y)$. It is clear that $H(x)\leq H(y)+\beta^{\Gamma}(x,y)$ holds for every $y$ since it is the lattice Lipschitz condition for $H$, therefore we have $H(v)\leq H^{\max}(v)$. Inequality $H^{\max}(v)\leq H(v)$ also holds because for $y=x$ we have the equality since $\beta^{\Gamma}(x,x)=0$.

The height change and monodromy of $H^{\max}$ are given by the boundary condition $H$.

\label{minmax}
\end{proof}
Note that due to (\ref{Height_Lipschitz}) $H^{\max}$ is the maximal extension of $H$ to a height function on $\Gamma$. The minimal extension of $H$ can be constructed by almost the same way as the maximal. Let us define the minimal extension $H^{min}$,
\bb
H^{\min}(x,\gamma):=\max_{y \in \bar{\Gamma}}(H(y,\gamma)-\beta(x,y)).
\ee

\subsection{Asymptotic height function}

%Let us define intrinsic sup-norm on functions on $\Omega$. $\norm f^{\Omega}:=\inf_{\gamma} \int_{\gamma} |\gamma^{\dot}(t)|_{\infty}dt$. 

Here we define an asymptotic height function as a continuous counterpart to 
the discrete height function.
We saw that discrete height functions satisfy the lattice Lipschitz condition. Therefore, it is natural to expect that in a scaling limit they approximate continuous Lipschitz functions. However, as the example on \hyperref[concave]{Figure \ref{concave}} suggests we need to consider functions that are 2-Lipschitz in intrinsic sup norm, which can be defined by the following formula

\bb
\norm{x-y}:=\inf_{\gamma}\int_{0}^{1} \norma{\gamma^{\prime}(t)}dt.
\ee
Here the curve $\gamma$ connects $x$ to $y$ by as a path inside $\Omega$, $\gamma(0)=y, \gamma(1)=x$. Moreover, let us use such a path of the minimal length w.r.t. the intrinsic distance on $\Omega$.

Let us note that theorems for the standard sup norm such as Arzela Ascoli theorem and Rademacher's theorem hold for intrinsic norm as well, as one can check using the same arguments as for standard sup norm locally (i.e. by using partition of unity with convex support).

Fix a set of points $\{(x_i,\gamma)\in\mathcal{D}(\Omega)|x_i\in\pp\Omega_i\}$ and a monodromy data, i.e., a map $\mathfrak{m}:\pi_1\to\RR$ such that $\mathfrak{m}(\gamma\cdot \gamma^{\prime})=\mathfrak{m}(\gamma)+\mathfrak{m}(\gamma^{\prime})$ and $\mathfrak{m}(\gamma^{-1})=-\mathfrak{m}(\gamma)$.
Also, let $r:=\{r_i\}_{i=0}^{g}$ be a sequence of real numbers and denote point $z=(z_1,z_2) \in \Omega$. 

Also define the Newton polygon, which is the set of allowed slopes for the height function arising from domino tilings, $\mathcal{N}:=\{(x,y)\in\RR^2| |x|+|y|\leq 2\}$ \cite{CKP,KOS}

A function $\mathfrak{h}$ is an asymptotic height function with height change $r=\{r_i\}_{i=1}^g$ and monodromy $\mathfrak{m}$ if the following holds,

\begin{itemize}
    \item $\restr{\mathfrak{h}}{\pp\Omega_i}(x)-\mathfrak{h}(x_i)=\mathfrak{b}_i(x)$
    \item $\mathfrak{h}(x_i,\gamma_i)=r_i,$
    \item $\mathfrak{h}$ is $2$-Lipschitz function with respect to the intrinsic sup norm on $\Omega$, that is
    
    $|\mathfrak{h}(x,\delta)-\mathfrak{h}(y,\delta^{\prime})|\leq 2 \norm{x-y}+\mathfrak{m}(\delta^{-1}\delta^{\prime})$ and $\nabla \mathfrak{h}:\Omega\to\mathcal{N}$
    \item after going around a loop $\delta^{\prime}$, values of $\mathfrak{h}$ changes as follows:\\
    $\mathfrak{h}(x,\delta^{\prime}\cdot\delta)=\mathfrak{h}(x,\delta)+\mathfrak{m}(\delta^{\prime}).$
\end{itemize}

It is important to mention that numbers $r_i$ for asymptotic height function take their values in intervals due to the Lipschitz constraint, $-2\norm{x_i -x_0}\leq r_i\leq 2\norm{x_i -x_0}$.

Let $\mathscr{H}(\Omega,\mathfrak{m},\mathfrak{b}_r)$ be the space of asymptotic height functions with monodromy $\mathfrak{m}$ and boundary condition $\mathfrak{b}_r$. Also define a union of $\mathscr{H}(\Omega,\mathfrak{m},\mathfrak{b}_r)$ over all possible height changes $r$,  $\mathscr{H}(\Omega,\mathfrak{m},\mathfrak{b}):=\bigcup_r \mathscr{H}(\Omega,\mathfrak{m},\mathfrak{b}_r)$.

\begin{theorem}
The spaces $\mathscr{H}(\Omega,\mathfrak{b}, \mathfrak{m})$ and $\mathscr{H}(\Omega,\mathfrak{b}_r, \mathfrak{m}, r)$
are compact spaces with respect to the intrinsic sup norm.
\label{comp}
\end{theorem}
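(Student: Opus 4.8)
The plan is to establish compactness of $\mathscr{H}(\Omega,\mathfrak{m},\mathfrak{b})$ (the arbitrary-height-change case being the harder one, since $\mathscr{H}(\Omega,\mathfrak{m},\mathfrak{b}_r,r)$ is a closed subset of it cut out by the extra constraints $\mathfrak{h}(x_i,\gamma_i)=r_i$) by showing that any sequence $\{\mathfrak{h}_n\}$ in the space has a subsequence converging uniformly to an element of the space. Since every $\mathfrak{h}_n$ is quasiperiodic with the \emph{same} fixed monodromy $\mathfrak{m}$, by Proposition~\ref{unique} each $\mathfrak{h}_n$ is determined by its restriction to the fundamental domain $\mathcal{D}(\Omega)$, and $\mathcal{D}(\Omega)$ is compact. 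So it suffices to work on $\mathcal{D}(\Omega)$: extract a uniformly convergent subsequence of $\{\restr{\mathfrak{h}_n}{\mathcal{D}(\Omega)}\}$ and then extend the limit back to $\Tilde\Omega$ by quasiperiodicity.

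First I would check equicontinuity: each $\mathfrak{h}_n$ is $2$-Lipschitz with respect to the intrinsic sup norm on $\Omega$, and on the compact set $\mathcal{D}(\Omega)$ the intrinsic distance restricted from $\Tilde\Omega$ is comparable to (indeed bounded by a fixed multiple of) the ambient distance, so the family $\{\restr{\mathfrak{h}_n}{\mathcal{D}(\Omega)}\}$ is uniformly Lipschitz, hence equicontinuous. Next, uniform boundedness: we have normalized $\mathfrak{h}_n(x_0,\mathcal{I})=0$, and the Lipschitz bound $|\mathfrak{h}_n(x,\delta)-\mathfrak{h}_n(x_0,\mathcal{I})|\le 2\norm{x-x_0}+\mathfrak{m}(\delta\cdot\mathcal{I}^{-1})$ applied on $\mathcal{D}(\Omega)$, where $\delta$ ranges only over the finitely many loop-classes needed to reach points of $\mathcal{D}(\Omega)$ from the base point (in fact the trivial class for the standard construction of $\mathcal{D}(\Omega)$), gives a uniform bound $|\mathfrak{h}_n|\le C$ on $\mathcal{D}(\Omega)$ with $C$ depending only on $\operatorname{diam}_\Omega$ and $\mathfrak{m}$. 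By the Arzelà–Ascoli theorem for the intrinsic norm (which the paper has already noted holds), a subsequence $\mathfrak{h}_{n_k}$ converges uniformly on $\mathcal{D}(\Omega)$ to some continuous $\mathfrak{h}$.

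It then remains to verify that the limit $\mathfrak{h}$, extended to $\Tilde\Omega$ via $\mathfrak{h}(x,\delta\cdot\hat\delta):=\mathfrak{h}(x,\hat\delta)+\mathfrak{m}(\delta)$ for $(x,\hat\delta)\in\mathcal{D}(\Omega)$, lies in $\mathscr{H}(\Omega,\mathfrak{m},\mathfrak{b})$ — i.e. that the defining properties are closed under uniform limits. The quasiperiodicity relation and the $2$-Lipschitz inequality pass to the limit immediately since they are closed conditions; the boundary condition $\restr{\mathfrak{h}}{\pp\Omega_i}(x)-\mathfrak{h}(x_i)=\mathfrak{b}_i(x)$ passes to the limit because each $\partial\Omega_i\cap\partial\mathcal{D}(\Omega)$ is contained in the compact $\mathcal{D}(\Omega)$ on which convergence is uniform; and the gradient constraint $\nabla\mathfrak{h}_{n_k}:\Omega\to\mathcal{N}$ passes to the limit because $\mathcal{N}$ is convex and closed, so a uniform limit of functions with a.e.\ gradient in $\mathcal{N}$ again has a.e.\ gradient in $\mathcal{N}$ (this is the standard fact that the set of $1$-Lipschitz-type functions with gradient in a fixed convex body is weak-$*$ closed, combined with Rademacher in the intrinsic norm, which the paper has also invoked). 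Finally, the extended $\mathfrak{h}$ has the correct (arbitrary, in the first space; prescribed $r_i$, in the second) height change: in the second space, $\mathfrak{h}(x_i,\gamma_i)=\lim\mathfrak{h}_{n_k}(x_i,\gamma_i)=r_i$, so the limit stays in $\mathscr{H}(\Omega,\mathfrak{m},\mathfrak{b}_r,r)$, giving its compactness as well.

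The main obstacle I expect is purely bookkeeping around the non-compactness of $\Tilde\Omega$: one must be careful that "uniform convergence" is asserted on the compact piece $\mathcal{D}(\Omega)$ and then \emph{defined} everywhere else through the fixed monodromy, rather than hoped for directly on $\Tilde\Omega$ — and correspondingly that the boundary pieces $\upsilon_i$ of $\partial\mathcal{D}(\Omega)$ are matched up consistently under the $\pi_1$-action so that the extended limit is genuinely well-defined (this is exactly what Proposition~\ref{unique} and the decomposition-of-paths discussion are for). Everything else is a routine application of Arzelà–Ascoli plus closedness of the convex gradient constraint.
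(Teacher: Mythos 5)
Your proposal is correct and follows essentially the same route as the paper: reduce to the fundamental domain via Proposition~\ref{unique}, apply Arzel\`a--Ascoli there (uniform bound from the normalization at the base point plus the Lipschitz condition, equicontinuity from $2$-Lipschitzness), and obtain the fixed-height-change space as a closed subset. You spell out the verification that the defining conditions (quasiperiodicity, boundary data, gradient in $\mathcal{N}$) are closed under uniform limits in more detail than the paper does, but this is the same argument.
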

\begin{proof}
The idea of the proof is to show compactness of the space of functions on the fundamental domain that will lead us to compactness of $\mathscr{H}(\Omega,\mathfrak{b}, \mathfrak{m})$. Then, $\mathscr{H}(\Omega,\mathfrak{b}_r, \mathfrak{m}, r)$ is compact as a closed subset of $\mathscr{H}(\Omega,\mathfrak{b}, \mathfrak{m})$. 

Recall that by \hyperref[unique]{Proposition  \ref{unique}}, the behavior of these functions is determined by their values on $\mathcal{D}(\Omega)$. Thus, it is sufficient to show compactness of $\mathscr{H}(\mathcal{D}(\Omega),\mathfrak{b})$. In order to show it, one can apply Arzela-Ascoli theorem. The first requirement, the existence of a uniform bound for the functions $f\in\mathscr{H}(\mathcal{D}(\Omega),\mathfrak{b})$, is satisfied because of the boundary condition $\mathfrak{b}$ that is fixed on $\pp\Omega_0$. The equicontinuous follows directly from the Lipschitz condition.
Then, these function with a given monodromy data $\mathfrak{m}$ form a closed subspace of $\mathscr{H}(\mathcal{D}(\Omega),\mathfrak{b})$. Thus, it is a compact space. The same works for the subspace with a given height change $r$ and a fixed monodromy data $\mathfrak{m}$, $\mathscr{H}(\Omega,\mathfrak{b}_r, \mathfrak{m}, r)$. The latter is, again, a compact space as a closed subset of $\mathscr{H}(\Omega,\mathfrak{b}, \mathfrak{m})$.
\end{proof}

Note that the gradient of an asymptotic height function is well-defined on $\OO$. Further, pointwise difference between two quasi-periodic functions with the same monodromy is, again, a well-defined function on $\OO$

\subsection{Approximations of a domain}

Assume that $\Omega\subset\RR^2$ is a domain and $\Gamma_N$ tends to $\Omega$ as $N\to\infty$ with respect to the Hausdorff distance $d_{H}$, $d_{H}(X,Y)=\inf\{\varepsilon \geq 0:  X\subseteq Y_{\varepsilon }\ \text{ and } \ Y\subseteq X_{\varepsilon }\}$, where  $X_{\varepsilon }$ is $\varepsilon$-neighborhood of $X$, $d_H(\Omega,\Omega_{N}) \to 0,$ as $N \to \infty$.

Now, let $\Gamma_N\ \subset \frac{1}{N}\ZZ^2\cap \Omega$ be a sequence of lattice domains with normalized boundary conditions $\{B_N\}$. Let us follow ideas from \cite{CKP,KOS,AG} and call a sequence of regions with boundary conditions $(\GT_{N},B_N)$ \textit{an approximation} of $(\Omega,\mathfrak{b})$  if

\begin{enumerate}
\item $\GT_{N} \subset \frac{1}{N} \ZZ^2 \cap \Omega$, where $\frac{1}{N} \ZZ^2$ is $\ZZ^2$ with mesh $\frac{1}{N}$.
\item each $\GT_{N}$ admits at least one domino tiling with normalized boundary condition $B_{N}^{R_N}$ for each non-trivial $R_N=\{R_N^i\}_{i=1}^g$.
\item for every admissible asymptotic height change $r$, there exists a sequence of admissible normalized height changes $\{R_N\}$ that converges to $r$, $R_N^i\to r^i$ as $N\to\infty$.
\item  $\Gamma$ tends to $\OO$ with respect to the Hausdorff distance $d_H$, $d_H(\Omega,\Gamma) \to 0,$ as $N \to \infty$.
\item
Further, $|B_N^{R_N}(x_N)- \mathfrak{b}^r (x)| \leq O(N^{-1})$ for the standard Euclid distance for sufficiently large $N$ with $x_N\in\pp\Gamma, x\in \pp\Omega$ such that $|x_N-x|\leq O(N^{-1})$ (the existence of such points is guaranteed by the previous assumption). 
\end{enumerate}

Furthermore, note that the convergence of boundary conditions means that the discrete monodromy data $\{\frac{1}{N}M_i\}$ converges to the continuous $\{m_i\}$.

These conditions are modifications of conditions of Theorem $1.1$ from \cite{CKP}, which are needed for multiply-connected domains. Convergence of the boundary conditions implies convergence of discrete monodromy $M_N(\delta)$ to its continuous counterpart $\mathfrak{m}(\delta)$.

\subsection{Convergence of the maximal extensions}

In this section we prove convergence of discrete norm $\beta^{\Gamma}(x,y)$ to continuous intrinsic norm $\norm{x-y}$.
Firstly, we need to normalize $\beta$,
\bb
\beta^{\Gamma_N}_N:=\frac{\beta^{\Gamma}}{N}.
\ee
For $\Gamma=\ZZ^2$ and convex domains we have an exact formula for $\beta$ (\ref{beta_form}), which tells that $\beta(x,y)$ approximates sup norm up to a factor $\pm 1/N$. For other domains it still holds.

Let $\Gamma_N$ be an approximation of a domain with a boundary condition $\Omega$. Then,

\begin{proposition}
$\exists C>0$ such that $\forall p,q \in\Gamma_N$ viewed as points of $\Omega$, the following holds  
\bb    
    |\beta^{\Gamma_N}_N(p,q) - \norm{p-q}|\leq\frac{C}{N}
\ee 
\end{proposition}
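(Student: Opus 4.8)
The plan is to reduce the claim to three pieces: a lower bound $\beta^{\Gamma_N}_N(p,q)\geq\norm{p-q}-C/N$, an upper bound $\beta^{\Gamma_N}_N(p,q)\leq\norm{p-q}+C/N$, and the observation that it suffices to control $\beta$ along a near-geodesic. First I would recall that $\beta^{\Gamma_N}$ is the length of a shortest lattice path from $p$ to $q$ all of whose oriented edges keep a black square on the left; such a path exists because $\Gamma_N$ is tileable, and by the estimate quoted from \cite{CEP} one has $\beta^{\Gamma}(p_1,p_2)\leq 2d_\infty(p_1,p_2)+1$ on any convex piece of the lattice, hence on $\frac{1}{N}\ZZ^2$ the normalized $\beta_N$ is $2d_\infty/N$ up to $O(1/N)$ \emph{locally}.

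For the upper bound I would take a continuous path $\gamma$ in $\Omega$ realizing (up to $\varepsilon$) the intrinsic distance $\norm{p-q}$, cut it into $O(N\norm{p-q})$ segments each of sup-length $O(1/N)$ and each contained in a convex sub-box of $\Omega$ (possible for $N$ large since $\pp\Omega$ is piecewise smooth, so $\Omega$ is locally convex up to curvature corrections of order the segment size). On each segment I replace $\gamma$ by an admissible lattice sub-path using the local estimate $\beta^{\mathrm{box}}\leq 2 d_\infty + 1$ and the exact formula (\ref{beta_form}); concatenating, the accumulated ``$+1$'' errors are one per segment but they telescope because consecutive admissible sub-paths can be glued with the parity bookkeeping of $\kappa$, leaving a single additive $O(1)$, i.e.\ $O(1/N)$ after normalization. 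One must check the sub-paths can be joined into one globally admissible path inside $\Gamma_N$ (not just inside the union of boxes): this uses that $\Gamma_N$ approximates $\Omega$ in Hausdorff distance, so for $N$ large the $1/N$-tube around $\gamma$ lies in $\Gamma_N$.

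For the lower bound I would go the other way: given the optimal admissible lattice path $\pi$ of length $\beta^{\Gamma_N}(p,q)$, view it as a continuous curve in $\Omega$; its Euclidean length is at least $N\norm{p-q}$ (it connects $p$ to $q$ inside $\Omega$), and each unit lattice edge contributes length $1/N$, so $\beta^{\Gamma_N}_N(p,q)=\tfrac{1}{N}\#\{\text{edges}\}\geq \tfrac1N\cdot N\norm{p-q}=\norm{p-q}$; here the $\ell_\infty$ versus path-length comparison only costs the local $\kappa$-parity term, bounded by $1$, giving the $C/N$ slack. Finally, combining with Proposition~\ref{crith} and the maximal-extension formula $H^{\max}(x)=\min_y(H(y)+\beta^{\Gamma}(x,y))$, the uniform convergence $\beta^{\Gamma_N}_N\to\norm{\cdot-\cdot}$ immediately yields convergence of the discrete maximal (and, symmetrically, minimal) extensions to the continuous ones of (\ref{ext}). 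The main obstacle is the gluing step in the upper bound: ensuring that the locally-constructed admissible sub-paths assemble into one globally admissible lattice path lying inside $\Gamma_N$ while only paying an $O(1)$ total penalty — this is where the piecewise-smoothness of $\pp\Omega$, the Hausdorff convergence $\Gamma_N\to\Omega$, and the parity structure of the Local Rule all have to be used simultaneously, and it is the step I expect to require the most care.
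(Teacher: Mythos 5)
Your overall strategy coincides with the paper's: take a path realizing the intrinsic distance, cover it by convex pieces on which the exact formula (\ref{beta_form}) is available, and add up the errors. The paper's proof is exactly this, done with a family of $C$ \emph{macroscopic} open balls $\mathcal{B}_1,\dots,\mathcal{B}_C$ lying strictly inside $\Omega$, where $C$ depends only on $\Omega$ and the chosen geodesic, not on $N$; each ball contributes an error of at most $1/N$ after normalization, whence the total $C/N$.

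The genuine gap in your version is the granularity of the decomposition. You cut the geodesic into $O(N\norm{p-q})$ segments of sup-length $O(1/N)$ and pay a ``$+1$'' (unnormalized) per segment from the bound $\beta\leq 2d_\infty+1$. That is $O(N)$ unnormalized error, i.e.\ $O(1)$ after dividing by $N$ --- an order of magnitude worse than the claimed $C/N$. Everything then hinges on your assertion that these errors ``telescope'' via the parity bookkeeping of $\kappa$, and this is precisely the statement you do not prove. It is not automatic: $\beta(x,y)-2\|y-x\|_\infty\in\{-1,0,+1\}$ with the sign and magnitude depending on the parities of the endpoints, and concatenating near-optimal admissible sub-paths through $O(N)$ intermediate lattice points chosen near a continuous curve gives no a priori cancellation; near a concave portion of $\pp\Omega$ you may not even be free to adjust the intermediate points to force $\kappa=0$ while staying inside $\Gamma_N$. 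The fix is simply to not create the problem: keep the number of convex pieces bounded independently of $N$ (macroscopic balls or boxes along the geodesic), apply the exact formula once per piece, and use subadditivity of $\beta$ under concatenation for the upper bound; then the total error is (number of pieces)$\times\frac{1}{N}$. Your lower bound via the length of the optimal admissible path viewed as a curve in $\Omega$ is fine in spirit (and matches the statement as literally written), though note that to recover the normalization used elsewhere in the paper, where $\beta_N\approx 2\|\cdot\|_\infty$, the crude ``number of edges $\geq$ length'' count loses a factor of $2$ and one should instead again invoke the exact formula on each convex piece, which yields both bounds simultaneously.
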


\begin{proof}
    Let us take a path $\gamma$ connecting $x$ to $y$ that minimizes integral from $\norm{x-y}$. Then, cover $\gamma$ by a family of open balls $\mathcal{B}_i\subset\Omega$ that lie strictly inside $\Omega$, $\gamma\subset \bigcup_{i=1}^C \mathcal{B}_i$, let $C$ be their number. On each ball we have the convergence of $\beta_N(x,y)$ to $\norm{x-y}$ by formula (\ref{beta_form}), which might give an error $\frac{1}{N}$ on each $\mathcal{B}_i$. Thus, we have the desired estimate on $\Omega$ up to an error of $\frac{C}{N}$.
\end{proof}

~Let domain $\OO$ be a domain with a~fixed boundary condition $\mathfrak{b}:\pp\Tilde{\OO}\to\RR$. Further, let ($\Gamma_N,B_N)$ be an approximation of $(\OO,\mathfrak{b})$, and let $H_N^{\max}$ be the maximal extension of $B_N$. Then, $H_N^{\max}$ approximates the maximal extension of $\mathfrak{b}$ as $N\to\infty$,

\begin{proposition}
Then for sufficiently large $N$ and $x\in\Gamma_N$ viewed as point of $\Omega$,
\bb
|H_N^{\max}(x) - h^{\max}(x)| \leq O(N^{-1})
\ee
\label{maxext}
\end{proposition}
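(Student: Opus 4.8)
The plan is to compare the discrete max-extension formula $H_N^{\max}(x,\gamma_x)=\min_{y\in\pp\Gamma_N}(B_N(y,\gamma_y)+\beta^{\Gamma_N}(x,y))$ with the continuous one $h^{\max}(x,\gamma_x)=\inf_{y\in\pp\Omega}(\mathfrak{b}(y,\gamma_y)+2\norm{x-y})$ term by term, using the just-proved estimate $|\beta^{\Gamma_N}_N(p,q)-\norm{p-q}|\le C/N$ and the approximation hypothesis $|B_N^{R_N}(y_N)-\mathfrak{b}^r(y)|\le O(N^{-1})$ for nearby boundary points. Since both $H_N^{\max}$ and $h^{\max}$ are quasi-periodic with the same (suitably normalized) monodromy data, by Proposition~\ref{unique} it suffices to prove the estimate for $x$ lying in the fundamental domain $\mathcal{D}(\Omega)$, where the relevant boundary pieces of $\pp\mathcal{D}(\Omega)$ form a compact set and the infima are attained (or nearly attained). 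This reduction is what makes the non-compactness of $\Tilde\Omega$ harmless.

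The key steps, in order: (1) Reduce to $x\in\mathcal{D}(\Gamma_N)$ using quasi-periodicity and the fact that an extra loop adds the same monodromy increment $M(\delta)/N\to\mathfrak{m}(\delta)$ to both sides. (2) Fix $x$ and let $y^\star\in\pp\Omega$ (with its path) be a point essentially realizing the infimum in $h^{\max}(x)$; choose a lattice boundary point $y_N\in\pp\Gamma_N$ with $|y_N-y^\star|\le O(N^{-1})$ — such a point exists by the Hausdorff convergence $d_H(\Omega,\Gamma_N)\to 0$ and the approximation axioms. Then
\bb
H_N^{\max}(x)\le B_N(y_N)+\beta^{\Gamma_N}_N(x,y_N)\le \mathfrak{b}(y^\star)+2\norm{x-y^\star}+O(N^{-1})\le h^{\max}(x)+O(N^{-1}),
\ee
where the middle inequality uses $|B_N(y_N)-\mathfrak{b}(y^\star)|\le O(N^{-1})$, the $\beta$-approximation proposition, and $2$-Lipschitz continuity of $x\mapsto\norm{x-y}$ to absorb the $O(N^{-1})$ displacement of $y_N$ from $y^\star$. (3) For the reverse inequality, let $y_N\in\pp\Gamma_N$ realize the discrete minimum for $H_N^{\max}(x)$, pick $y\in\pp\Omega$ with $|y-y_N|\le O(N^{-1})$, and run the same chain of inequalities backwards to get $h^{\max}(x)\le \mathfrak{b}(y)+2\norm{x-y}\le B_N(y_N)+\beta^{\Gamma_N}_N(x,y_N)+O(N^{-1})=H_N^{\max}(x)+O(N^{-1})$. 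Combining (2) and (3) gives $|H_N^{\max}(x)-h^{\max}(x)|\le O(N^{-1})$ uniformly in $x\in\mathcal{D}(\Gamma_N)$, hence everywhere by step (1).

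I expect the main obstacle to be the matching of near-boundary lattice points with continuum boundary points in a way that is simultaneously uniform in $x$ and compatible with the path/monodromy bookkeeping: one must verify that the chosen $y_N$ can be taken on the correct sheet of $\Tilde\Gamma_N$ (i.e., with a path homotopic to the one attaining the continuum infimum), and that the ``intrinsic'' distance $\norm{x-y}$ — which requires minimizing over paths inside $\Omega$ — is genuinely approximated by the lattice quantity $\beta^{\Gamma_N}_N$ even when the minimizing path runs close to $\pp\Omega$ or near a cut $\upsilon_i$. The preceding $\beta$-approximation proposition was stated for points that can be joined by a chain of balls lying strictly inside $\Omega$, so a small technical point is handling the case where $x$ or $y$ lies on $\pp\Omega$ itself; this is dealt with by a routine limiting/continuity argument using the $2$-Lipschitz bound and the piecewise-smoothness of $\pp\Omega$. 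Everything else is a bookkeeping exercise combining the two quantitative estimates already established.
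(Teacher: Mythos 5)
Your proof is correct and follows essentially the same route as the paper: both compare the discrete and continuous max-extension formulas term by term, using the estimate $|\beta^{\Gamma_N}_N(x,y)-2\norm{x-y}|\leq C/N$ together with the $O(N^{-1})$ convergence of the boundary conditions. The paper's own proof is much terser (it passes through the hybrid object $\mathfrak{h}_{\#,N}^{\max}$ and asserts the comparison in one line), so your explicit two-sided argument with matched near-optimal boundary points simply fills in the details the paper omits.
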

Note that there is a lattice analog of the formula of the maximal extension, where we take the minimum is taken over points of $\partial \Gamma$ instead of $\pp \Omega$.
Recall the expression of maximal extension of $\mathfrak{b}$,
\bb
\mathfrak{h}^{\max}(x):=\min_{y\in \pp\mathcal{D}(\Omega)}(
\mathfrak{b}(y)+2\|(x-y)\|_{\infty})
\ee
\label{max_height}
the lattice analog of it is the following,
\bb
\mathfrak{h}_{\#,N}^{\max}(x):=\min_{y\in \pp\mathcal{D}(\Gamma)}(
\mathfrak{b}(y)+2\|(x-y)\|_{\infty}).
\ee
By the Lipschitz condition, $\mathfrak{h}_{\#,N}^{\max}$ approximates $h^{\max}$ up to an error of order $O(N^{-1})$.
Denote approximations obtained this way by subscript $\#,N$. Now, let us prove \hyperref[maxext]{Proposition \ref{maxext}}.

\begin{proof}
The maximal extension of $B_N$ is
\bb
H_N^{\max}(x):=H^{\max}/N=\min_{y \in \pp\mathcal{D}(\Gamma)}(B_N(y)+\beta_N(x,y)).
\ee
It approximates $\hg(x)$ up to $O(N^{-1}$) due to the fact that $|\beta_N(x,y)-2\|(x-y)\|_{\infty}|^{\Omega}\leq \frac{C}{N}$ for $C>0$ independent of $x,y$. Thus, it approximates $\mathfrak{h}^{\max}$.
\end{proof}

\subsection{Density lemma}
Now we are ready to prove the density Lemma, In the proof we use the following tautological way to express a Lipschitz function in terms of its own values,
\bb
\mathfrak{h}(x):=\min_{y \in \mathcal{D}(\Omega)}(\mathfrak{h}(y)+2\|(x-y)\|_{\infty}^{\Omega}).
\label{support123}
\ee
We use the lattice version of the above expression for the lattice $\frac{1}{N}\ZZ^2$ that gives us an approximation of the function.
\bb
\mathfrak{h}_{\#,N}(x):=\min_{y \in \mathcal{D}(\Gamma)_N^{\#}}(\mathfrak{h}(y)+2\|(x-y)\|_{\infty}^{\Omega})
\label{support12}
\ee
It is clear that $\mathfrak{h}_{\#,N}$ approximates $h$ to within an error of order of $O(N^{-1})$ due to the Lipschitz condition.

\begin{theorem}[The Density lemma]
Let $(\Gamma_N, B_{N} )$ be an approximation of $(\Omega, \mathfrak{b})$.
Then, $\exists C>0$ such that for every $\mathfrak{h} \in \mathscr{H}(\Omega,\mathfrak{b})$ there exists a sequence of normalized height functions $H_N$, such that $\norm{\mathfrak{h}-H_{N} }\leq \frac{C}{N}$.

Vice versa, $\exists C^{\prime}>0$ such that for every normalized height function $H_{N}$ on $(\Gamma_N,B_N)$ there exists an asymptotic height function $\mathfrak{h} \in \mathscr{H}(\Omega,\mathfrak{b})$ such that $\norm{\mathfrak{h}-H_{N}} \leq N^{-1}C^{\prime}$.
\label{density}
\end{theorem}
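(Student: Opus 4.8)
The plan is to prove the two directions separately, both hinging on the representation of a Lipschitz function by its "min-plus" support formula and on the convergence of the discrete distance $\beta^{\Gamma_N}_N$ to the intrinsic norm $\norm{\cdot-\cdot}$ established in the preceding propositions. Throughout, by \hyperref[unique]{Proposition \ref{unique}} it suffices to control everything on a fundamental domain $\mathcal{D}(\Omega)$ and its lattice counterpart $\mathcal{D}(\Gamma_N)$, since a quasiperiodic function with fixed monodromy is determined there; the monodromy data match by the approximation hypothesis (convergence of $B_N$ forces $\frac1N M_N \to \mathfrak{m}$), so the two functions being compared live over the same covering with the same transformation law.

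\emph{From asymptotic to discrete.} Given $\mathfrak{h}\in\mathscr{H}(\Omega,\mathfrak{b})$, I would first sample it on the lattice: restrict $\mathfrak{h}$ to $V(\Gamma_N)\cap\mathcal{D}(\Gamma_N)$ and round each value to the nearest element of $\frac{1}{N}\cdot 4\ZZ$ compatible with the parity/coloring constraint (so that the candidate is consistent with \hyperref[mod4]{Lemma \ref{mod4}}), then propagate by the monodromy rule to all of $\Tilde\Gamma_N$. Call this boundary-and-interior data $\eta_N$; it differs from $\mathfrak{h}$ by $O(N^{-1})$ pointwise by construction. The key point is that $\eta_N$ satisfies the refined lattice Lipschitz condition \eqref{Height_Lipschitz}: indeed $\mathfrak{h}$ obeys the $2$-Lipschitz bound $|\mathfrak{h}(x,\hat\delta)-\mathfrak{h}(y,\delta_y)|\le 2\norm{x-y}+\mathfrak{m}(\hat\delta\delta_y^{-1})$, and since $\beta^{\Gamma_N}_N(x,y)= \norm{x-y}+O(N^{-1})$ while $\frac1N M_N = \mathfrak{m}+O(N^{-1})$, the rounded function $\eta_N$ satisfies the same inequality with the discrete $\beta$ on the right, up to an $O(N^{-1})$ slack that one absorbs. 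Then \hyperref[crith]{Proposition \ref{crith}} produces an honest height function $H_N$ extending $\eta_N$ (take for instance its maximal extension $H_N^{\max}$ of the boundary data); since $\mathfrak{h}$ itself, being $2$-Lipschitz, is close to its own min-plus representation \eqref{support123} and $H_N^{\max}$ is close to the lattice min-plus representation \eqref{support12} by \hyperref[maxext]{Proposition \ref{maxext}}, comparing the two min-plus formulas term by term (same base values up to $O(N^{-1})$, distances equal up to $O(N^{-1})$) gives $\norm{\mathfrak{h}-H_N}\le C/N$.

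\emph{From discrete to asymptotic.} Conversely, given a normalized height function $H_N$, I would define $\mathfrak{h}$ to be the piecewise-linear (or, cleaner, the min-plus) interpolation of the values of $H_N$ on $\mathcal{D}(\Gamma_N)$: set $\mathfrak{h}(x):=\min_{y\in V(\Gamma_N)\cap\mathcal{D}(\Gamma_N)}\bigl(H_N(y)+2\norm{x-y}\bigr)$ and extend quasiperiodically using the monodromy $\frac1N M_N$ (which is within $O(N^{-1})$ of a legitimate continuous monodromy $\mathfrak{m}$; one may need to adjust $\mathfrak{m}$ slightly, or argue that the approximation hypotheses fix $\mathfrak{m}$ a priori). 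This $\mathfrak{h}$ is automatically $2$-Lipschitz for $\norm{\cdot-\cdot}$ and its gradient lies in $\mathcal{N}$ because the discrete slopes of $H_N$ lie in the lattice Newton polygon and min-plus interpolation does not enlarge the slope set; the boundary condition is matched up to $O(N^{-1})$ because $B_N$ is within $O(N^{-1})$ of $\mathfrak{b}$. Since $H_N$ satisfies the lattice Lipschitz bound \eqref{lattice_lipschitz}, one has $H_N(x)\le H_N(y)+\beta^{\Gamma_N}_N(x,y)$, so at lattice points the interpolant $\mathfrak{h}$ agrees with $H_N$ up to the $O(N^{-1})$ discrepancy between $\beta^{\Gamma_N}_N$ and $2\norm{\cdot-\cdot}$; between lattice points the $2$-Lipschitz property of both functions controls the gap by $O(N^{-1})$. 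Hence $\norm{\mathfrak{h}-H_N}\le C'/N$.

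\emph{Main obstacle.} The routine parts are the min-plus bookkeeping and the $O(N^{-1})$ error accounting. The genuinely delicate step is the first direction: one must verify that the rounded, lattice-sampled data $\eta_N$ really does satisfy the \emph{exact} discrete inequality \eqref{Height_Lipschitz} (not merely up to slack), since \hyperref[crith]{Proposition \ref{crith}} requires it on the nose before it will output a height function. This forces care in how the rounding to $\frac1N\cdot 4\ZZ$ interacts with the parity of $\beta$ (recall the explicit formula \eqref{beta_form} has $\pm\kappa(i,j)$ corrections): one should round conservatively, e.g. down along the orientation that $\beta$ measures, so that the rounding error always helps rather than violates the inequality, and one must check this is simultaneously consistent at all pairs $(x,y)$ — equivalently, that the rounded boundary data is itself extendable, which is where the compactness of the fundamental-domain picture and the uniformity of the constant $C$ in \hyperref[maxext]{Proposition \ref{maxext}} are used. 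A secondary subtlety is ensuring the monodromy/height-change of the constructed object matches (in the first direction) or converges (in the second); this is handled by noting that both $R_N$ and $\mathfrak{m}$ are pinned down by the boundary data up to the same $O(N^{-1})$ error, and by invoking assumption (3) in the definition of an approximation to realize any target height change as a limit of admissible discrete ones.
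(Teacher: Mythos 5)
Your proposal follows essentially the same route as the paper: in the first direction the paper's construction is exactly your "maximal extension of the rounded sampled data," written as $\hat H_N(x,\gamma):=\min_{y}(\lfloor\mathfrak{h}(y,\gamma_y)\rfloor_{p_0}+\beta_N(x,y))$ with the rounding adjusted for the mod-$4$ compatibility of Lemma~\ref{mod4}, and the comparison with $\mathfrak{h}$ goes through the same min-plus representations (\ref{support123})--(\ref{support12}) and the $O(N^{-1})$ convergence of $\beta_N$ to the intrinsic norm; in the second direction the paper interpolates $H_N$ at even lattice points, a minor variant of your min-plus interpolant. The one step you omit is the paper's final ``balancing'' $H_N:=\max\bigl(H^{N}_{\min},\min(H^{N}_{\max},\hat H_N)\bigr)$ against the minimal and maximal extensions of $B_N$: the maximal extension of your rounded data $\eta_N$ has boundary values equal to the rounded restriction of $\mathfrak{h}$, which is only $O(N^{-1})$-close to $B_N$, whereas the theorem is used downstream (e.g.\ in the lower bound of Theorem~\ref{Functional}) to produce height functions of actual tilings of $(\Gamma_N,B_N)$, so the boundary values must be pinned to $B_N$ exactly. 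This is a one-line fix within your own framework (you already have $H^{\max}_N$ and $H^{\min}_N$ from Proposition~\ref{maxext}, and the sandwich moves the function by at most $O(N^{-1})$ since $\hat H_N$ is already that close to $\mathfrak{h}$), but without it the constructed $H_N$ need not lie in the set of height functions the lemma is meant to populate.
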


\begin{proof}
The strategy is to use (\ref{support123}) and (\ref{support12}) for the function $\mathfrak{h}$ and its lattice analog for discrete height functions.

Recall that a function $c^{\prime}_y: x\mapsto c^{\prime}+\beta(x,y)$ satisfies the local rule as a function of $x$.
Let us define the partial height function that approximates $h$. For it let us take an infimum over all height functions that are above $\mathfrak{h}$, one way of doing it is to take such a height function at every lattice point, and then take infimum over these functions.
\bb
\hat{H}_{N}(x,\gamma):=\min_{y \in \mathcal{D}(\Gamma)}(\floor*{\mathfrak{h}(y,\gamma_y)}_{p_0}+\beta_N(x,y)),
\ee
where $\floor*{\mathfrak{h}(y,\gamma)}_{p_0}$ means the following: first, we need to take the integer part of $N\times \mathfrak{h}(y,\gamma)$, second we subtract the fractional part of $N\times (h(p_0,\gamma)-\beta(p_0,x))$ modulo $4$, so that the value of height functions under the infimum coincide modulo $4$ at point $p_0$ as they should by \hyperref[mod4]{Lemma \ref{mod4}}, and second we divide it by $N$. These modifications change $\hat{H}_N$ by an error of order $\frac{Const}{N}$, and guarantee that all the lattice operations are well-defined, thus we obtain the height function.

Clearly $\hat{H}_N$ approximates $\mathfrak{h}_{\#,N}$ up to $O(N^{-1})$ (see (\ref{support12})), and thus approximates $\mathfrak{h}$. Note that so far, $\hat{H}_N$ is just a function on vertices of $\Tilde{\Gamma}_N$ with some boundary conditions that satisfy the local rule from \hyperref[defh]{Definition \ref{defh}}. However, we need to check all five properties from \hyperref[hdef]{Definition \ref{hdef}} to obtain the desired height function, so far we have only two of them, the third and the fourth properties. The rest three can be in fact fixed by the right boundary conditions since all these three properties are basically governed by the boundary conditions.

We can “balance” $\hat{H}_N$ between the maximal and the minimal extensions of $B_N$ that may change the height function only by $O(N^{-1})$ due to the fact that $\hat{H}_{N}$ is fit to $\mathfrak{h}$ to within $O(N^{-1})$. After it, we have the desired normalized height function $H_{N}$
\bb
H_{N}:=\max(H_{\min}^N, (\min(H_{\max}^N, \hat H_{N}))).
\ee

The proof of the second part of the statement easier, let us build such a function $\hat{\mathfrak{h}} $ that is simply a linear interpolation of values of $H_N$ at even points (points with even coordinates). We take even points because of the jumps of the height function by $\pm 3$.

\end{proof}

\section{The Concentration Lemma}
\label{sect5}

In this section we prove a concentration inequality for height functions on $\Tilde{\Omega}$. (We suppose that $\Omega$ is a domain, $\mathfrak{b}:\pp\Omega\to\RR$ can be extended to an asymptotic height function on $\Omega$, and $(\Gamma_N, B_N)$ is an approximation of $(\Omega,\mathfrak{b})$)
Also, recall that $\avhn$(p) is the expectation value of $H_N(p)$, then the concentration inequality is the following statement:

\begin{lemma}[The Concentration Lemma]
$\exists \ell(\Omega)>0$ such that  $\forall C>0$ and for sufficiently large $N$
\label{Concentration_lemma}
\bb
\mathbb{P}_N\left(\norma{\hn -\avhn} > C\right)< \exp\left( -\frac{C^2 N}{\ell(\Omega)}\right).
\ee
\label{proba}
\end{lemma}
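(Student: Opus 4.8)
The plan is to follow the classical martingale concentration argument of Cohn--Elkies--Propp, as adapted by Cohn--Kenyon--Propp, now working with height functions on the universal cover. The central point is that, by \hyperref[bijection]{Theorem~\ref{bijection}}, a uniformly random domino tiling of $\Gamma_N$ corresponds to a uniformly random height function $H_N$ on $\Tilde{\Gamma}_N$, and by \hyperref[mod4]{Lemma~\ref{mod4}} and the refined lattice Lipschitz condition~(\ref{lattice_lipschitz}) any two such height functions differ by a well-defined function on $\Omega$ taking values in $4\ZZ$, bounded locally in terms of $\beta^{\Gamma_N}$. Since the monodromy $M$ is fixed by the domain (\hyperref[form]{Lemma~\ref{form}}), changing the tiling does not change the monodromy data, so all the randomness lives in the "genuine function" part, and we may work on the fundamental domain $\mathcal{D}(\Gamma_N)$, which is a lattice domain of area $O(N^2)$ inside a compact region.

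First I would fix a vertex $p$ of $\Tilde{\Gamma}_N$ (equivalently, of $\mathcal{D}(\Gamma_N)$ together with a loop, but $H_N(p,\hat\delta)$ differs from $H_N(p,\mathcal{I})$ only by the deterministic $M(\hat\delta)$, so it suffices to treat $p\in\mathcal{D}(\Gamma_N)$). Enumerate the tiles (or, better, the vertices of the dual graph / the faces of $\Gamma_N$) in some order $f_1,\dots,f_m$ with $m=O(N^2)$, and build the Doob martingale $X_k=\mathbb{E}[\,H_N(p)\mid \text{restriction of }D\text{ to }f_1,\dots,f_k\,]$, so that $X_0=\avhn(p)$ and $X_m=H_N(p)$. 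The key estimate is a bounded-difference bound: $|X_k-X_{k-1}|\le c$ for an absolute constant $c$. This is where one uses that local modifications of a domino tiling (elementary flips, or more precisely the effect of conditioning on one more face) change the height function at any given point by a bounded amount --- here one must invoke that two tilings of $\Gamma_N$ agreeing outside a region $R$ have height functions differing by at most $\operatorname{diam}_\beta(R)$ on $R$ and by a telescoping/path argument the conditional expectations differ by $O(1)$; this follows from the lattice Lipschitz condition~(\ref{lattice_lipschitz}) exactly as in the simply-connected case once we note $\beta^{\Gamma_N}$ and $M$ behave the same on $\mathcal{D}(\Gamma_N)$. Then Azuma--Hoeffding gives
\bb
\mathbb{P}_N\bigl(|H_N(p)-\avhn(p)|>t\bigr)\le 2\exp\!\left(-\frac{t^2}{2c^2 m}\right),
\ee
and with $m\le c'(\Omega)N^2$ and $t=CN$ this becomes $\exp(-C^2N/\ell(\Omega))$ for a suitable $\ell(\Omega)$.

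Next I would upgrade the pointwise bound to the sup-norm bound. Since $H_N$ is $\beta$-Lipschitz and $\beta^{\Gamma_N}_N$ is comparable to the intrinsic sup norm up to $O(N^{-1})$, it suffices to control $|H_N(p)-\avhn(p)|$ on an $O(N^{-1})$-net of $\mathcal{D}(\Omega)$ of cardinality $O(N^2)$ --- between net points both $H_N$ and $\avhn$ vary by $O(1)$, which is absorbed by adjusting constants and taking $N$ large. A union bound over the net multiplies the probability by $O(N^2)$, which is swallowed by the exponential $\exp(-C^2N/\ell(\Omega))$ after enlarging $\ell(\Omega)$ (or shrinking $C$ by an arbitrarily small factor and absorbing into "sufficiently large $N$"). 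This yields exactly the statement $\mathbb{P}_N(\norma{\hn-\avhn}>C)<\exp(-C^2N/\ell(\Omega))$.

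The main obstacle I anticipate is the bounded-difference step: one must verify carefully that conditioning on one additional face of $\Gamma_N$ changes the conditional expectation of the height at a fixed point by an absolute constant, \emph{uniformly in the topology of $\Omega$}. In the simply-connected case this is standard (a disagreement percolation / path-swapping argument bounding the height difference of two tilings by the $\beta$-distance along a path joining the point to the disagreement region), but here one must be sure that the argument is insensitive to which sheet of the cover one is on --- this is guaranteed because the difference of two height functions is a genuine bounded function on $\Omega$ (\hyperref[mod4]{Lemma~\ref{mod4}}), so the relevant paths can be taken inside a single fundamental domain and the monodromy never enters. A secondary technical point is the precise ordering of faces so that the martingale is well-defined and the filtration is increasing; any fixed enumeration works, and the constant $c$ does not depend on it. I expect no difficulty with compactness issues since, although $\Tilde{\Omega}$ is non-compact, the Concentration Lemma is a statement about $H_N$ on the compact set $\mathcal{D}(\Omega)$ (the rest being determined by the deterministic monodromy), as we reduced to at the outset.
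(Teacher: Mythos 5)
Your overall architecture (Doob martingale plus Azuma, then a union bound over $O(N^2)$ lattice points absorbed into the exponential, with the multiply-connected issues handled by noting that the monodromy is deterministic and reducing to the fundamental domain) matches the paper, and the topological part of your argument is sound. However, there is a genuine quantitative gap at the central step: you build the martingale by revealing the tiling face by face over the whole domain, so your filtration has $m=O(N^2)$ steps, each with a bounded-difference constant $c=O(1)$. Azuma--Hoeffding then gives $\mathbb{P}(|H_N(p)-\avhn(p)|>t)\le 2\exp(-t^2/(2c^2m))$, and with $t=CN$ and $m\le c'N^2$ the exponent is $-C^2N^2/(2c^2c'N^2)=-C^2/(2c^2c')$, a constant independent of $N$. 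This is \emph{not} $\exp(-C^2N/\ell(\Omega))$; it does not even tend to zero as $N\to\infty$ for fixed $C$, so it cannot yield the concentration (nor the later convergence in probability) claimed in the lemma. The arithmetic in the sentence ``with $m\le c'(\Omega)N^2$ and $t=CN$ this becomes $\exp(-C^2N/\ell(\Omega))$'' is simply wrong.

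The fix is the filtration actually used in Theorem 21 of \cite{CEP} and reproduced in the paper: fix a lattice path $(x_0,\dots,x_m)$ from the boundary (where $H$ is deterministic) to the point $p$, of length $m\approx \ell'(\Omega,p)N=O(N)$, and let $\mathcal{F}_k$ be generated by the values $H(x_i)$, $i\le k$. The Doob martingale $M_k=\mathbb{E}(H(p)\mid\mathcal{F}_k)$ still has bounded increments, but now there are only $O(N)$ of them, so Azuma gives $\exp(-C^2N^2/O(N))=\exp(-C^2N/\ell)$, which is the correct $N$-dependence. Everything else in your write-up (the use of \hyperref[mod4]{Lemma~\ref{mod4}} and the fixed monodromy of \hyperref[form]{Lemma~\ref{form}} to ensure the path can be taken in a single fundamental domain, and the final union bound) then goes through as in the paper; the only required change is to index the martingale along a boundary-to-point path rather than over all faces.
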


\begin{proof}
The idea of the proof is to deduce \hyperref[proba]{Lemma \ref{Concentration_lemma}} from the concentration for a simply-connected domain proved in Theorem 21 \cite{CEP}. Let us recall it.

Suppose that $\Gamma$ is a tileable connected simply-connected lattice domain with boundary values of height function $B$, uniform measure $\mathbb{P}^{\Gamma}$ on the set of domino tilings of $\Gamma$.
Take a point $p\in\Gamma$ in the interior of $\Gamma$ such that there is a lattice path from $p$ to $\pp\Gamma$ with $m$ vertices.
The concentration inequality from Theorem 21 in \cite{CEP} is the following statement:

$\forall a>0$,
\bb
\mathbb{P}^{\Gamma}_N \left(| H(p)- \overline{H}(p)|> a \cdot \sqrt[]{m}\right) < 2 \exp(-a^2/32).
\label{concentr}
\ee

Recall the proof of this result.
Let us fix a path $(x_0,\ldots,x_m)$ with $m+1$ vertices from $\pp\Gamma$ to the $p$. Then, the authors considered a filtration of $\sigma$ algebras $\mathcal{F}_k$ generated by outcomes of $H(x_i)$ for $i\leq k$ ($H(x_0)$ is fixed since $x_0\in\pp\Gamma$).
Then, they took the conditional expectation value at point $p$ with respect to $\mathcal{F}_k$, $M_k=\mathbb{E}(H(v)|F_k)$.
By the tower property for conditional expectations, $M_k=\mathbb{E}(H(v)|F_k)$ form a martingale with bounded increments, that is $\mathbb{E}(M_{k+1}|F_k)=M_k$, which after applying Azuma's inequality gives the concentration inequality.

Let us renormalize the concentration inequality (\ref{concentr}) for a large $N$ as follows. Denote $\mathbb{P}_N^{\Gamma}$ the uniform measure on domino tilings of $\Gamma_N$. Then divide by $N$ inequality in the left-hand side of (\ref{concentr}) to get that
\bb
\mathbb{P}^{\Gamma}_N (|\hn(p)-\avhn(p)| > N^{-1}a \cdot \sqrt[]{m}) < 2\exp(-a^2/32).
\ee

Choose $a$ such that $C=N^{-1}a\sqrt{m}$. The length of a path $m$ behaves for large $N$ as $m \approx \ell^{\prime}(\Omega,p) N$. The quantity $\ell^{\prime}(\Omega,p)$ is approximately the length of the shortest path  inside $\Omega$ from point $p\in\Omega$ to $\pp\Omega$. Let us define $\ell^{\prime}(\Omega):=\max_{p} \ell(\Omega,p)$, which is finite due to compactness of $\Omega$.

The resulting concentration inequality so far is the following,

\bb
\mathbb{P}_N^{\Gamma} (|\hn(p)-\avhn(p)| > C) < 2\exp\left(-\frac{N C^2}{32\ell^{\prime}(\Omega)}\right),
\label{bound}
\ee

To obtain the probability $\mathbb{P}^{\Gamma}_N(\norm{\hn-\avhn}\geq  C$), we need to sum over all point $p\in\Gamma$ probabilities that $|\hn(p)-\avhn(p)| \geq C $,

\bb
\mathbb{P}_{N}^{\Gamma}\norm{\hn -\avhn}>C)=\sum_{p \in \Gamma}\mathbb{P}(|\hn(p)-\avhn(p)|\geq C)
\ee
The number of terms in the letter expression is bounded from above by $N^2|\Omega|$, where $|\Omega|$ is the area of $\Omega$, which is due to the fact that the number of vertices of $\Gamma_N$ is approximately $|\Omega|\times N^2 $,

\bb
\mathbb{P}_N^{\Gamma}\left(\norm{\hn -\avhn} > C\right)<2 |\Omega| N^2 \exp\left( -\frac{C^2 N}{32\ell^{\prime}(\Omega)}\right).
\ee

One can further choose an $\ell(\Omega)<32\ell^{\prime}(\Omega))$ to absorb the prefactor $12 |\Omega| N^2$ for sufficiently large $N$, and factor $32$ in front of $\ell^{\prime}$
\bb
\mathbb{P}^{\Gamma}_N\left(\norm{\hn -\avhn} > C\right)<\exp\left( -\frac{C^2 N}{\ell(\Omega)}\right).
\label{prob1}
\ee

Once we have a multiply-connected domain $\Gamma$, the situation slightly changes, however, the original bound still holds. In fact, there are two ways to get the desired inequality.

The first way is a straightforward repetition of the same arguments as in Theorem 21 in \cite{CEP}. One can always find a lattice path from a vertex of $\Tilde{\Gamma}_0$ to its boundary, where boundary values of the height function are fixed, and then apply the same arguments with the Azuma inequality.

Another way is to notice that it is sufficient to show the concentration inequality on $\mathcal{D}(\Omega)$ by \hyperref[iso]{Proposition \ref{iso}}. Since $\Gamma\cap \mathcal{D}(\Omega)$ is a finite simply-connected domain, therefore we can use theorem 21 from \cite{CEP} in the form of \hyperref[proba]{Lemma \ref{Concentration_lemma}}.
\end{proof}

\subsection{Piecewise linear approximations of asymptotic height functions}
In this subsection we recall piecewise linear approximations of Lipschitz functions that we use in the proofs.

Let us take $\ell > 0$ and take a triangular mesh with equilateral triangles of side $\ell$. 
We map an asymptotic height function $h \in \mathscr{H}(\Omega)$ to a piecewise linear approximation, that is linear on every triangle, moreover it is a unique linear function that agrees with $h$ at vertices of the triangle. Let us  denote this approximation of $h$ by $\hat h$. In Lemma 2.2 \cite{CKP} the authors show that in a simply-connected domain, $h$ approximates $\hat h$ on the majority of triangles. In a multiply-connected domain, we can build a piecewise linear approximation of $h$ on $\mathcal{D}(\Omega)$. Moreover, one can use such a triangulation of $\Omega$ that $\pp\mathcal{D}(\Omega)$ consists of sides of the triangles. The resulting approximation $\hat h$ has the same increments between connected boundary components $\nu_i$ and $\nu_{i+g}$ as $h$. The latter fact allows us to extend $\hat h$ to $\Tilde{\Omega}$ with the same monodromy data as $h$ and with the desired approximation property. Thus, we have the following.
\begin{claim} \label{analitic1} 
Let $h \in \mathscr{H}(\Omega)$ be asymptotic height function and let $\varepsilon>0$. Then for sufficiently small $\ell>0$, on at least $1-\varepsilon$ fraction of the triangles in the $\ell$-mesh that intersect $\Omega$, we have the following two properties: first, piecewise linear approximation $h_{\ell}$ is fit to within $\varepsilon \ell$ to $h$. Second, on at least $1-\varepsilon$ fraction(in measure) of points $x$, $\nabla h(x)$ exists and is within $\varepsilon$ to $\nabla h_{\ell}$.
\end{claim}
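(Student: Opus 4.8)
The plan is to reduce the multiply-connected statement to the simply-connected result, Lemma 2.2 of \cite{CKP}, applied on the fundamental domain $\mathcal{D}(\Omega)$, and then patch the monodromy back in. First I would fix a triangulation of $\Omega$ by equilateral triangles of side $\ell$ chosen so that the cutting curves $\gamma_1,\dots,\gamma_g$ (equivalently, the extra boundary pieces $\upsilon_1,\dots,\upsilon_{2g}$ of $\partial\mathcal{D}(\Omega)$) run along edges of the triangulation; this is possible after a small continuous perturbation of the $\gamma_i$, and it forces $\partial\mathcal{D}(\Omega)$ to be a polygonal subcomplex. Since $\mathcal{D}(\Omega)$ is compact and simply-connected, the restriction $h|_{\mathcal{D}(\Omega)}$ is an ordinary $2$-Lipschitz function with respect to the (intrinsic) sup norm, so Lemma 2.2 of \cite{CKP} directly gives, for every $\varepsilon>0$ and all sufficiently small $\ell$: on at least $1-\varepsilon$ of the triangles meeting $\mathcal{D}(\Omega)$ the linear interpolant $h_\ell$ agrees with $h$ to within $\varepsilon\ell$, and $\nabla h$ exists and is within $\varepsilon$ of $\nabla h_\ell$ on all but an $\varepsilon$-fraction (in area) of $\mathcal{D}(\Omega)$.

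Next I would extend $h_\ell$ from $\mathcal{D}(\Omega)$ to all of $\Tilde{\Omega}$. The key point is that $h$ has monodromy $\mathfrak{m}$, so on matched boundary pieces $\upsilon_i$ and $\upsilon_{i+g}$ the values of $h$ differ by the fixed constant $\mathfrak{m}(\alpha_i)$ corresponding to the generator $\alpha_i$ of $\pi_1(\Omega)$; since $h_\ell$ interpolates $h$ at the vertices lying on these pieces and these pieces are subcomplexes, $h_\ell$ inherits exactly the same boundary jump, hence $h_\ell|_{\upsilon_{i+g}} = h_\ell|_{\upsilon_i}+\mathfrak{m}(\alpha_i)$. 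Therefore the formula $\delta\cdot h_\ell(p,\hat\delta):=h_\ell(p,\hat\delta)+\mathfrak{m}(\delta)$, i.e. translating the values on $\delta\cdot\mathcal{D}(\Omega)$ by $\mathfrak{m}(\delta)$, glues consistently along all the $\upsilon_i$ and produces a well-defined quasi-periodic piecewise-linear function on $\Tilde{\Omega}$ with monodromy $\mathfrak{m}$ (this is exactly the mechanism of Proposition \ref{unique}). Because translations by constants do not change gradients and the gradient of a quasi-periodic function is well-defined on $\Omega$ by Lemma \ref{well}, both the fitting estimate $|h_\ell-h|\le\varepsilon\ell$ and the gradient estimate $|\nabla h_\ell-\nabla h|\le\varepsilon$ descend to honest statements on $\Omega$: the fractions of bad triangles and bad points, measured on $\Omega$, equal those measured on $\mathcal{D}(\Omega)$, since the $\pi_1(\Omega)$-translates tile $\Tilde{\Omega}$ with overlaps of empty interior and $\Omega=\Tilde{\Omega}/\pi_1(\Omega)$.

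The main obstacle, and the step deserving the most care, is the bookkeeping at the cuts: one must make sure that (a) the triangulation can genuinely be taken to respect $\partial\mathcal{D}(\Omega)$ — i.e. that the mesh size $\ell$ can be chosen small while keeping the $\upsilon_i$ as unions of triangle edges, which needs the $\gamma_i$ to be piecewise-linear along lattice-compatible directions and may require replacing equilateral triangles by a fixed affine image thereof near the cuts — and (b) that the matched pieces $\upsilon_i,\upsilon_{i+g}$ carry \emph{corresponding} vertices, so that the interpolant really does pick up the constant jump $\mathfrak{m}(\alpha_i)$ with no triangle straddling the cut ambiguously. A secondary point is that triangles meeting $\partial\Omega$ near a cut could be double-counted when passing between $\mathcal{D}(\Omega)$ and $\Omega$; but the number of such triangles is $O(1/\ell)$, hence a vanishing fraction of the $\Theta(1/\ell^2)$ triangles meeting $\Omega$, so it is absorbed into $\varepsilon$ for $\ell$ small. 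With these checks in place the claim follows, and I would state the conclusion verbatim as in the claim, having transferred the two approximation properties of \cite{CKP} from $\mathcal{D}(\Omega)$ to $\Omega$ via the quasi-periodic extension.
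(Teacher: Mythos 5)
Your proposal is correct and follows essentially the same route as the paper: apply Lemma 2.2 of \cite{CKP} on the simply-connected fundamental domain $\mathcal{D}(\Omega)$ with a triangulation chosen so that $\pp\mathcal{D}(\Omega)$ consists of triangle edges, observe that the interpolant inherits the same increments across the matched cut pieces, and extend quasi-periodically to $\Tilde{\Omega}$ with monodromy $\mathfrak{m}$. Your extra bookkeeping about vertex matching along the cuts and the $O(1/\ell)$ boundary triangles is a welcome refinement of details the paper leaves implicit.
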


\subsection{The cutting rule}
Suppose that we have a domain with a boundary condition $(\GT,\mathfrak{b})$ and a subset $\rho$ on the dual lattice from the boundary of $\GT$ to itself (thus, $\Gamma / \rho$ consists of several components, let us denote them $\Gamma^i$). We want to calculate the partition function of $\GT$ and one way to do it is to calculate partition functions $Z(H(\rho))$ with the given height function $H(\rho)$ along $\rho$. Then to sum up $Z(H(\rho))$ over all $H(\rho)$. The result is the original partition function because we just permute terms in a finite sum.

Then, we can interpret each $Z(H(\rho))$ as the product of the partition functions saying that $\rho$ cuts $\GT$.

\bb
Z\GB=\sum_{B_\rho}\prod_{i}Z(\GT^i,B^i_\rho)
\label{Cutting_boundary}
\ee
where $B^i_\rho$ is the boundary height function on $\Gamma^{i}$ that coincide with the original boundary height function $B$ and $B_\rho$ where it is possible.

\subsection{Surface tension}
\label{Surf}
Recall from \cite{CKP} that the asymptotic growth rate $\sigma(s,t)$ of the number of domino tilings of a rectangle $N\times N$ with periodic boundary conditions with the slope $(s,t)$ is defined by the following formula as $N\to\infty$, 
\bb
Z(R_N(s,t))\approx \exp\left(N^2\sigma(s,t)+O(N)\right).
\ee
The precise expression of $\sigma(s,t)$ is the following.
Recall the Lobachevsky function $L(z)=-\int_{0}^{z}{\log |2\sin{t}|dt}$ and quantities $p_a=p_a(s.t),p_b=p_b(s.t),p_c=p_c(s.t),p_d=p_d(s.t)$ that are determined by \hyperref[sigma]{expression (\ref{sigma})}. Then, the surface tension is the following function,
\bb
\sigma(s,t)= 1/\pi\left( L(\pi p_a)+L(\pi p_b)+L(\pi p_c)+L(\pi p_d) \right).
\label{Surfacedef}
\ee

The probabilities of four types of dominoes $p_a,p_b,p_c$ and $p_d$ that are determined by the following system in the limit as $N\to\infty$ \cite{CKP},
\bb
\begin{aligned}
&\quad 2(p_a - p_b) = t,\\
&\quad 2(p_d - p_c) = s,\\
&\quad
p_a + p_b + p_c + p_d = 1\\
&\quad
\sin(\pi p_a) \sin(\pi p_b) = \sin(\pi p_c) \sin(\pi p_d).
\label{sigma}
\end{aligned}
\ee

\section{The variational principle}
\label{sect6}

In this section we formulate and prove \hyperref[lim]{Theorem \ref{lim}} and its corollary,\hyperref[lim2]{Theorem \ref{lim2}}.

\subsection{Statement of theorems}
Suppose $\Omega$ is a domain with an boundary condition $\mathfrak{b}$ and let $(\Gamma_N,B_N)$ be an approximation of $(\Omega, \mathfrak{b})$, further $r$ is a continuous height change.

We also need to assume that the boundary condition $\mathfrak{b}$ is non-degenerate, that is it admits an extension to an asymptotic height function $\mathfrak{h}$ whose gradient is in the interior of the newton polygon $\mathcal{N}$ on the set of positive measure.
We need to assume this to guarantee the uniqueness of the limit shape $\mathfrak{h}^{\star}$, which is not the case for pathological boundary conditions, which could have only linear extensions of slope $\pp\mathcal{N}$, i.e., extensions with only frozen regions, where we can not use concavity of $\sigma$.

Also recall that $H_N:=\frac{1}{N}H$ is a normalized height function together with the normalized height change $\frac{1}{N}R_N$. 
Finally, let $\mathfrak{h}^{\star}$ be the unique maximizer of $\mathcal{F}$ over $\mathscr{H}(\OO,\mathfrak{b})$. Here we

and let $r^{\star}$ be the continuous height change of $\mathfrak{h}^{\star}$ Then,

\begin{theorem}
 In the limit as $N\to\infty$ and then $\delta\to 0$
\bb
  |\Omega|^{-1}N^{-2} \log Z \left( \Gamma_N,B_N \right) = \mathcal{F}(\mathfrak{h}^{\star})= \iint_{\Omega}{\sigma (\nabla \mathfrak{h}^{\star}) dx dy}+o_\delta(1).
\ee
Moreover, there exists $\ell>0$ such that as $N\to\infty$ we have
\bb
\mathbb{P}_N\left( \max_{x_N\in\Gamma}|H_N(x_N)-\mathfrak{h}^{\star}(x_N)|>\delta \right)\leq  \exp\left(-\frac{N\delta^2}{\ell}\right).
\ee
and $R_N$ converge to $r^{\star}$ with respect to $\mathbb{P}_N$ as $N\to\infty$.
\label{The_limit}
\end{theorem}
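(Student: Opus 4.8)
\textbf{Proof plan for Theorem~\ref{The_limit}.}
The plan is to reduce the multiply-connected variational principle to the Cohn--Kenyon--Propp machinery via the fundamental domain $\mathcal{D}(\Omega)$, using that all the geometric ingredients (height functions, their Lipschitz estimates, surface tension, concentration) have already been transported to the universal cover in the earlier sections. The proof splits into the usual two halves: an upper bound and a lower bound for $|\Omega|^{-1}N^{-2}\log Z(\Gamma_N,B_N)$, followed by deducing concentration of $H_N$ and of the height change $R_N$ from the free-energy asymptotics together with Lemma~\ref{Concentration_lemma}.

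First I would establish the \emph{upper bound} $\limsup_N |\Omega|^{-1}N^{-2}\log Z(\Gamma_N,B_N)\le \mathcal{F}(\mathfrak{h}^\star)+o_\delta(1)$. Fix the triangular $\ell$-mesh and the piecewise-linear approximation machinery of Claim~\ref{analitic1} on $\mathcal{D}(\Omega)$. Using the cutting rule (\ref{Cutting_boundary}) along the edges of the $\ell$-mesh that lie in $\mathcal{D}(\Omega)$ — crucially including the cut curves $\upsilon_i$ of $\partial\mathcal{D}(\Omega)$, which simultaneously fixes the monodromy $M_N$ and the discrete height change $R_N$ — one writes $Z(\Gamma_N,B_N)$ as a sum over boundary height data on the mesh skeleton of products of partition functions of single triangles. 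Each triangle of side $\ell N$ with near-linear boundary data of slope $s$ contributes $\exp(\ell^2 N^2(\sigma(s)+o(1))/(\text{norm.}))$ by the definition of $\sigma$ in Section~\ref{Surf}; the number of admissible boundary data on the skeleton is $\exp(o(N^2))$ because each height function is determined by $o(N^2)$ boundary values constrained by the lattice Lipschitz condition (\ref{lattice_lipschitz}); and by concavity of $\sigma$ on $\mathcal{N}$ (Jensen applied triangle by triangle) the maximal exponential rate over all admissible slope profiles is $\iint_{\mathcal{D}(\Omega)}\sigma(\nabla \mathfrak{h})\,dxdy$ maximised over $\mathfrak{h}\in\mathscr{H}(\Omega,\mathfrak{b})$, which equals $\mathcal{F}(\mathfrak{h}^\star)$ since the integrand and admissible class descend from $\tilde\Omega$ to $\mathcal{D}(\Omega)$ by Lemma~\ref{well} and Proposition~\ref{unique}. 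Sending $N\to\infty$ then $\ell\to 0$ gives the bound.

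Next the \emph{lower bound}: given $\mathfrak{h}^\star$, Theorem~\ref{density} produces a sequence of normalized discrete height functions $H_N$ with $\norm{\mathfrak{h}^\star-H_N}\le C/N$, in particular realizing a height change $R_N\to r^\star$ and the correct monodromy; fixing the boundary data of $N H_N$ along the $\ell$-mesh skeleton and summing only over tilings compatible with it, each triangle contributes $\exp(\ell^2 N^2\sigma(\nabla\mathfrak{h}^\star)(1+o(1)))$ by the sharp lower estimate for $Z$ of a near-linear region (again Section~\ref{Surf}), so $\log Z(\Gamma_N,B_N)\ge N^2\iint\sigma(\nabla\mathfrak{h}^\star)+o(N^2)=N^2|\Omega|\mathcal{F}(\mathfrak{h}^\star)+o(N^2)$. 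Matching the two bounds proves the free-energy statement. For the probabilistic statement, a standard argument shows that if $\mathbb{P}_N(\norm{H_N-\mathfrak{h}^\star}>\delta)$ were not super-exponentially small, then restricting to that event would carry free energy $\mathcal{F}(\mathfrak{h}')$ for some $\mathfrak{h}'$ at sup-distance $>\delta/2$ from $\mathfrak{h}^\star$, hence with $\mathcal{F}(\mathfrak{h}')<\mathcal{F}(\mathfrak{h}^\star)-c(\delta)$ by strict concavity / uniqueness of the maximizer (Proposition~\ref{exist}), contradicting the upper bound; combined with the genuinely quantitative Concentration Lemma~\ref{Concentration_lemma}, which controls fluctuations of $H_N$ around its mean $\avhn$, and the fact that $\avhn\to\mathfrak{h}^\star$ from the free-energy identity, one obtains the exponential bound $\exp(-N\delta^2/\ell)$. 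Convergence of $R_N$ to $r^\star$ follows since $R_N$ is a bounded linear functional of $H_N$ read off at the reference points $p_i$.

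The main obstacle I anticipate is the bookkeeping at the cut curves $\upsilon_i$ of $\partial\mathcal{D}(\Omega)$: unlike in the simply-connected case, the partition function is summed over \emph{all} height changes $R_N$, so one must simultaneously (i) show that summing over the $\exp(o(N^2))$ possible boundary profiles along $\upsilon_i$ does not change the exponential rate, and (ii) verify that the glued height data across $\upsilon_i$ and $\upsilon_{i+g}$ differ exactly by the monodromy $M_N$ so that the product of triangle partition functions genuinely reconstructs $Z(\Gamma_N,B_N)$ and not some twisted version of it — this is where the quasiperiodic formalism of Section~\ref{sect2} and the non-compactness of $\tilde\Omega$ must be handled with care. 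A secondary technical point is ensuring the concentration argument is uniform in $x_N$ over the growing (in the cover) domain, which is why one works on the compact $\mathcal{D}(\Omega)$ via Proposition~\ref{iso} rather than on $\tilde\Gamma_N$ directly.
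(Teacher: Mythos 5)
Your proposal is correct in substance and uses exactly the ingredients the paper uses (the fundamental domain $\mathcal{D}(\Omega)$, the cutting rule (\ref{Cutting_boundary}) along a triangular $\ell$-mesh, Claim~\ref{analitic1}, the Density Lemma~\ref{density}, the Concentration Lemma~\ref{Concentration_lemma}, compactness from Theorem~\ref{comp}, and uniqueness of the maximizer from Proposition~\ref{exist}), but it decomposes the argument differently. You follow the classical order: first a \emph{global} upper bound on $Z(\Gamma_N,B_N)$ by summing over all admissible slope profiles on the mesh and invoking concavity of $\sigma$ triangle by triangle, then a lower bound via a discrete approximant of $\mathfrak{h}^\star$, and only afterwards a large-deviation argument for concentration. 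The paper inverts this: it first extracts (via Theorem~\ref{density} and compactness) a subsequential limit $\mathfrak{h}$ of the \emph{expected} height functions $\avhn$, uses the Azuma-type Concentration Lemma to get the quantitative $\exp(-N\delta^2/\ell)$ concentration of $H_N$ around $\mathfrak{h}$, and only then evaluates the free energy — which by concentration reduces to the \emph{conditioned} partition function $Z(\Gamma_N,B_N\mid\mathfrak{h},\delta)$, handled by Theorem~\ref{Functional}; the identification $\mathfrak{h}=\mathfrak{h}^\star$ is then a contradiction argument comparing $\mathcal{F}(\mathfrak{h})$ with $\mathcal{F}(\mathfrak{h}^\star)$. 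The practical difference is that the paper never needs your global upper bound over arbitrary (not near-linear) boundary data on triangles — both bounds in Theorem~\ref{Functional} are taken within the $\delta$-neighborhood of a fixed $\mathfrak{h}$ — whereas your route needs the extra superadditivity/Jensen step there; conversely, your route makes the variational characterization of the limit more transparent, while the paper's route makes the stated rate $\exp(-N\delta^2/\ell)$ (which is of order $N$, not $N^2$, and comes from Azuma, not from the free-energy gap) appear more directly. Your closing remarks about the bookkeeping at the cut curves $\upsilon_i$ and about working on the compact $\mathcal{D}(\Omega)$ via Proposition~\ref{iso} are exactly the points the paper also flags.
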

This theorem can be formulated for a fixed height change $r\in\RR^g$, assume also that $\frac{1}{N} R_N \to r$ as $N\to\infty$. Then we have the following Corollary,
\begin{theorem}
\label{lim2}
 There exists $\mathfrak{h}^{\star}_{r}\in\mathscr{H}(\OO,\mathfrak{b},r)$ such that in the limit $N\to\infty$ and then $\delta\to 0$
\bb
    |\Omega|^{-1}N^{-2} \log Z \left( \Gamma_N,B_N, R_N \right) =  \iint_{\Omega}{\sigma (\nabla \mathfrak{h}^{\star}_r) dx dy}+o_\delta(1).
\ee
Furthermore, there exists $\ell>0$ such that as $N\to\infty$ the following holds,
\bb
\mathbb{P}_N^R\left( \max_{x_N\in\Gamma}|H_N(x_N)-\mathfrak{h}^{\star}_r(x_N)|>\delta \right)\leq \exp\left(-\frac{N\delta^2}{\ell}\right).
\ee

\end{theorem}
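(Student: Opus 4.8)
The plan is to establish \hyperref[lim2]{Theorem \ref{lim2}} by repeating the argument for \hyperref[The_limit]{Theorem \ref{The_limit}} with the constraint $\frac1N R_N\to r$ imposed from the start, and then to read off concentration of $H_N$ from the resulting variational asymptotics; in fact \hyperref[lim2]{Theorem \ref{lim2}} is the finer statement, since $Z(\Gamma_N,B_N)=\sum_{R_N}Z(\Gamma_N,B_N,R_N)$ is a sum of only $O(N^g)$ terms, so that taking the maximum over $r$ recovers \hyperref[The_limit]{Theorem \ref{The_limit}}.

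\emph{Existence and uniqueness of $\mathfrak{h}^{\star}_r$.} The space $\mathscr{H}(\Omega,\mathfrak{b},r)$ is nonempty for admissible $r$ (i.e.\ $-2\norm{x_i-x_0}\le r_i\le 2\norm{x_i-x_0}$, the constraint that lets the explicit maximal-extension formula produce an element) and is compact in the intrinsic sup norm by \hyperref[comp]{Theorem \ref{comp}}. Since $\sigma$ is continuous and bounded on $\mathcal N$, the surface tension functional $\mathcal F$ is upper semicontinuous along uniformly convergent sequences (gradients converge weakly-$\ast$ and one invokes Fatou, as in \cite{CKP}), so a maximizer $\mathfrak{h}^{\star}_r$ exists; uniqueness on the non-degenerate locus follows from strict concavity of $\sigma$ on $\mathrm{int}\,\mathcal N$ (compare \hyperref[exist]{Proposition \ref{exist}}).

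The partition function asymptotics come from matching bounds. For the lower bound, use the Density Lemma (\hyperref[density]{Theorem \ref{density}}) to produce discrete height functions $H_N$ with $\norm{\mathfrak{h}^{\star}_r-H_N}\le C/N$ and with height change $\frac1N R_N\to r$ (the construction there already outputs a height function with prescribed height change along $\{\gamma_i\}$); then cut $\Omega$ into $\delta$-squares on which, by \hyperref[analitic1]{Claim \ref{analitic1}}, $\mathfrak{h}^{\star}_r$ is $\varepsilon\delta$-close to a linear profile with slope in $\mathrm{int}\,\mathcal N$ on all but an $\varepsilon$ fraction of them, and on each such square count tilings with boundary pinned near that profile via $Z(R_N(s,t))\approx\exp(N^2\sigma(s,t))$. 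Applying the cutting rule (\ref{Cutting_boundary}) with the cuts chosen to contain the curves $\gamma_i$ — so that every piece is simply-connected and the boundary data prescribed along the cuts fixes the height change to exactly $R_N$ — gives $N^{-2}\log Z(\Gamma_N,B_N,R_N)\ge\iint_\Omega\sigma(\nabla\mathfrak{h}^{\star}_r)\,dxdy-o_\delta(1)$. For the upper bound, an arbitrary tiling with height change $R_N$ has normalized height function within $C'/N$ of some $\mathfrak h\in\mathscr{H}(\Omega,\mathfrak{b},r)$ by \hyperref[density]{Theorem \ref{density}}; cutting $\Omega$ into $\delta$-squares, the number of admissible boundary conditions along the cuts is $\exp(o(N^2))$, and on each square $Z(\Gamma^i_N,B^i_\rho)\le\exp(N^2\delta^2\sigma(\bar s^i)+o(N^2))$ with $\bar s^i$ the mean slope of $\mathfrak h$; concavity of $\sigma$ and Jensen then yield $N^{-2}\log Z(\Gamma_N,B_N,R_N)\le\iint_\Omega\sigma(\nabla\mathfrak h)\,dxdy+o_\delta(1)\le\iint_\Omega\sigma(\nabla\mathfrak{h}^{\star}_r)\,dxdy+o_\delta(1)$, the last step by maximality of $\mathfrak{h}^{\star}_r$ in $\mathscr{H}(\Omega,\mathfrak{b},r)$.

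For concentration I would split $\mathbb{P}_N^R(\norm{H_N-\mathfrak{h}^{\star}_r}>\delta)\le\mathbb{P}_N^R(\norm{H_N-\avhn}>\delta/2)+\mathbf 1[\norm{\avhn-\mathfrak{h}^{\star}_r}>\delta/2]$: the first term is $\le\exp(-N\delta^2/\ell(\Omega))$ by the Concentration Lemma (\hyperref[Concentration_lemma]{Lemma \ref{Concentration_lemma}}), and for the second, the set of $\mathfrak h\in\mathscr{H}(\Omega,\mathfrak{b},r)$ with $\norm{\mathfrak h-\mathfrak{h}^{\star}_r}\ge\delta/2$ is compact and omits the unique maximizer, so $\mathcal F\le\mathcal F(\mathfrak{h}^{\star}_r)-\epsilon(\delta)$ there; feeding this into the variational upper bound restricted to tilings in that region, against the lower bound for the full count, makes the proportion of such tilings $\le\exp(-\tfrac12\epsilon(\delta)N^2)$, which both forces $\avhn\to\mathfrak{h}^{\star}_r$ and gives the claimed estimate after enlarging $\ell$. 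The main difficulty, as in the arbitrary-height-change case, is the bookkeeping forced by the non-compactness of $\Tilde{\Omega}$: one must apply the cutting rule with cuts that simultaneously sever the handles of $\Omega$ (so the pieces are simply-connected and the CKP estimates apply) and pin the height change exactly to $R_N$, all while keeping the number of admissible boundary conditions along every cut at scale $\exp(o(N^2))$; checking that these can be arranged together, along with the continuity of $r\mapsto\mathcal F(\mathfrak{h}^{\star}_r)$ used to pass between the constrained and unconstrained problems, is where the real work lies.
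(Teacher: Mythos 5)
Your proposal is correct and follows essentially the same route as the paper: the paper obtains Theorem \ref{lim2} by rerunning the proof of Theorem \ref{The_limit} with the height change fixed throughout, using exactly the ingredients you invoke (compactness of $\mathscr{H}(\Omega,\mathfrak{b}_r,r)$ and Proposition \ref{exist} for $\mathfrak{h}^{\star}_r$, the Concentration Lemma and Density Lemma for concentration, and the cutting-rule/surface-tension argument of Theorem \ref{Functional} for the partition function). The only difference is one of ordering — the paper first extracts a subsequential limit of $\overline{H}_N$ and identifies it with the maximizer at the end, whereas you go directly to $\mathfrak{h}^{\star}_r$ and deduce concentration from the matching variational bounds — but this is the same argument rearranged, not a genuinely different proof.
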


In the remaining section we prove \hyperref[The_limit]{Theorem \ref{The_limit}} in three steps, and deduce from it \hyperref[lim2]{Theorem \ref{lim2}}. The steps are

\subsection{Convergence of height functions to the limit shape}

Here, we give a proof of a law of large numbers for height function. More precisely, we show that a normalized height function converges in both regimes to its expected value that is approximately the unique solution to the variational problem. We write it for an arbitrary height change, and later discuss the modifications for a fixed height change.

Consider a sequence $\{ \overline{H}_N \}$ of expectation values of normalized height functions on $(\Gamma_N,B_N)$. We know that by \hyperref[density]{Theorem~\ref{density}} there exists a sequence of asymptotic height functions $\{\mathfrak{h}_N\}$, such that $\norm{\mathfrak{h}_N-\avhn}\leq \frac{C}{N}$. By \hyperref[comp]{Proposition \ref{comp}}  $\{\mathfrak{h}_N\}$ has a convergent subsequence, denote its limit by $\mathfrak{h}$. Without loss of generality, we suppose that the convergent subsequence is $\{\mathfrak{h}_N\}$ itself. We will see later that $\mathfrak{h}=\mathfrak{h}^{\star}$

\begin{lemma}
 One can find such an $\ell^{\prime}$ that for $\forall\delta>0$ and for sufficiently large $N$
\bb
\mathbb{P}(\norm{H_N-\mathfrak{h}}>\delta)\le \exp(- N \delta^2/\ell^{\prime}).
\ee

\end{lemma}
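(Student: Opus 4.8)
The plan is to read the estimate off from the Concentration Lemma (\hyperref[Concentration_lemma]{Lemma \ref{Concentration_lemma}}) after replacing the random centring $\avhn$ by the deterministic limit $\mathfrak{h}$. The mechanism is simple: $\avhn$ sits within a vanishing deterministic error of $\mathfrak{h}$ by the Density Lemma (\hyperref[density]{Theorem \ref{density}}) and the compactness set up in \hyperref[comp]{Theorem \ref{comp}}, so any macroscopic deviation of $\hn$ from $\mathfrak{h}$ forces a comparable deviation of $\hn$ from its own mean, and the latter is exactly what \hyperref[Concentration_lemma]{Lemma \ref{Concentration_lemma}} rules out with Gaussian probability. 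In that sense the lemma is essentially a corollary of the two lemmas already proved.

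Concretely, I would first fix $\delta>0$ and bound the deterministic term $\norm{\avhn-\mathfrak{h}}\le\norm{\avhn-\mathfrak{h}_N}+\norm{\mathfrak{h}_N-\mathfrak{h}}$: the first summand is $\le C/N$ by \hyperref[density]{Theorem \ref{density}}, and the second tends to $0$ since, after passing to the convergent subsequence and relabelling, $\mathfrak{h}_N\to\mathfrak{h}$ in the intrinsic sup norm. Hence there is $N_0$ with $\norm{\avhn-\mathfrak{h}}<\delta/3$ for all $N\ge N_0$. At this point I would also note that $\hn$, $\avhn$ and $\mathfrak{h}$ are pairwise quasi-periodic with monodromies tending to one another, so by \hyperref[iso]{Proposition \ref{iso}} (equivalently \hyperref[unique]{Proposition \ref{unique}}) all these suprema may be taken over the compact fundamental domain $\mathcal{D}(\Omega)$; this keeps the norms finite on the non-compact universal cover and makes the reduction used inside \hyperref[Concentration_lemma]{Lemma \ref{Concentration_lemma}} apply verbatim.

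Then, by the triangle inequality, for $N\ge N_0$ one has the event inclusion $\{\norm{\hn-\mathfrak{h}}>\delta\}\subseteq\{\norm{\hn-\avhn}>\delta/3\}$, so \hyperref[Concentration_lemma]{Lemma \ref{Concentration_lemma}} applied with the constant $\delta/3$ in place of $C$ gives, for all sufficiently large $N$,
\bb
\mathbb{P}(\norm{\hn-\mathfrak{h}}>\delta)\le\mathbb{P}_N\!\left(\norm{\hn-\avhn}>\tfrac{\delta}{3}\right)<\exp\!\left(-\frac{\delta^2 N}{9\,\ell(\Omega)}\right),
\ee
which is the claim with $\ell^{\prime}:=9\,\ell(\Omega)$. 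The argument is otherwise mechanical; the only subtle point — the closest thing to an obstacle — is exactly the localisation to $\mathcal{D}(\Omega)$ mentioned above: because $\hn-\mathfrak{h}$ has monodromy $\tfrac1N M_N-\mathfrak{m}$, which is small but generally nonzero, this difference is not bounded on all of the universal cover, and one must pass to the fundamental domain before invoking concentration and before the triangle inequality can be read as an inclusion of events.
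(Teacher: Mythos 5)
Your proposal is correct and follows essentially the same route as the paper: the triangle-inequality decomposition $\norm{H_N-\mathfrak{h}}\le\norm{\avhn-\mathfrak{h}_N}+\norm{\mathfrak{h}_N-\mathfrak{h}}+\norm{H_N-\avhn}$, with the first two terms controlled deterministically by the Density Lemma and the compactness/subsequence argument, and the third by the Concentration Lemma with constant $\delta/3$ (your bookkeeping of the constant as $\ell'=9\ell(\Omega)$ is in fact slightly more careful than the paper's). The remark about localising to the fundamental domain is a sensible clarification but does not change the argument.
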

\begin{proof}
We deduce it from a combination of \hyperref[concentration]{Lemma \ref{Concentration_lemma}} applied to $H_N$ and convergence of $h_N$ to $\mathfrak{h}$. The goal is to show that the inequality $\norm{H_N-\mathfrak{h}}\leq\delta$ holds for sufficiently large $N$ with probability exponentially close to $1$.

By the triangle inequality we have

`\bb
\norm{H_N-\mathfrak{h}}\leq \norm{\bar{H}_N-\mathfrak{h}_N}+\norm{\mathfrak{h}_N-\mathfrak{h}}+\norm{H_N-\bar{H}_N}.
\label{triangle}
\ee

The first term in the right-hand side of (\ref{triangle}) is smaller than $\delta/3$ for sufficiently large $N$ by definition of $\mathfrak{h}_N$, that is by \hyperref[density]{Lemma \ref{density}} we have $C>0$ such that $\norm{\mathfrak{h}_N-\bar{H}_N}<\frac{C}{N}$, which is smaller than $\delta/3$ for sufficiently large $N$.
The second terms of (\ref{triangle}) is smaller than $\delta/3$ for sufficiently large $N$ due to convergence of $\mathfrak{h}_N$ to $\mathfrak{h}$, which holds by definition of $\mathfrak{h}$.

The third term (\ref{triangle}) is smaller than $\delta/3$ with the probability $1-\exp(-\frac{\delta^2 N}{\ell})$, where we defined $\ell=3\ell^{\prime}$ for $\ell^{\prime}$ from \hyperref[concentration]{Lemma \ref{Concentration_lemma}}.
Therefore, we have $\norm{H_N-\mathfrak{h}}>\delta$ with probability bounded by $ \exp(- N \delta^2/\ell^{\prime})$.

\end{proof}

So far, we know that up to extracting a subsequence, normalized height functions converge with respect to the uniform norm in probability to $\mathfrak{h}$ and the contribution of height functions that are far away from $\mathfrak{h}$ are exponentially suppressed. Thus, $\mathfrak{h}$ is the limit shape. Since height change $R_N$ is a continuous function of $H_N$, it converges to the height change of the limit shape, which is $r$. Later, we prove that $\mathfrak{h}=\mathfrak{h}^{\star}$.

\subsection{Convergence of partition function}
Let us show that one can find an asymptotic expression of the partition function, which is a straightforward corollary of the convergence of height functions to the limit shape. The following proof holds for a fixed height change $r$ after replacing $\mathfrak{h}$ by $\mathfrak{h}_r$ and $Z(\Gamma_N,B_N)$ by $Z(\Gamma_N,B_N,R_N)$.

Define $U^N_{\delta}(\mathfrak{h}^{\star})$ to be the set of height functions on $\Gamma_N$ that are fit to within $\delta$ to $\mathfrak{h}^{\star}$, $\norm{H_N-\mathfrak{h}^{\star}}\leq\delta$.

Then, the following holds due to the concentration inequality above,

\bb
\mathbb{P}_N\norm{H_N-\mathfrak{h}^{\star}}\leq \delta)=1-\mathbb{P}_N(\norm{H_N - \mathfrak{h}^{\star}}>\delta).
\ee
Or more precisely,
\bb
\frac{Z(\Gamma_N,B_N|\mathfrak{h}^{\star},\delta)}{Z(\Gamma,B_N)}=1+O(
\exp(-\delta^2 N/\ell)).
\ee

Now, let us take the logarithm of both sides and normalize them by $N^{-2}$. Also introduce the notation $S(N,\delta):=1+O(
\exp(-\delta^2 N/\ell))$ for the simplicity.
\bb
N^{-2}\log Z(\Gamma_N,B_N)=N^{-2}\log(Z(\Gamma_N,B_N|\mathfrak{h}^{\star},\delta))+N^{-2}\log(S(N,\delta)).
\ee

Now we can take limit as $N\to\infty$ to make $\log S(N,\delta)$ converge to zero. Finally, we obtain the desired expression by \hyperref[Surface]{Theorem \ref{Surface}},
\bb
N^{-2}\log Z(\Gamma_N,B_N)\xrightarrow[{N\to\infty}]{}\mathcal{F}(\mathfrak{h}^{\star}).
\ee

\subsection{The Surface tension functional and the limit shape}

We still need to show that $h$ maximizes $\mathcal{F}$, that is $h=\mathfrak{h}^{\star}$. Again, it follows easily from the concentration of height functions around $\mathfrak{h}$.

Assume that $\mathfrak{h}\neq \mathfrak{h}^{\star}$. We need to show that $\mathcal{F}(\mathfrak{h}) \geq \mathcal{F}(\mathfrak{h}^{\star})$.
Suppose the opposite, that is $\mathcal{F}(\mathfrak{h}) < \mathcal{F}(\mathfrak{h}^{\star})$. By \hyperref[functional]{Theorem \ref{Functional}}, $\mathcal{F}(\mathfrak{h}^{\star})$ (resp.$\mathcal{F}(\mathfrak{h})$) is the limit of the normalized number of domino tilings whose normalized height functions are fit  within $\delta$ to $\mathfrak{h}^{\star}$ (resp.$\mathfrak{h}$) for $\delta\to 0$. Then, we can use that normalized height functions $H_N$ concentrate around $\mathfrak{h}$, which can be separated from $\mathfrak{h}^{\star}$ by the choice of a smaller $\delta>0$. The contradiction follows from the fact that the overwhelming majority of domino tilings are $\delta$-close to $\mathfrak{h}$, but not to $\mathfrak{h}^{\star}$ and thus, we are done.

The proof for the fixed asymptotic height change $r$ works the same way after replacing of $\mathfrak{h}$ by $\mathfrak{h}_r$.

\section{Existence of the maximizer}
Here we show the existence of solution of the variational problems, for an arbitrary height change, and for a given asymptotic height change.

Recall that the gradient of asymptotic height functions
Before formulating the theorems, we need an extra assumption on
the boundary conditions. Recall that $\sigma: \mathcal{N}\to\RR$ is concave only in the interior of $\mathcal{N}$, thus in order to guarantee the uniqueness, we must have at least one point where $\nabla \mathfrak{h}\in\\mathcal{N}^{\circ}$. So call a boundary condition non-degenerate if it admits an extension to such an asymptotic height function $\mathfrak{h}$ that $\exists$ such a proper subset $\Omega^{\prime}\subseteq \Omega$ that $\forall p\Omega^{\prime}$ $\nabla(p) \mathfrak{h}\in\mathcal{N}^{\circ}$. This way we have a subregion of $\Omega$ where we can use the concavity of $\sigma$ following proposition 2.4 from \cite{CKP}.
It is easy to construct counterexamples in the situation where this condition does not hold, and the boundary condition admits two different totally frozen extensions, see discussion of figure 39 from \cite{NSW}.
\label{newton}

\label{sect7}
\begin{proposition}
There exists a unique maximizer $\mathfrak{h}^{\star}$ of $\mathcal{F}$ over the space $\mathscr{H}(\Omega,\mathfrak{m},\mathfrak{b})$. 

There exists a unique maximizer $\mathfrak{h}^{\star}_r$ of $\mathcal{F}$ over the space $\mathscr{H}(\Omega,\mathfrak{m},\mathfrak{b}_r,r)$.
\label{exist}
\end{proposition}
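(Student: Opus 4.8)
The plan is to follow the two-step scheme of \cite{CKP}, Section~2. For existence: by Theorem~\ref{comp} the space $\mathscr H(\Omega,\mathfrak m,\mathfrak b)$ is compact for the intrinsic sup norm, so it suffices to show that $\mathcal F(h)=\iint_\Omega\sigma(\nabla h)\,dx\,dy$ is upper semicontinuous along uniformly convergent sequences. If $h_n\to h$ uniformly with $h_n\in\mathscr H(\Omega,\mathfrak m,\mathfrak b)$, then the $h_n$ are uniformly $2$-Lipschitz, so $\nabla h_n$ is bounded in $L^\infty(\Omega)$ and, testing against $C_c^\infty$ functions, converges weakly-$*$ to the distributional gradient $\nabla h$. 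Since $\sigma$ is continuous and concave on $\mathcal N$, the functional $-\mathcal F$ is a convex integral functional and hence weakly-$*$ lower semicontinuous, so $\limsup_n\mathcal F(h_n)\le\mathcal F(h)$; as $\sigma$ is bounded on the compact polygon $\mathcal N$ and $|\Omega|<\infty$, $\mathcal F$ is bounded and a maximizing sequence has a limit point attaining the supremum. The same applies to $\mathscr H(\Omega,\mathfrak m,\mathfrak b_r,r)$, which is a closed, hence compact, convex subspace.

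\textbf{Uniqueness, reduction to the interior.} The admissible set is convex: the prescribed boundary increments $\mathfrak b_i$, the additive monodromy relation for the fixed $\mathfrak m$, the $2$-Lipschitz bound in the intrinsic norm, and the constraint $\nabla h\in\mathcal N$ are each preserved under convex combinations (and for the fixed-height-change space the conditions $h(x_i,\gamma_i)=r_i$ are further affine constraints; a convex combination of functions with height changes $r,r'$ has height change on $[r,r']$, so the union $\mathscr H(\Omega,\mathfrak m,\mathfrak b)=\bigcup_r\mathscr H(\Omega,\mathfrak m,\mathfrak b_r)$ is convex too). As $\sigma$ is concave, $\mathcal F$ is concave and its maximizer set is convex. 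If $h_0\ne h_1$ both maximized $\mathcal F$, then every $h_t=(1-t)h_0+th_1$ would maximize it, so $t\mapsto\mathcal F(h_t)$ is affine; applying Jensen's inequality pointwise then forces $\sigma$ to be affine along the segment $[\nabla h_0(x),\nabla h_1(x)]$ for a.e.\ $x\in\Omega$. Since $\sigma$ is strictly concave on $\mathcal N^{\circ}$, this already gives $\nabla h_0(x)=\nabla h_1(x)$ at a.e.\ point where either gradient lies in $\mathcal N^{\circ}$.

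\textbf{The main obstacle.} What is left --- and where the real work lies --- is the set $S$ where $\nabla h_0\ne\nabla h_1$ and both gradients lie on $\partial\mathcal N$, on which $\sigma$ is affine so the previous argument sees nothing; the totally frozen counterexamples noted just above are precisely of this type. Here I would invoke the non-degeneracy hypothesis together with \cite{CKP}, Proposition~2.4: on a region where the gradient of a height function is pinned to one edge of $\partial\mathcal N$ the function is partially frozen and rigidly determined by its trace, while non-degeneracy forces a maximizer (which does at least as well as the non-degenerate extension in the hypothesis) to have $\nabla\mathfrak h^{\star}\in\mathcal N^{\circ}$ on a set of positive measure; propagating the interior region across $\Omega$ by connectedness, using this frozen-region rigidity, forces $h_0=h_1$ on $S$ as well. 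Hence $\nabla h_0=\nabla h_1$ a.e.\ on $\Omega$, and since $h_0$ and $h_1$ agree at the base point $x_0$ and carry the same monodromy $\mathfrak m$, they coincide on all of $\widetilde\Omega$ --- a contradiction.

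\textbf{What is new.} For $\mathscr H(\Omega,\mathfrak m,\mathfrak b_r,r)$ the argument is verbatim, with $\mathfrak b_r$ assumed non-degenerate. The only genuinely new ingredient compared with the simply-connected case is that the frozen-region rigidity and the propagation of the interior must be carried out on the fundamental domain $\mathcal D(\Omega)$ and made compatible with the cuts $\upsilon_i$ and the prescribed monodromy $\mathfrak m$; everything else is a transcription of the simply-connected proof of \cite{CKP}.
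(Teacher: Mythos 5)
Your proposal is correct and follows essentially the same route as the paper: existence via compactness of $\mathscr{H}(\Omega,\mathfrak{m},\mathfrak{b})$ (Theorem \ref{comp}) together with upper semicontinuity of $\mathcal{F}$ (the paper cites Lemma 2.1 of \cite{CKP}, you spell out the weak-$*$ argument), and uniqueness via concavity of $\mathcal{F}$, strict concavity of $\sigma$ on $\mathcal{N}^{\circ}$, and the non-degeneracy hypothesis to rule out disagreement on the frozen set (the paper delegates this to Lemma 2.4 of \cite{CKP} and Proposition 4.5 of \cite{DS}). The only point the paper makes that you compress is the reduction to the fundamental domain with free periodic boundary conditions across the cuts $\upsilon_i$, which you correctly flag as the one genuinely new ingredient.
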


\begin{proof}
Consider a fundamental domain $\mathcal{D}(\Omega)$ for action of $\pi _1(\Omega)$ on the universal covering space of $\Omega$. Recall that by \hyperref[unique]{Proposition \ref{iso}}, it is sufficient to find a maximizer of $\mathcal{F}$ on $\mathcal{D}(\Omega)$. 

The boundary $\pp \mathcal{D}(\Omega)$ consists of two parts, $\pp \mathcal{D}(\Omega)=\pp \Omega\bigsqcup \pp \mathcal{D}^{1}(\Omega)$, the first one is supplemented with boundary condition $\mathfrak{b}$ and the other one is the union of $2g$ curves $\{\upsilon_i \}_{i=1}^{2g}$ matched in pairs $\upsilon_i, \upsilon^{\prime}$ with free boundary conditions subject to periodicity across the pairs so that the values on $\upsilon_i$ coincide with the values on $\upsilon^{\prime}_i$(recall that $\upsilon_i$ and $\upsilon^{\prime}_i$ were obtained by the cut along the curve $\gamma_i$)

The space $\mathscr{H}(\Omega,\mathfrak{m},\mathfrak{b})$(resp.~$\mathscr{H}(\Omega,\mathfrak{m},\mathfrak{b}_r,r)$) is compact by \hyperref[comp]{ Theorem \ref{comp}}. Then, $\mathcal{F}$ is upper semi-continuous on spaces $\mathscr{H}(\Omega,\mathfrak{m},\mathfrak{b}_r,r)$ and $\mathscr{H}(\Omega,\mathfrak{m},\mathfrak{b})$. This follows from Lemma 2.1 from \cite{CKP}, which uses only the convexity of $\sigma$ and the Lipschitz condition, and thus, trivially extends to partially free periodic boundary conditions.

Therefore, there exists the maximizer $\mathfrak{h}^{\star}$ of $\mathcal{F}$ on $\mathscr{H}(\Omega),\mathfrak{m},\mathfrak{b})$ (resp. $\mathfrak{h}^{\star}_r$ on $\mathscr{H}(\Omega,\mathfrak{m},\mathfrak{b}_r,r)$). By the same convexity argument as in Lemma 2.4 from \cite{CKP}, this maximizer is unique.

A priori $\mathfrak{h}^{\star}$ and $\mathfrak{h}^{\star}_r$ depend on a particular choice of the fundamental domain $\mathcal{D}(\Omega)$. However, since gradients of functions from $\mathscr{H}(\Tilde{\Omega},\mathfrak{b})$ are well-defined objects on $\Omega$ by \hyperref[well]{Lemma \ref{well}} and $\sigma$ is strictly convex everywhere in the interior of the domain of definition \cite{CKP,KOS}, we can use Proposition 4.5 from \cite{DS} to show uniqueness. See also the discussion in Section 5.6 in \cite{ZAPD}.
\end{proof}

\section{The Surface Tension Theorem.}
\label{sect8}
In this section we formulate \hyperref[Surface]{Theorem \ref{Surface}} and give a proof of it.

\begin{theorem}
\label{Surface}
Let $\Omega$ be a domain in $\mathbb{R}^2$ and $\mathfrak{h} \in \mathscr{H}(\Omega,\mathfrak{b})$. Suppose that $(\Gamma_N,B_N)$ is an approximation of $(\Omega,\mathfrak{b})$.

 Then, for $\forall \delta>0$ sufficiently small,

\begin{equation} 
\lim_{N\to\infty}\sup\left|\frac{1}{|\Omega|}N^{-2} \log Z(\Gamma_N,B_N\mid \mathfrak{h},\delta)-\int_{\Omega}{\sigma(\pp_x \mathfrak{h},\pp_y \mathfrak{h}) dx dy}\right|=o_\delta(1),
\end{equation}
\label{Functional}
\end{theorem}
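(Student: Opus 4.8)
The plan is to reduce the Surface Tension Theorem on the multiply-connected domain $\Omega$ to the corresponding statement on a simply-connected domain, namely the fundamental domain $\mathcal{D}(\Omega)$, where it is essentially Proposition 4.2 of \cite{CKP}. By the \hyperref[unique]{bijection/isomorphism results} (\hyperref[iso]{Proposition \ref{iso}}) a domino tiling of $\Gamma_N$ whose height function is $\delta$-close to $\mathfrak{h}$ is determined by its restriction to the fundamental lattice domain $\mathcal{D}(\Gamma_N)$, together with the matching of the boundary height values across the paired curves $\upsilon_i,\upsilon_i'$ and the value of the monodromy $M_N$. The monodromy is fixed by $\partial\Gamma_N$ by \hyperref[form]{Lemma \ref{form}}, so it contributes nothing; the matching across the cuts is a boundary condition that is free but periodic, of length $O(N)$, so summing over all admissible such matchings contributes at most $\exp(O(N\log N))$, which is $o(N^2)$ after normalization and therefore invisible in the limit. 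Thus it suffices to prove the asymptotics on $\mathcal{D}(\Gamma_N)$ with the (now genuine, single-valued) boundary condition obtained by restricting $\mathfrak{b}$ and freezing the cut values near $\mathfrak{h}$.

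With the problem localized to a simply-connected region, I would carry out the standard two-sided argument of \cite{CKP}. First the \emph{lower bound}: using \hyperref[analitic1]{Claim \ref{analitic1}}, approximate $\mathfrak{h}$ by a piecewise-linear function $\mathfrak{h}_\ell$ on a triangular $\ell$-mesh that agrees with $\mathfrak{h}$ at mesh vertices and whose gradient lies in $\mathcal{N}^\circ$ on a $(1-\varepsilon)$-fraction of triangles; on each such triangle the number of tilings with the prescribed near-linear boundary behaviour grows like $\exp(\text{area}\cdot N^2\,\sigma(\nabla\mathfrak{h}_\ell)+o(N^2))$ by the torus asymptotics \hyperref[Surfacedef]{(\ref{Surfacedef})}, and the \hyperref[Cutting_boundary]{cutting rule (\ref{Cutting_boundary})} lets one multiply these counts across triangles after paying a sub-exponential boundary-matching cost; summing over the choices of admissible boundary heights along the mesh edges (again $o(N^2)$ in the exponent) and letting $\ell\to 0$ recovers $\iint_\Omega\sigma(\nabla\mathfrak{h})$ by continuity of $\sigma$ on $\mathcal{N}$ and dominated convergence. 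Then the \emph{upper bound}: every tiling counted in $Z(\Gamma_N,B_N\mid\mathfrak{h},\delta)$ has normalized height function within $\delta$ of $\mathfrak{h}$, so on each mesh triangle its boundary height is within $O(\delta+\ell)$ of that of $\mathfrak{h}_\ell$; the number of tilings of a triangle with boundary heights in such a window is bounded by $\exp(\text{area}\cdot N^2(\sigma(\nabla\mathfrak{h}_\ell)+\omega(\delta,\ell))+o(N^2))$ where $\omega\to 0$, using concavity of $\sigma$ on $\mathcal{N}^\circ$ together with the uniform control on the exceptional $\varepsilon$-fraction of triangles (where $\sigma$ is just bounded), and again the cutting rule turns the product of triangle-counts into an upper bound for $Z$. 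Combining the two bounds, sending $N\to\infty$, then $\ell\to 0$, and finally $\delta\to 0$ gives the stated $o_\delta(1)$ equality.

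The main obstacle is the boundary bookkeeping introduced by passing to $\mathcal{D}(\Omega)$: one must check that the sum over admissible height configurations along both the interior mesh edges \emph{and} the $2g$ cut curves $\upsilon_i$ genuinely contributes only $o(N^2)$ to the log-count, uniformly as $\ell\to0$, and that the periodicity constraint across paired curves $\upsilon_i\sim\upsilon_i'$ does not spoil the triangle-by-triangle factorization (it does not, because it is a constraint on $O(N)$ boundary values, hence costs at most $\exp(O(N\log N))$, but this needs to be stated carefully and combined with the fact from the \hyperref[concave]{discussion of approximations} that the relevant near-$\mathfrak{h}$ boundary heights along the cuts are themselves pinned to within $O(1)$ of $N\mathfrak{h}_\ell$). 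A secondary technical point is ensuring the exceptional triangles — those where $\nabla\mathfrak{h}\in\partial\mathcal{N}$ or where the piecewise-linear fit fails — are handled by crude bounds ($\sigma$ bounded on $\mathcal{N}$, number of tilings of a region of area $a N^2$ at most $\exp(C a N^2)$) whose total contribution is $O(\varepsilon)$ and hence disappears in the final limit; this is exactly the mechanism of \cite{CKP} and \hyperref[analitic1]{Claim \ref{analitic1}} is designed to supply it, so I expect only routine adaptation there.
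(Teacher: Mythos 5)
Your proposal is correct and follows essentially the same route as the paper: pass to the fundamental domain, triangulate with an $\ell$-mesh and use the piecewise-linear approximation of Claim \ref{analitic1}, apply the cutting rule (\ref{Cutting_boundary}) together with the local counting estimate (Corollary 4.2 of \cite{CKP}) on the good triangles, and absorb the cut curves $\upsilon_i$, the exceptional triangles, and the sub-exponential sum over boundary heights into error terms that vanish after normalizing by $N^{-2}$. The only difference is organizational — you treat the matching across the cuts as a separate preliminary reduction, whereas the paper folds the triangles meeting $\pp\mathcal{D}(\Omega)$ into the ``excluded'' class handled by the same rough bound.
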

\begin{proof}
Fix a fundamental domain $\mathcal{D}(\Omega)$ with branch-cuts made along curves $\{\gamma_i\}_{i=1}^g$.

For the proof we need a triangular mesh a side length $\ell$ and piecewise linear approximations of Lipschitz functions from \hyperref[analitic1]{Claim \ref{analitic1}}. 
Consider a triangular mesh of length size $\ell$ that triangulates $\mathcal{D}(\Omega)$ into triangles $T^j$ of the standard area $\mathcal{A}(T^j)$. Let also $\mathfrak{h}_\ell$ be the piecewise linear approximation of $\mathfrak{h}$ that is linear on each triangle and coincides with the values of $h$ at the vertices of the triangles.

Then, we choose small $\varepsilon$ and take $\ell$ such that $\ell\varepsilon<\delta$, and on at least at $1-\epsilon$ fraction of points of a triangle in measure we have two properties: first, 
$|\mathfrak{h}_\ell-\mathfrak{h}|\leq \ell\varepsilon$, and second, the gradient of $\mathfrak{h}$ exists and fit within $\varepsilon$ to the gradient of $\mathfrak{h}_\ell$ in $\ell_2$ norm, which is possible by Lemma 2.2 \cite{CKP}. We need these properties in order to write $\mathcal{F}(\mathfrak{h}_\ell)=\mathcal{F}(\mathfrak{h})+o_\delta (1)$ as $\varepsilon\to 0.$

Let us use the $\hyperref[Cutting_boundary]{(\ref{Cutting_boundary})}$ for a subset $\rho$ obtained from the intersection of the triangular mesh with the domain. Note that the subset $\rho$ cuts $\Gamma_N$ into triangles with boundary conditions along $\rho$. Denote the triangles by $\{T^{j}\}$ and their boundary height functions by $\{B_{\rho}^{j}\}$.
\bb
Z(\Gamma_N,B_N|\mathfrak{h}, \delta)=\sum_{\mathfrak{b}_\rho}{Z(\Gamma_N,B_{\rho}|\mathfrak{h}, \delta)}
=\sum_{\mathfrak{b}_\rho} \prod_{j}{Z(T^{j},B_{\rho}^{j}| \mathfrak{h}, \delta)},
\label{cutting_sum}
\ee

There are two types of triangles $\{T^j\}$. The included triangles (the first type) that do not intersect the boundary of the fundamental domain and where $\mathfrak{h}_\ell$ is fit to within $\ell\varepsilon$ to $\mathfrak{h}$. The excluded triangles (the second type) intersect the boundary of the fundamental domain or where $\mathfrak{h}_\ell$ does not approximate $\mathfrak{h}$.

We make an upper and a lower bound for the normalized partition function \newline $N^{-2}{\log Z(\Gamma_N,B_N|\mathfrak{h},\delta)}$. In both cases, we can estimate the two types of triangles separately. For the included triangles we use Corollary 4.2 from \cite{CKP}, and for the excluded we make a rough estimate. Then, after taking limit as $N\to\infty$, the normalized estimates differ from each other by $o_\delta(1)$.

\subsection{The lower bound.}

In the lower bound, it is sufficient to include some height functions that are $\delta$-close to $\mathfrak{h}$. 
To do this, we can take only one term from \hyperref[cutting_sum]{(\ref{cutting_sum})} corresponding to one boundary height function $B_\rho$ (for instance, we can take $B_\rho$ obtained from the restriction of $\hn^{\prime}$) (\hyperref[density]{Theorem~\ref{density}} gives us a sequence of normalized height functions $\{\hn^{\prime} \}$ that converges to $\mathfrak{h}$ as $N\to \infty$).

Let us estimate the triangles of the first type by the product that includes only triangles of this type.

\bb
Z(\Gamma_N,B_N|\mathfrak{h},\delta)\geq \prod_{T_j \text{of 1st type }}{Z(T^{j},B_{\rho}^{j}| \mathfrak{h}_\ell, \delta)}
\ee

The bound for the included triangles obtained by using Corollary 4.2 \cite{CKP} to count $\delta$-close height functions to make sure that we include only height functions $\delta$-close to $h$. Thus, for triangles of the first type, we have the following,

\bb
N^{-2}\log \prod_{T_j \text{of 1st type }} Z(T^{j},B_{\rho}^{j}|\mathfrak{h}_\ell,\delta)=\sum_{T_j \text{of 1st type }}\sigma(s^j,t^j) \times \cA(T^{j})+o(N^{-1})+O(\varepsilon^{1/2}\log{\varepsilon}),
\ee

where $(s^j,t^j)$ is a slope of $\mathfrak{h}_{\ell}$ on the triangle $T^j$ and $\cA(T^{j})$ is the area of the triangle $T^j$. 

Finally, for sufficiently large, $N$ the lower bound is the following, 

\bb
N^{-2}\log Z(\Gamma_N,B_N|\mathfrak{h},\delta)\geq \sum_{j} \sigma(s^j,t^j) \cA(T^{j})+o(N^{-1})+O(\varepsilon^{1/2}\log{\varepsilon}),
\ee
where we fixed a height function on the excluded triangles to be $\hn^{\prime}$.

\subsection{The upper bound.}

We can use almost the same strategy to make an upper bound. First, we have to include all height function $\delta$-close to $\mathfrak{h}$. Let us estimate the included triangles the same way as for the lower bound to count height functions $\delta$-close to $\mathfrak{h}$. For the excluded triangles we make a rough estimate since the area of those triangles is proportional to $\varepsilon$, the number of configurations is bounded from above by $\exp(\varepsilon)$ times the number of terms in the cutting rule, which is $2^{O(N)}$ since it is the number of configurations of a line of size $N$.

\bb
Z(\Gamma,B_N|\mathfrak{h},\delta)\leq \prod_{T_j \text{of 1st type }}{Z(T^{j},B_{\rho}^{j}|\mathfrak{h}_\ell,\delta))}2^{O(N)}+N^2\varepsilon
\ee
And after taking limit as $N\to\infty$, the normalized upper bound is the following, where terms $O(N^{-1})$ and $O(\varepsilon)$ come from the bounds of triangles of the second type.

\bb
N^{-2}\log Z(\Gamma,B_N|\mathfrak{h},\delta)\leq\sum_{j} \sigma(s^j,t^j) \cA(T^{j})+o(N^{-1})+O(\varepsilon^{1/2}\log{\varepsilon})+O(\varepsilon).
\ee

Taking into account that $\int_{\Omega}{\sigma(\nabla h_{\ell}) dx dy}=\sum_{j}{\sigma(s_j,t_j)}\mathcal{A}(t^j)$, one can see that both bounds after dividing by the area of $\Omega$ are equal to $\mathcal{F}(h_{\ell})+o(N^{-1})+O(\varepsilon^{1/2}\log{\varepsilon})$ that differs from $\mathcal{F}(\mathfrak{h})$ by $o_\delta(1)$ by Lemma 2.3 from \cite{CKP}.
\end{proof}

\newpage

\section{Final remarks}
\label{sect9}
\subsection{}

\label{results}

As mentioned in \cite{VG}, there are two approaches in studying random domino tilings of a multiply-connected domain $\Gamma$ that are equivalent for a simply-connected domain.

The first approach is looking at domino tilings of $\Gamma$ with the uniform measure $\mathbb{P}$ defined on them. In this framework, one might be interested in fluctuations either of a height function or a height change. For instance, in \cite{BGG} the authors showed Gaussian fluctuation of normalized height change $\frac{1}{N}R_N$ using the method of log-gaze.

The second option is fixing a boundary height function $B^R$ with the height change $R$ and looking at uniformly random height functions that extend $B^R$ to $\Gamma$. Denote $\mathbb{P}_N^R$ the uniform measure on such extensions, which is just $\mathbb{P}_N$ conditioned to have the fixed height change $R$. This approach suits a random surface point of view on tilings, where we look at a plot of a height function as a random stepped surface. Computer simulation of domino tilings with different height change in \hyperref[simul]{\ref{simul}} shows that this parameter is extremely important.
Results in this direction include the first description of a non-simply-connected domain in \cite{BG}. The authors proved a law of large numbers and a central limit theorem for domino tilings of so-called holey Aztec diamond. Up to our knowledge \cite{BG} is the only work that deals with multivalued height functions, yet the authors do not find $\mathfrak{h}^{\star}$ explicitly or characterize it besides the law of large numbers. Other works focused on a problem of random lozenge tilings of multiply-connected domains with monodromy-free height functions. In \cite{KO} the analysis using the complex Burgers equation was done with an example of a frozen(Arctic) curve in a non-simply-connected region. In recent years there also appeared combinatorial works with enumerating results by M. Ciucu et al. for example see \cite{CL}. Further results are obtained using the tangent method by P.Di Francesco et al. in \cite{DFG}, where the authors have found the frozen curve for quarter-turn symmetric domino tilings of a holey Aztec square, which is a multiply-connected domain with a hole of a finite size. 

Also, the idea of defining a height function on $\Tilde{\Omega}$ in a different notation is already known, for instance \cite{BLR}.

\subsection{}
The case of a hole of a finite size, as in \cite{DFG}, can be, probably, analyzed in our framework as follows. Since the hole converges with respect to Hausdorff distance to a point $(x_0,y_0)$ as $N\to\infty$, we left with one parameter that
encodes the height of $(x_0,y_0)$. So, we need to modify the space of function by fixing the value of height functions at $(x_0,y_0)$. As the result, one would expect conic singularities of the limit shape as it is approaching the point $(x_0,y_0)$, as in a similar example noted in \cite{KO}.

\subsection{}
Let us recall that a flip of a domino tiling is a replacement of two adjacent vertical dominoes by two adjacent horizontal dominoes. The property of domino tilings of a simply-connected region $\Gamma$ is that any two domino tilings are related by a sequence of flips \cite{T,STCR}. In other words, the set of domino tilings forms one orbit under the action of flips. This property is in the core of computer simulations of random domino tilings \cite{PW}. Thus, the simulation algorithm of uniformly random domino tilings should include an extra move that change $R$. This move is a cyclic rotation of dominoes along the non-trivial loop. See details in \cite{DFG}.

\subsection{}
Let us define our main example, the modified Aztec diamond $\mathcal{AD}_{N}$. Also, we explain certain features of $\mathcal{AD}_{N}$ and possible ways to analyzing it.
\begin{figure}
    \includegraphics[width=0.3\linewidth]{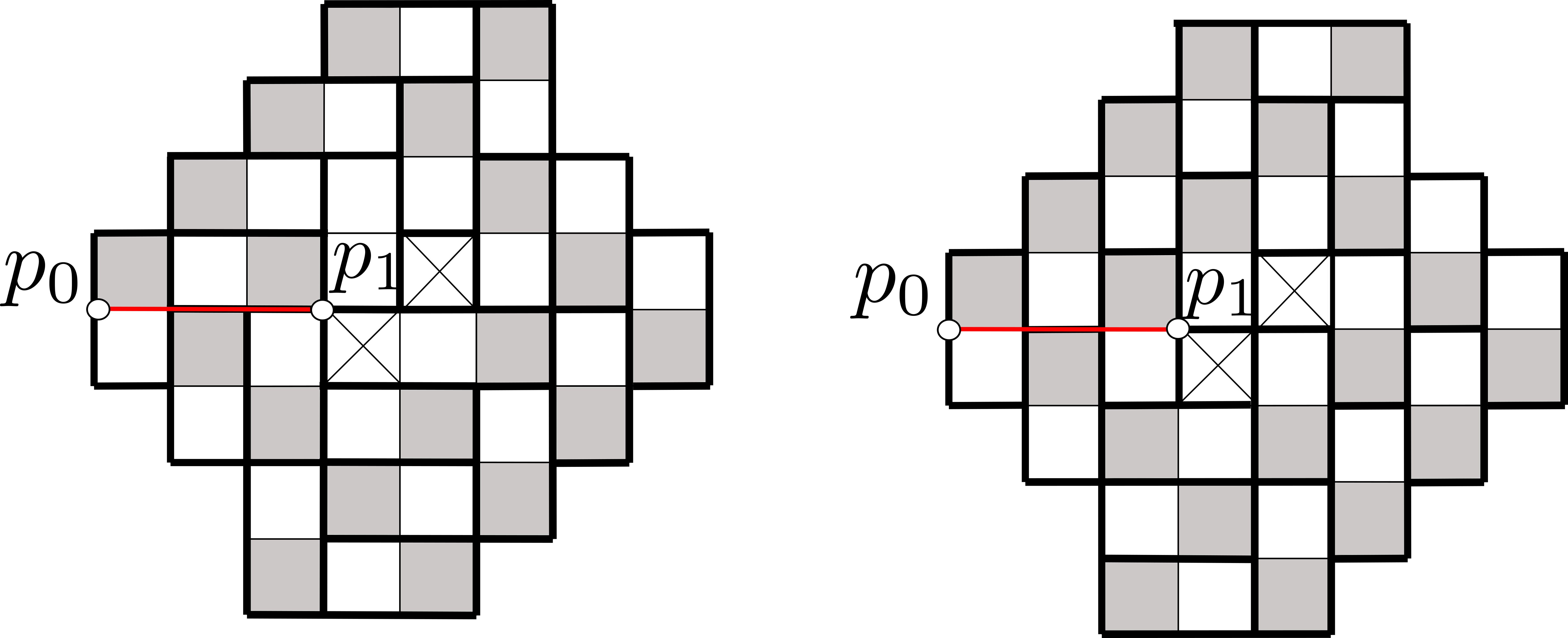}
    \caption{Two domino tilings of $\mathcal{AD}_{1}$ with height change $R_1=3$ on the right figure and $R_1=7$ on the left. The crossed squares are missing from the domain.}
    \label{island}  
\end{figure}

\label{modi}
Recall that the Aztec diamond of order $N$ is the union of unit squares on $\ZZ^2$ whose centers $(x,y)$ satisfy $|x|+|y|\leq N$. Let $N=4k, k\in\mathbb{N}$ and introduce the Aztec diamond with a modified constraint $AD^{\circ}_N$, $N/4 \leq|x|+|y|\leq N$. The boundary of $AD^{\circ}_N$ consists of two connected components, the external boundary and the internal one. For our main example we make four defects to the latter boundaries, that is consider $AD^{\circ}_N$ and add $N/4$ squares in the following four locations, right upper and left bottom external boundaries(resp. left upper and right bottom internal boundaries).  See an example of $\mathcal{AD}_{1}$ on \hyperref[island]{Figure~ (\ref{island})}. A height function on this domain has monodromy $M=8$. It is not hard to check using a checkerboard coloring that $\mathcal{AD}_{N}$ is tillable for arbitrary $N=4k, k\in\mathbb{N}$.

One interesting property of $\mathcal{AD}_{N}$ is an emergence of two paths on the top and on the bottom of it that can be clearly seen on \hyperref[fancy]{Figure~ (\ref{fancy})}. These paths exist in all the domino tilings of $\mathcal{AD}_{N}$, which can be seen from the parametrization of domino tilings by non-intersections paths via bijection with non-intersecting line ensemble as in \hyperref[NILP]{Figure~ \ref{NILP}}.

Recall that a frozen region is the set of points of $\Omega$ where fluctuation of $H_N$ disappears as $N\to\infty$, the boundary of the frozen region is called a frozen(arctic) curve. The paths mentioned above approximate the tangent lines to the arctic curve. This property is in the core of the heuristics of the tangent method \cite{CS}, which reconstructs the arctic curve from its tangent lines. Recently, this method was proved for a particular case of the six-vertex model, the ice-model on a three-bundle domain \cite{AG2}. We think that this technique can be used to determine the Arctic curve in our situation. We leave it for future work.
\begin{figure}
    \centering
    \includegraphics[width=0.3\linewidth]{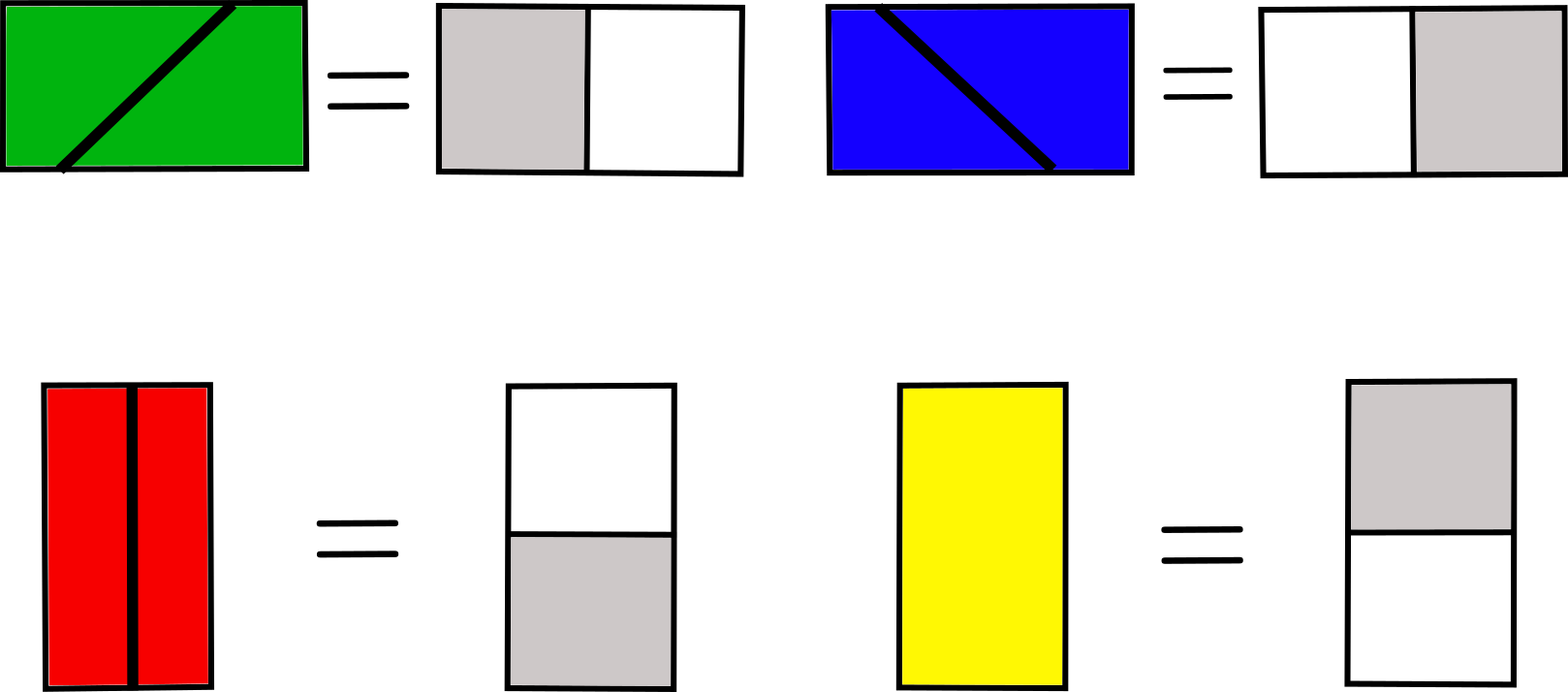}
    \caption{Bijection between domino tilings and non-intersecting line paths}
    \label{NILP}
\end{figure}
More interestingly, one can modify the definition of $\mathcal{AD}_{N}$ by changing the size of the defects and obtain a one-parameter family of domains and frozen curves. See \hyperref[simul]{Section~\ref{simul}} for simulations of $\mathcal{AD}_{N,M}$ with different defects.

\addcontentsline{toc}{section}{References}
\bibliographystyle{alpha}
\bibliography{main}

\subsection{Computer simulations of the modified Aztec diamond.}
Here, we present simulations of random domino tilings of $\mathcal{AD}_{N}$ made for a randomly-chosen height change $r$.

\label{simul}
\begin{figure}[ht!]
    \includegraphics[width=0.5\linewidth]{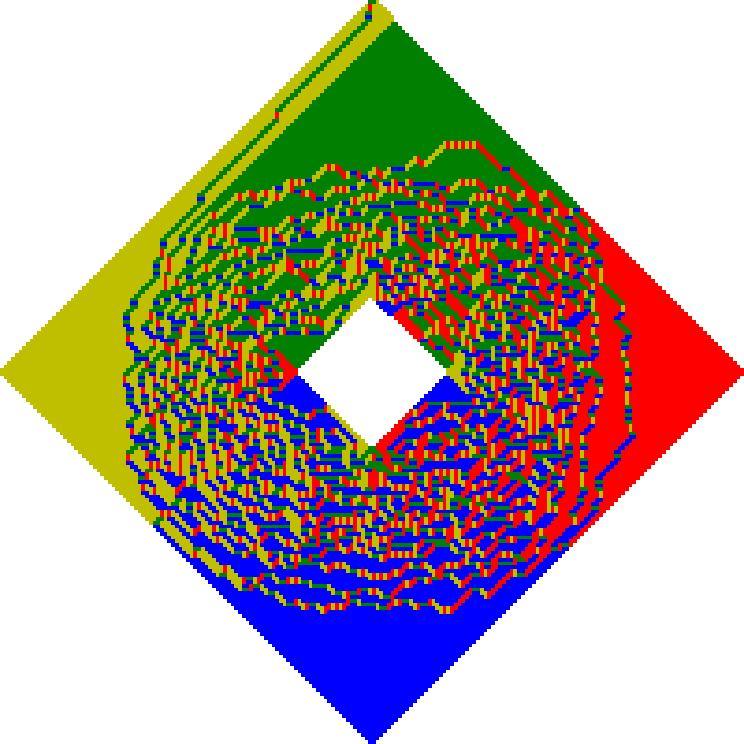}
    \caption{A domino tiling of $\mathcal{AD}_{50}$ with $M=50$ and $R=88$.}
\end{figure}
\begin{figure}[ht!]
    \includegraphics[width=0.5\linewidth]{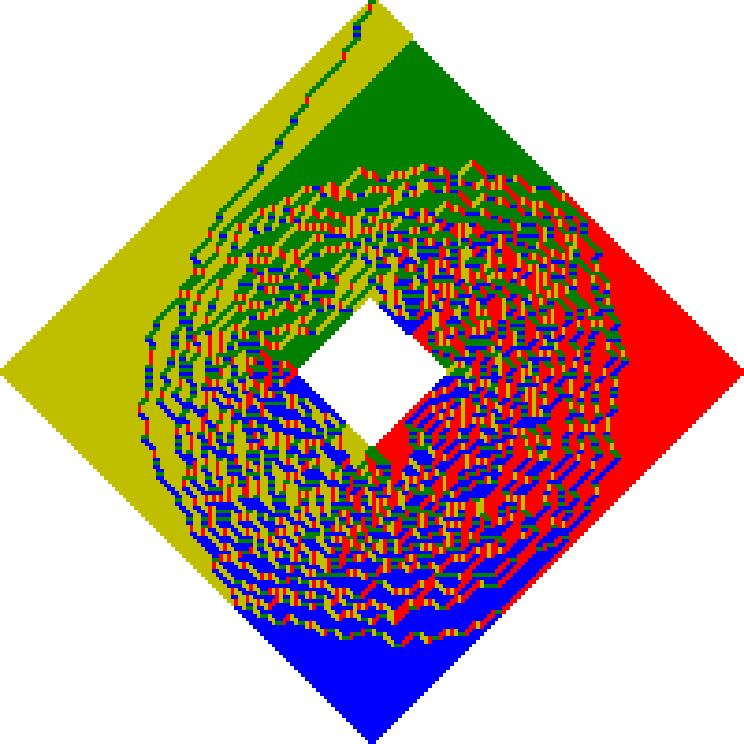}
    \caption{A domino tiling of $\mathcal{AD}_{50}$ with $M=100$ and $R=72$ }
\end{figure}
\pagebreak

\begin{figure}[ht!]
    \includegraphics[width=0.5\linewidth]{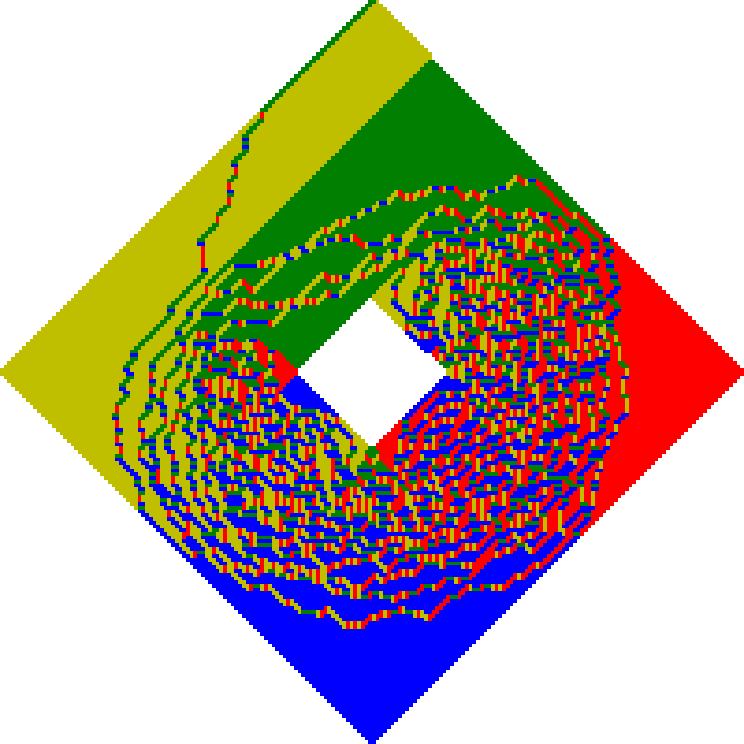}
    \caption{A domino tiling of $\mathcal{AD}_{50}$ with $M=150$ and $R=76$} 
\end{figure}
\begin{figure}[ht!]
    \includegraphics[width=0.5\linewidth]{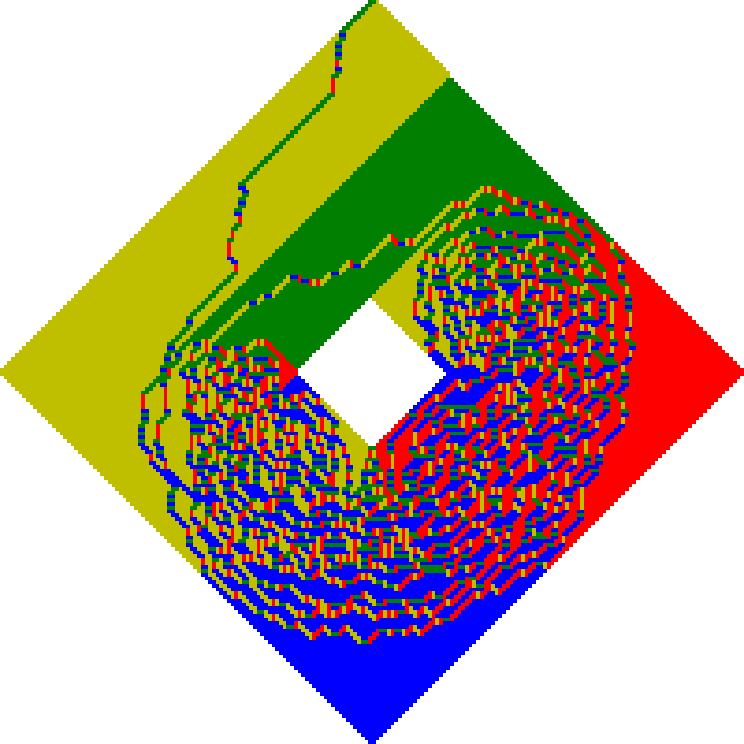}
    \caption{A domino tiling of $\mathcal{AD}_{50}$ with $M=200$ and $R=88$}
\end{figure}
\pagebreak

\begin{figure}[ht!]
    \centering
    \includegraphics[width=0.45\linewidth]{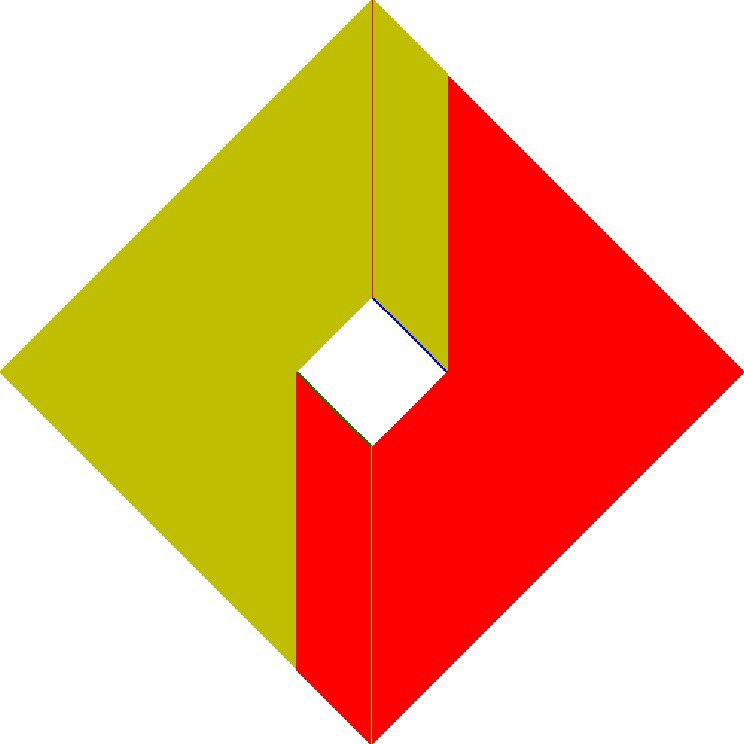}
    \caption{A domino tiling with the minimal height change $R=-300$. Almost all the dominoes are vertical.}
    \label{figure6}
\end{figure}
\begin{figure}[ht!]
    \centering
    \includegraphics[width=0.45\linewidth]{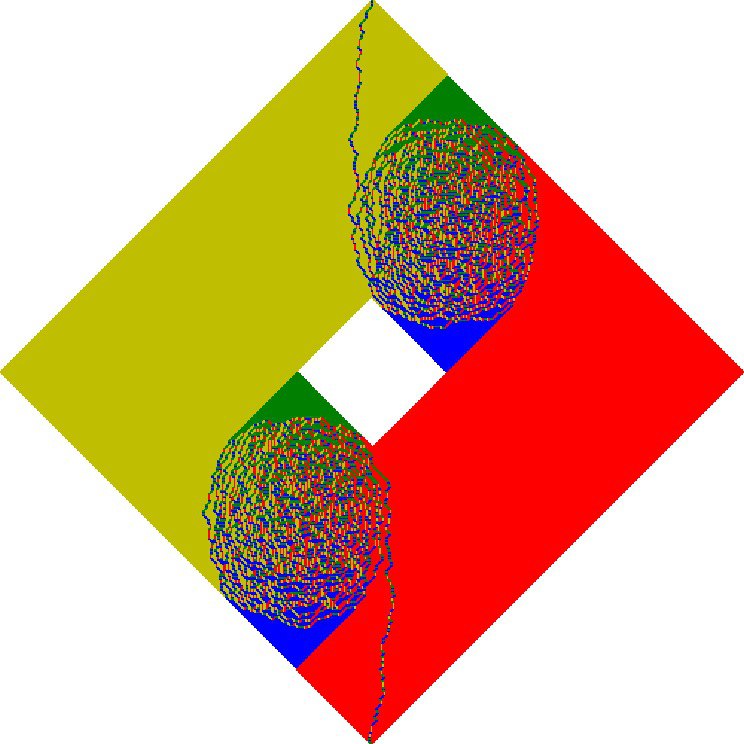}
    \caption{A typical domino tiling with the minimal height change $R=-300$.}
    \label{figure5}
\end{figure}

\end{document}